\newcounter{dccomment}
\newsavebox{\largestimage}
\DeclareMathOperator{\Ch}{Ch}
\DeclareMathOperator{\kernel}{Ker}
\DeclareMathOperator{\Id}{Id}
\DeclareMathOperator{\proj}{proj} 
\DeclareMathOperator{\Sh}{Sh} 
\newcommand{\1}{\mathbbm{1}} 
\newcommand{\ar}{\mathrm{ar}}
\newcommand{\D}{\mathbb{D}} 
\newcommand{\fol}{\mathcal{F}} 
\newcommand{\G}{\mathcal{G}} 
\newcommand{\Hor}{\mathbb{H}} 
\newcommand{\hor}{\mathcal{H}}
\newcommand{\II}{\mathrm{II}} 
\newcommand{\Lie}{\mathcal{L}} 
\newcommand{\m}{\mathfrak{m}} 
\newcommand{\ob}{\mathrm{ob}}
\newcommand{\Or}{\mathcal{O}}
\newcommand{\R}{\mathbb{R}} 
\newcommand{\Sp}{\mathbf{S}} 
\newcommand{\Sec}{\mathrm{Sec}}
\newcommand{\V}{\mathbb{V}} 
\newcommand{\ver}{\mathcal{V}}
\newtheorem{theorem}{Theorem}  
	\newtheorem{thm}{Theorem}[section]
	\newtheorem{lemma}[thm]{Lemma}
	\newtheorem{cor}[thm]{Corollary}
	\newtheorem{proposition}[thm]{Proposition}
\theoremstyle{definition}	
	\newtheorem{remark}[thm]{Remark}
	\newtheorem{definition}[thm]{Definition}
	\newtheorem{example}[thm]{Example}
 \newtheoremstyle{TheoremNum}
        {\topsep}{\topsep}              
        {\itshape}                      
        {}                              
        {\bfseries}                     
        {.}                             
        { }                             
        {\thmname{#1}\thmnote{ \bfseries #3}}
    \theoremstyle{TheoremNum}
\title[Cheeger Deformations for Lie groupoid actions]{Cheeger Deformations for Lie groupoid actions}
\subjclass[2020]{53C12, 53C20, 53C21, 53C23, 58H05, 58H15, 22A22, 57R30, 32G99}
\keywords{Lie groupoids, Cheeger deformation, singular Riemannian foliation}
\author[D.~Corro]{Diego Corro$^{\ast}$}
\address[D.~CORRO]{Fakultät für Mathematik\\ Karlsruher Institut für Tech\-no\-lo\-gie, Karlsruhe, Deutschland.}
\curraddr{School of Mathematics, Cardiff University, United Kingdom}
\urladdr{\url{www.diegocorro.com}}
\email{\href{mailto:diego.corro.math@gmail.com}{diego.corro.math@gmail.com}}
\thanks{$^\ast$Supported  DFG-Eigenestelle Fellowship CO 2359/1-1, and a UKRI Future Leaders
Fellowship [grant number MR/W01176X/1; PI: J Harvey].}
\begin{document}

\overfullrule = 5pt

\begin{abstract}
We give an extension of Cheeger's deformation techniques for smo\-oth Lie group actions on manifolds  to the setting of singular Riemannian foliations induced by Lie groupo\-ids actions. We give an explicit description of the sectional curvature of our generalized Cheeger deformation.	
\end{abstract}

\maketitle

\section{Introduction}


Given a  Riemannian manifold $(M,g)$ and an effective action by isometries of a Lie group $G$ on $M$, in in \cite{Cheeger1973} Cheeger presented a general deformation procedure with the aim of generalizing Berger's example of collapsing the unit round sphere $\Sp^3$ to a $2$-sphere $\Sp^2$ of radius $1/2$. The basic idea is to ``shrink'' the length of the vectors tangent to the orbits, while keeping the length of the vectors normal to the orbits unchanged (see  \cite{MoullieWebpage} for a nice visual explanation). This deformation procedure has been used to produce metrics with positive lower sectional or Ricci curvature bounds (for example \cite{LawsonYau1972}, \cite{SearleSolorzanoWilhelm2015}, \cite{Searle2023}, \cite{CavenaghiESilvaSperanca2023}).

Singular Riemannian foliations have been considered as a notion of symmetry since Lie group actions by isometries, as well as Riemannian submersions, are two large families that give examples of singular Riemannian foliations \cite{Alexandrino}, \cite{Corro2019}, \cite{CorroMoreno2020},  \cite{GalazGarcia2015}, \cite{GeRadeschi2013}, \cite{Molino}, \cite{Moreno2019}. But by \cite{Radeschi2014} there exist singular Riemannian foliations which are not induced by Riemannian submersions nor Lie group actions. 

In order to apply a Cheeger deformation, we need to know not only the equidistant decomposition of the manifold by the orbits of the action, but also the action itself. In the present work we define an extension of the Cheeger deformation procedure for singular Riemannian foliations induced by (proper) Lie groupoid actions. Lie groupoids have strong connections to regular foliations, e.g. \cite{Moerdijk}, \cite{Ehresmann1965}, \cite{Winkelnkemper1983}, and also more generally to singular  foliations in some cases, e.g. \cite{Debord2001}, \cite{AndroulidakisSkandalis2009}, and to closed singular Riemannian foliations, see \cite{AlexandrinoInagakiStruchiner2018}. It is of interest the question of whether it is possible if we can define a similar procedure to Cheeger deformations only knowing the geometric decomposition of the manifold, i.e. for a general singular Riemannian foliation. 

A Lie groupoid $\G\rightrightarrows M$  consists of a smooth manifold $\G$ of arrows,  a smooth manifold $M$ of points, source and target submersion maps $s,t\colon \G\to M$, and operations between the arrows analogous to the multiplication and inverse operations of a Lie group. Lie groupoids induce singular foliations on the base manifolds, where the leaves of the foliation $\fol_\G$ consists of the orbits $\G(p) = t(s^{-1}(p))\subset M$ of the groupoid.

Given a Lie groupoid $\G\rightrightarrows M$, $P$ a smooth manifold and $\alpha\colon P\to M$ a smooth map, we can define the notion of an action $(\G\rightrightarrows M)\overset{\mu}{\curvearrowright} (P\overset{\alpha}{\to} M)$ of the Lie groupoid $\G\rightrightarrows M$ on $P$ along $\alpha$. The Lie groupoid action consists of a new Lie groupoid $\G\times_M P\rightrightarrows P$ over $P$, where the space of arrows is the pullback of the submersion $s\colon \G\to M$ along $\alpha$. Thus a Lie groupoid action induces a singular foliation on $P$, where the leaves of the foliation are the orbits of the action groupoid. Moreover, when all the objects are compact, by \th\ref{T: existence of G->M invariant metrics on P->M} this singular foliation is a singular Riemannian foliation. 

In the present work we consider  proper Lie groupoid actions $\mu$ of $\G\rightrightarrows M$ along $\alpha\colon P\to M$, and show the existence of a family of Riemannian metrics $g_t$ on $P$ which are compatible with the orbits of the action, and such that the metrics ``shrink'' the orbits. Namely the foliation $(P,\fol,g_t)$ on $P$ is a singular Riemannian foliation (that is, the orbits are locally equidistant), where $\fol$ is the foliation induced by the orbits of the  action groupoid $\G\times_M P\rightrightarrows P$ induced by $\mu $ and $\alpha$.

\begin{theorem}\th\label{MT: deformation}
Let $\G\rightrightarrows M$ be a proper Lie groupoid acting along $\alpha\colon P\to M$ given by $\mu$, where $\G$, $M$ and $P$ are compact. Denote by $\fol$ the singular foliation on $P$ induced by the orbits of the action. Then for any $g$ transversely $\mu$-invariant Riemannian metric on $P$ (see \th\ref{D: mu-transversly invariant}) there exist a Riemannian metric $Q$ on $\G$ such that for the Riemannian metric $\widehat{g}_t = (\frac{1}{t}Q\oplus g)|_{\G\times_M P}$ we have a Riemannian submersion
\[
\bar{t}\colon (\G\times_M P,\widehat{g}_t)\to P.
\]
\end{theorem}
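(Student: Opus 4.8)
The plan is to realize the target map $\bar t$ as the base projection of the action groupoid $\G\times_M P\rightrightarrows P$, to build $Q$ by averaging against a Haar system, and to show that the resulting submersion metric on $P$ is forced to descend. First I would fix the underlying smooth structure: since $s$ is a submersion and $\G,M,P$ are compact, $\G\times_M P=\{(g,p):s(g)=\alpha(p)\}$ is an embedded submanifold of $\G\times P$, with $T_{(g,p)}(\G\times_M P)=\{(X,v)\in T_g\G\oplus T_pP : ds_g X=d\alpha_p v\}$, and $\widehat g_t$ is the restriction of $\tfrac1t Q\oplus g$ to it. In the action groupoid the source is $\bar s(g,p)=p$ and the target is $\bar t(g,p)=g\cdot p$; both are surjective submersions (surjectivity of $\bar t$ via the unit arrows $1_{\alpha(p)}$). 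It therefore suffices to equip $P$ with the submersion metric $g_t$ obtained by declaring $d\bar t$ to restrict to an isometry on the $\widehat g_t$-orthogonal complement $\hor_{\bar t}:=(\ver_{\bar t})^{\perp}$ of the vertical space $\ver_{\bar t}:=\ker d\bar t$, and then to verify that this $g_t$ is well defined along the fibres of $\bar t$.

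Second, I would construct $Q$. Using properness, I average an arbitrary metric on $\G$ against a Haar system with a cut-off function — the same mechanism underlying \th\ref{T: existence of G->M invariant metrics on P->M} — to obtain a Riemannian metric $Q$ compatible with the groupoid structure: $s$ and $t$ become Riemannian submersions onto a common base metric $g_M$ on $M$, the inversion of $\G$ is an isometry, and right translations $R_h\colon g\mapsto gh$ restrict to isometries between source fibres. I would moreover arrange $g_M$ to match the transverse metric that $g$ induces on $M$ through $\alpha$, so that $Q$ and the transversely $\mu$-invariant metric $g$ are coordinated along $\alpha$.

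Third comes the heart of the argument: basic-ness of $g_t$. The fibre $\bar t^{-1}(q)$ is exactly the $t$-fibre of the action groupoid through any $(g,p)$ with $g\cdot p=q$, and two of its points are related by a right translation $(g,p)\mapsto(gh,h^{-1}\cdot p)$, which preserves $\bar t$. I would show that this right translation is a $\widehat g_t$-isometry: on the source-vertical directions $X\in\ker ds_g$ it reduces to $R_h$ being a $Q$-isometry between source fibres (Step two), while the remaining directions are controlled by the transverse $\mu$-invariance of $g$, the two contributions being coupled through the tangency relation $ds_g X=d\alpha_p v$ cutting out $T(\G\times_M P)$. Invariance of $\widehat g_t$ under these right translations is equivalent to the horizontal metric being constant along $\bar t^{-1}(q)$, i.e. to $g_t$ being well defined, whence $\bar t$ is a Riemannian submersion onto $(P,g_t)$.

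The main obstacle is precisely this last step: the orbit-tangent directions in $P$ receive contributions from both the $\tfrac1t Q$ term and the $g$ term, and after restricting to $T(\G\times_M P)$ one must check that they recombine into a right-invariant expression. This is where properness — guaranteeing the averaged, right-invariant $Q$ with $s$ a Riemannian submersion — and the transverse $\mu$-invariance of $g$ enter in an essential and coordinated way; once invariance of $\widehat g_t$ under the right translations of the action groupoid is established, the Riemannian submersion property of $\bar t$ is formal.
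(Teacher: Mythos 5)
Your proposal transplants the classical Lie--group argument (realize $g_t$ as the quotient metric of $\widehat g_t$ under the fibrewise identifications of $\bar t$, and make those identifications isometries), and several of its ingredients are sound: the description of $\bar t^{-1}(q)$ and the transitivity of right translations on it, surjectivity of $\bar t$ via units, and the existence of a suitable $Q$ by Haar-system averaging (this is essentially the del Hoyo--Fernandes existence result the paper invokes). The genuine gap is in the step you yourself single out as the heart of the argument. Once $\dim M>0$, the right translation $(g,p)\mapsto(gh,\mu(i(h),p))$ by a \emph{fixed} arrow $h$ is defined only on $\{(g,p)\in\G\times_M P\mid \alpha(p)=t(h)\}$, i.e.\ on a single fibre of $\alpha\circ\bar s\colon \G\times_M P\to M$; this is a submanifold of positive codimension, not a local diffeomorphism of $\G\times_M P$. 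Hence ``$R_h$ is a $\widehat g_t$-isometry'' can only mean an isometry of the metrics induced on these submanifolds, whose tangent spaces consist of vectors $(X,v)$ with $X\in\kernel Ds$ and $v\in\kernel D\alpha$. But the $\bar t$-horizontal spaces $(\kernel D\bar t)^{\perp}$ are not tangent to these submanifolds: already at unit points, \th\ref{T: bar(t)-Horizontal distribution} exhibits horizontal vectors with nonzero component $\frac{1}{\varepsilon}\zeta$, $\zeta\in(\kernel D_{\1_{\alpha(p)}}s)^{\perp}$, which are transverse to the domain of $R_h$. So right translations simply do not act on the horizontal spaces, and your claimed equivalence ``invariance of $\widehat g_t$ under right translations $\Longleftrightarrow$ the horizontal metric is constant along $\bar t^{-1}(q)$'' is not a meaningful statement, let alone a provable one. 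This is precisely where the group case ($M=\{\ast\}$, right translations globally defined on $G\times P$) fails to generalize.

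The natural repair---extending right translations to genuine local diffeomorphisms of $\G\times_M P$ by choosing local bisections of $\G$ through $h$---makes matters worse: one would then need the bisection-induced maps to be isometries, i.e.\ full right-invariance of $Q$ and full (not merely transverse) $\mu$-invariance of $g$. The theorem's hypothesis supplies only transverse $\mu$-invariance, and full invariance is impossible in general: for the pair groupoid $M\times M\rightrightarrows M$ with $Q=\eta\oplus\eta$ (a case the theorem must cover, cf.\ \th\ref{Ex: Pair groupoid as a Riemannian groupoid}), bisections correspond to local diffeomorphisms of $M$, which cannot all act isometrically. This is exactly why the paper does not argue by equivariance at all. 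In \th\ref{T: contraction of groupoid induces cheeger deformation} one takes $Q=\eta^{(1)}$ to be a $1$-metric induced by a $2$-metric (existence from properness, \th\ref{P: existence of 2-metrics}) and verifies the infinitesimal projectability criterion of \cite[Theorem~1.2.1]{GromollWalschap}: for every $\bar t$-vertical vector, the Lie derivative of the horizontal part of $\widehat\eta_\varepsilon$ vanishes, because $t\colon(\G,\eta^{(1)})\to M$ is a Riemannian submersion (handling the $\G$-directions) and the orbits in $(P,\eta^P)$ are locally equidistant (handling the $P$-directions---this is exactly where transverse $\mu$-invariance enters). That argument consumes only transverse data, which is all the hypotheses provide; yours requires invariance along the orbits, which they do not.
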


Thus the Riemannian metrics $\widehat{g}_t$ induce Riemannian metrics $g_t$ on $P$, making $\fol$ into a singular Riemannian foliation with respect to $g_t$ (see \cite{delHoyoFernandes2018}). Moreover, we show that we have a controlled description of the sectional curvature $K(g_t)$  of this family of metrics. Namely, applying O'Neill's formulas \cite{ONeill1966} we obtain the following result.

\begin{theorem}\th\label{MT: curvature of deformation}
Let $\G\rightrightarrows M$ be a Lie groupoid acting along $\alpha\colon P\to M$, where $\G$, $M$ and $P$ are compact. Denote by $\fol$ the singular foliation on $P$ induced by the orbits of the action. Then there exist Riemannian metrics $Q$ on $\G$, and $g$ on $P$ for which there exists a family of Riemannian metrics $\{g_t\}_{t\in (0,\infty)}$  on $P$ such that $(P,g_t)$ converges in the Gromov-Hausdorff sense to $(P/\fol,d_g^\ast)$ as $t\to \infty$, and 
\begin{linenomath}
\begin{align*}
K(g_t) = K(\widehat{g}_t)+3\|A(\cdot,\cdot)\|^2.
\end{align*}
\end{linenomath}
Here, $A$ denotes the the $A$-tensor of the submersion $\bar{t}\colon \G\times_M P\to P$ (\,see \cite{GromollWalschap}).
\end{theorem}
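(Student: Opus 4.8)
The plan is to read the curvature identity directly off O'Neill's formulas for the Riemannian submersion supplied by \th\ref{MT: deformation}, and then to establish the Gromov--Hausdorff convergence by a collapse argument in which the $\G$-directions shrink to zero. To begin, I would fix a transversely $\mu$-invariant metric $g$ on $P$ and invoke \th\ref{MT: deformation} to produce the metric $Q$ on $\G$ together with the Riemannian submersion $\bar{t}\colon(\G\times_M P,\widehat{g}_t)\to P$. The submersion property determines a unique metric $g_t$ on the base $P$, characterized by the requirement that $d\bar{t}$ restrict to a linear isometry from the horizontal distribution of $\widehat{g}_t$ onto $(TP,g_t)$; this $g_t$ is, by construction, the Cheeger-deformed family of metrics. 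I would record at this stage that the vertical distribution of $\bar{t}$ is tangent to the $\G$-orbits, so that the collapsing factor $\tfrac{1}{t}Q$ shrinks exactly the orbit directions while leaving the transverse geometry governed by $g$.

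For the curvature identity I would apply O'Neill's formula \cite{ONeill1966}. For $\widehat{g}_t$-orthonormal horizontal vectors $X,Y$ on $\G\times_M P$, O'Neill's curvature equation reads
\[
K(g_t)\bigl(d\bar{t}(X),d\bar{t}(Y)\bigr)=K(\widehat{g}_t)(X,Y)+3\,\|A_X Y\|^2,
\]
where $A$ is the $A$-tensor of $\bar{t}$. Since $d\bar{t}$ carries horizontal orthonormal frames to $g_t$-orthonormal frames on $P$, ranging over all horizontal planes yields precisely
\[
K(g_t)=K(\widehat{g}_t)+3\,\|A(\cdot,\cdot)\|^2,
\]
the asserted relation. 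The only point requiring care is that the horizontal lifts and the $A$-tensor be well defined across the various strata of $\fol$, which is guaranteed by the submersion established in \th\ref{MT: deformation}.

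For the Gromov--Hausdorff convergence I would analyze the distance $d_{g_t}$ as $t\to\infty$. Let $\pi\colon P\to P/\fol$ be the leaf-space projection. Because $g_t$ agrees with $g$ on directions transverse to the orbits and shrinks the orbit-tangent directions by a factor controlled by $\tfrac{1}{t}Q$, the map $\pi$ is distance non-increasing from $(P,g_t)$ onto $(P/\fol,d_g^*)$ and preserves transverse lengths; hence
\[
d_{GH}\bigl((P,g_t),(P/\fol,d_g^*)\bigr)\le \sup_{p\in P}\diam_{g_t}\bigl(\fol(p)\bigr).
\]
Using the compactness of $\G$, $M$ and $P$, I would then show that the orbit diameters on the right tend to $0$ uniformly as $t\to\infty$, which yields the convergence.

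The main obstacle is precisely this last uniform collapse of the orbit diameters in the singular setting: the leaves of $\fol$ vary in dimension across strata, so the rate at which an orbit shrinks under $g_t$ is not uniform a priori, and one must control the geometry near the lower-dimensional (singular) strata to extract a uniform bound. The compactness hypotheses are essential here, both to obtain uniform bounds on the orbit tensors and to guarantee that the limiting pseudo-distance $d_g^*$ is a genuine metric on the leaf space $P/\fol$.
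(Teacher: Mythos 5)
Your proposal is correct and takes essentially the same route as the paper: the curvature identity is read off from O'Neill's formula for the Riemannian submersion $\bar{t}\colon(\G\times_M P,\widehat{g}_t)\to P$ supplied by \th\ref{MT: deformation}, and the Gromov--Hausdorff convergence is the same collapse argument (orbit-tangent lengths shrink to zero, transverse lengths and hence leaf distances are preserved, compactness gives uniformity) that the paper carries out in \th\ref{T: Cheeger deformation collapses} and \th\ref{T: Proof of theorem B}. The only difference is that the paper additionally expands $K(\widehat{g}_t)$ via the Gauss equation for the embedding $\G\times_M P\subset\G\times P$, a refinement not required by the statement as given.
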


We point that applying the Gauss-Codazzi equations, we  can also compute the sectional curvature of $\widehat{g}_t$ via the embedding $(\G\times_M P,\widehat{g}_t)\hookrightarrow (\G\times P,(1/t)Q\oplus g)$  (see \th\ref{T: Proof of theorem B} for a full description of the sectional curvature of $g_t$).

This construction is analogous to the deformation procedure developed by Che\-eger for smooth compact Lie group actions on compact manifolds, first introduced in \cite{Cheeger1973}.

We also point out that when we consider a compact Lie group $\G = G\rightrightarrows \ast$ as a Lie groupoid and we consider the trivial map $\alpha\colon M\to \ast$, then a Lie groupoid action $(\G\rightrightarrows \ast) \curvearrowright (M\to \ast)$ is a smooth Lie group action $\G\times M\to M$. In this case we  have that $\G\times_{\ast} M = G\times M$, and we may take the metric $Q$ to be a bi-invariant Riemannian metric on $G$, and $g$ any $G$-invariant metric on $M$. In particular, since $G\times_\ast M = G\times M$, this implies that it is not necessary to compute the second fundamental form in \th\ref{MT: curvature of deformation}. In this case $K(\widehat{g}_t) = t^3K(Q)+K(g)$. Thus from, \th\ref{MT: deformation} and \th\ref{MT: curvature of deformation}, for the choices of $Q$ and $g$, we recover the classical construction of Cheeger deformations for group actions (see \cite{Mueter} and compare \th\ref{T: Classical cheeger deformation classical} to \th\ref{T: Proof of theorem B}). We also remark  that in the case  of a bi-invariant metric, the term $K(Q)$ corresponds to the sectional curvature of a bi-invariant metric, and thus for Lie group actions this deformation procedure is curvature non-decreasing. Nonetheless, in our construction we can consider a Riemannian metric $Q$ on $G$ which is not necessarily a bi-invariant metric; see \th\ref{R: 2-metrics on Lie groups} and \th\ref{T: Proof of theorem B} for example. 

We indicate that for a general Lie groupoid $\G$, the metric $Q$ in \th\ref{MT: deformation} is a so called $1$-metric given by a so called $2$-metric (see Section~\ref{S: Riemannian Lie groupoids}). From this fact, we point out that when when we consider $\G\rightrightarrows M$  to be the pair groupoid $M\times M\rightrightarrows M$, then for any choice of $\eta$ a Riemannian metric on $M$, the product metric $Q=\eta+\eta$ on $M\times M$ is a $1$-metric induced by a $2$-metric (see \th\ref{Ex: Pair groupoid as a Riemannian groupoid}). Thus, for a general groupoid, we cannot expect the term $K(Q)$ to be non-negative. Nonetheless, by \th\ref{T: Proof of theorem B} we see that for fixed tangent vectors in $P$, for sufficiently small $t$ the term $K(Q)$ becomes the dominating factor for determining the sectional curvature of the deformation $K(g_t)$.

As mentioned before, Cheeger deformations for Lie group actions have been applied in \cite{LawsonYau1972} to prove that a smooth action of a non-abelian compact Lie group  on a smooth manifold guarantees positive scalar curvature on a compact manifold. Moreover in \cite{SearleWilhelm2015} Cheeger deformations have been applied to lift curvature bounds from the orbit space $M/G$ of a compact Lie group action by isometries on a compact Riemannian manifold $(M,\eta)$. Also in \cite{CavenaghiESilvaSperanca2023} the authors give a proof of one of the main results in \cite{GroveZiller2002} using Cheeger deformations: given a compact Lie group $G$ acting smoothly on a compact smooth manifold $M$ such that $M/G$ homeomorphic to $[-1,1]$ and a principal orbit has finite fundamental group, then $M$ admits a $G$-invariant Riemannian metric with positive Ricci curvature. In all of these procedures it is crucially used the fact that $K(Q)$ is a non-negative or positive term, which may not be the case for Lie groupoid actions.

By \cite{AlexandrinoInagakiStruchiner2018} there is a strong connection between arbitrary closed singular Riemannian foliations and Lie groupoid actions. Namely, given $(M,\fol)$ a singular Riemannian foliation with closed leaves locally, then on sufficiently small tubular neighborhood $U$ of a fixed leaf $L$ there exists a foliation $\fol'\subset\fol$ which is induced by the orbits of an action Lie groupoid over the normal sphere bundle $\pi\colon U\to L$. Thus from \th\ref{MT: curvature of deformation} we  can change the metric in $U$ to have a local deformation of $(U,\fol')$ to $U/\fol'$. In some cases, such as orbit-like singular Riemannian foliations, or foliations of codimension $1$, we actually have that $\fol' = \fol$ over the set where $\fol'$ is defined. Nonetheless we point out that for an open cover of $M$ by such sufficiently small tubular neighborhoods there is no obvious way to glue such deformations to obtain a global deformation while retaining control over the sectional curvature. In \cite{Debord2001} sufficient conditions are given to determine when a general singular foliation, for which the set of regular leaves is dense, is given by the smallest possible holonomy Lie groupoid. But there are examples of singular Riemannian foliations for which such a groupoid does not exist (\cite[p. 496, Example 3]{Debord2001}). Thus the problem of giving a global deformation  procedure in an analogous sense to a Cheeger deformation, for a general closed singular Riemannian foliation remains open. 

Our article is organized as follows: In Section \ref{S: preliminaries} we review singular Riemannian foliations, Riemannian submersions and Cheeger deformations for compact Lie group actions by isometries. In Section \ref{S: Lie groupoid actions} we review the concepts of Lie groupoids, Lie groupoid actions and their connection to singular (Riemannian) foliations. In Section \ref{S: Cheeger like deformation for Lie groupoids actions} present our construction of a Cheeger like deformation for a Lie groupoid actions. We also present the proof  \th\ref{MT: deformation} in \th\ref{T: contraction of groupoid induces cheeger deformation}, and the proof of \th\ref{MT: curvature of deformation} in \th\ref{T: Cheeger deformation collapses} and \th\ref{T: Proof of theorem B}.

\section*{Acknowledgments}
I thank Marcos Alexandrino and Ilohann Sperança for useful conversations about Lie grou\-poids. I thank Marco Zambon for pointing me to the example of a the pair groupoid in the context of Riemannian submersions. I thank Fernando Galaz-García and John Harvey for useful comments that improved the presentation of this manuscript.

\section{Preliminaries}\label{S: preliminaries}

We start by presenting the classical theory of Cheeger deformations, and of Riemannian submersions.

\subsection{Riemannian submersions and curvature} A particular relevant example of a singular Riemannian foliation is a Riemannian submersion. Here we recall the relevant information about Riemannian submersions, but the interested reader may consult \cite{GromollWalschap}.

We consider $\pi\colon M\to B$ to be a smooth submersion from a smooth Rieamnnian manifold $(M,g)$ onto a smooth manifold $B$, i.e. for any $p\in M$ the differential $D_p\pi\colon T_pM\to T_{\pi(p)}B$ is a surjective map. We say that $\pi$ is a \emph{Riemannian submersion}, if for any vector field $X\in \kernel\, D\pi$ we have that $\mathcal{L}_X g=0$, i.e. the Lie derivative of $g$ in the direction of $X$ is zero. This implies that $g$ is constant along the fibers of $\pi$, and thus it induces a Riemannian foliation $\fol$ on $(M,g)$. Moreover, the Riemannian metric $g$ induces a smooth Riemannian metric $\tilde{h}$ on $B$. Given a submersion $\pi\colon (M,g)\to (B,h)$ between Riemannian manifolds we say that it is a Riemannian submersion if $h = \tilde{h}$. 

We denote by $\hor = \kernel\, D\pi^\perp \subset TM$ the \emph{horizontal distribution}, and by $\ver = \kernel\, D\pi$ the vertical distribution. Observe that $TM = \ver\oplus \hor$. Given a vector field $X\in \mathfrak{X}(M)$, we denote by  $X^v\in \ver$ the \emph{vertical component}, and by $X^h\in \hor$ the \emph{horizontal component}. In particular  a Riemannian submersion $\pi\colon (M,g)\to (B,h)$, is an isometry between the horizontal distribution and the tangent space of $B$. That is, we have have that $D\pi|_{\hor}\colon (\hor,g)\to (TB,\tilde{h})$ is a linear isometry on each fiber. 

Given a Riemannian submersion $\pi\colon (M,g)\to (B,h)$ we have the following relationships between the sectional curvature of the metrics $g$ and $h$.

\begin{thm}[O'Neill's formula \cite{ONeill1966}, see Section 1.5 in \cite{GromollWalschap}]\th\label{T: ONeills formula}
Let $\pi\colon (M,g)\to (B,h)$ be a Riemannian submersion. Then for any linearly independent $X,Y\in \mathfrak{X}(B)$ it holds
\begin{linenomath}
\begin{align*}
K_h(X,Y) =& K_g(\widetilde{X},\widetilde{Y})+3\Big\|A_{\widetilde{X}}\widetilde{Y}\Big\|^2_g\\
&=K_g(\widetilde{X},\widetilde{Y})+\frac{3}{4}\Big\|[\widetilde{X},\widetilde{Y}]^v\Big\|^2_g.
\end{align*}
\end{linenomath}
Here $\widetilde{X},\widetilde{Y}\in \hor$ are the unique horizontal lifts of $X,Y$, and $K_h(X,Y) = h(R_h(X,Y)Y,X)$ (analogous for $g$).
\end{thm}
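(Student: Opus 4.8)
The plan is to follow O'Neill's original argument, organized around the \emph{A}-tensor $A_E F := (\nabla^g_E F)^v$ defined for horizontal vector fields $E,F \in \hor$, where $\nabla^g$ is the Levi-Civita connection of $g$. First I would record the two structural facts about a Riemannian submersion that drive the computation: (i) for any $X,Y\in\mathfrak{X}(B)$ the horizontal lifts satisfy $[\widetilde{X},\widetilde{Y}]^h = \widetilde{[X,Y]}$, and $[E,V]\in\ver$ whenever $E$ is basic and $V\in\ver$; and (ii) since $\pi$ is Riemannian, $\langle\widetilde{X},\widetilde{Y}\rangle_g = \langle X,Y\rangle_h\circ\pi$ is constant along the fibers.

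Second, I would prove the key identity $A_E F = \tfrac{1}{2}[E,F]^v$ for basic horizontal $E,F$. Because $\nabla^g$ is torsion-free, $\nabla^g_E F - \nabla^g_F E = [E,F]$, so it suffices to establish the antisymmetry $A_E F = -A_F E$. Pairing $A_E F + A_F E$ against an arbitrary $V\in\ver$ and using $\langle E,V\rangle = \langle F,V\rangle = 0$ reduces the expression to $-\langle F,\nabla^g_E V\rangle - \langle E,\nabla^g_F V\rangle$; the relation $[E,V]\in\ver$ from (i) lets me replace $\nabla^g_E V$ by $\nabla^g_V E$ modulo a vertical term, turning the sum into $-V\langle E,F\rangle_g$, which vanishes by (ii). The second equality in the statement, $3\|A_{\widetilde{X}}\widetilde{Y}\|^2 = \tfrac34\|[\widetilde{X},\widetilde{Y}]^v\|^2$, is then immediate.

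Third comes the curvature computation. I would first show that the horizontal part of the connection descends, $(\nabla^g_{\widetilde{X}}\widetilde{Y})^h = \widetilde{\nabla^h_X Y}$, by comparing the Koszul formula for $\nabla^g$ on basic fields with that for $\nabla^h$ on $B$ and invoking (i)--(ii); consequently $\nabla^g_{\widetilde{X}}\widetilde{Y} = \widetilde{\nabla^h_X Y} + A_{\widetilde{X}}\widetilde{Y}$. Then I expand
\[
\langle R_g(\widetilde{X},\widetilde{Y})\widetilde{Y},\widetilde{X}\rangle_g = \langle \nabla^g_{\widetilde{X}}\nabla^g_{\widetilde{Y}}\widetilde{Y} - \nabla^g_{\widetilde{Y}}\nabla^g_{\widetilde{X}}\widetilde{Y} - \nabla^g_{[\widetilde{X},\widetilde{Y}]}\widetilde{Y},\ \widetilde{X}\rangle_g,
\]
substituting the decomposition of each connection term and splitting $[\widetilde{X},\widetilde{Y}] = \widetilde{[X,Y]} + 2A_{\widetilde{X}}\widetilde{Y}$. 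The purely horizontal contributions assemble into $\langle R_h(X,Y)Y,X\rangle_h\circ\pi$, while the vertical pieces—handled via $\langle\nabla^g_E V, F\rangle_g = -\langle A_E F, V\rangle$ and the antisymmetry of $A$—collect into $-3\|A_{\widetilde{X}}\widetilde{Y}\|^2_g$, yielding $K_g(\widetilde{X},\widetilde{Y}) = K_h(X,Y) - 3\|A_{\widetilde{X}}\widetilde{Y}\|^2_g$, which is the asserted identity.

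I expect the main obstacle to be the bookkeeping in this last step: correctly tracking horizontal versus vertical components through the two second-order covariant derivatives, and in particular handling the term $\nabla^g_{(2A_{\widetilde{X}}\widetilde{Y})}\widetilde{Y}$ coming from the vertical part of the bracket, whose pairing with $\widetilde{X}$ must be rewritten through the A-tensor so that the three separate $\|A\|^2$-type contributions combine with the correct signs into the single coefficient $3$. Care with the curvature sign convention $K = \langle R(X,Y)Y,X\rangle$ is essential here to land on $K_h = K_g + 3\|A\|^2$ rather than its negative; the Hopf fibration $\Sp^3\to\Sp^2$ provides a convenient consistency check ($4 = 1 + 3\cdot 1$).
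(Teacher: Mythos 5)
Your proposal is correct and is essentially the paper's own route: the paper states this theorem without proof, deferring to \cite{ONeill1966} and Section~1.5 of \cite{GromollWalschap}, and what you outline — antisymmetry of the $A$-tensor on basic fields giving $A_{\widetilde{X}}\widetilde{Y}=\tfrac12[\widetilde{X},\widetilde{Y}]^v$, descent of the horizontal part of the connection, and the curvature expansion collecting the vertical terms into the coefficient $3$ — is precisely the classical argument in those references. Your sign bookkeeping and the Hopf-fibration consistency check ($4 = 1 + 3\cdot 1$) match the convention $K=\langle R(X,Y)Y,X\rangle$ used in the statement.
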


\subsection{Cheeger Deformations}\label{S: Cheeger Deformations} Cheeger deformations where introdu\-ced by Che\-eger in \cite{Cheeger1973} as a way to describe several interesting results about the existence of metrics with lower curvature bounds. In this section we give a brief presentation of this construction, to familiarize the reader with them, and with the intention that the reader can contrast these deformations to the ones presented later in the manuscript.

Consider $M$ a  smooth manifold, and $H$ a compact Lie group. We say that \emph{$H$ acts smoothly on $M$} if there exists a smooth map $\mu\colon H\times M\to M$, such that
\begin{enumerate}[(i)]
    \item For $e\in H$ the identity element, and any $p\in M$ we have $\mu(e,p) = p$. 
    \item For any $h_1,h_2\in H$ and any $p\in M$ we have $\mu(h_1,\mu(h_2,p)) = \mu(h_1 h_2, p)$.
\end{enumerate}
The \emph{orbit through a point} $p\in M$ is the set $H(p) = \{\mu(h,p)\mid h\in H\}$. The \emph{isotropy subgroup} at $p\in M$ is the subgroup $H_p = \{h\in H\mid \mu(h,p) = p\}$. We say that the action is \emph{proper} if the map $\tilde{\mu}\colon H\times M\to M\times M$ given by $\tilde{\mu}(h,p) = (p,\mu(h,p))$ is a proper map. This means that the preimage of a compact set is compact. For a proper action, we have that for any point $p$ the subgroup $H_p$ is a closed subset. Moreover an orbits $H(p)$ is diffeomorphic to $H/H_p$.

Given a Riemannian metric $g$ on $M$, we say that it is $H$-invariant if for any $h\in H$ the map $\mu_h\colon (M,g) \to (M,g)$ defined as $\mu_h(p) = \mu(h,p)$, is an isometry with respect to $g$. In particular the partition $\fol_H = \{H(p)\mid p\in M\}$ is a singular Riemannian foliation of $(M,g)$. We refer to the leaf space $M/\fol_H = M/H$ as the \emph{orbit space of the action}. 

Given $\mu$ a smooth action of $H$ on a smooth manifold $M$, we can define a smooth action $\overline{\mu}$ of $H$ on the product $H\times M$, as follows:
\[
    \overline{\mu}(h_1,(h_2,p)) = (h_1 h_2,\mu(h_1,p)).
\]

Observe that the orbit space $(H\times M)/H$ is diffeomorphic to $M$ via the diffeomorphism $\tilde{\phi}\colon (H\times M)/H\to M$ given by 
\[
	\tilde{\phi}(H(h,p)) = \mu(h^{-1},p).
\]
If $(h',p')$ is an other representative of the orbit $H(h,p)$, we have that there exists $\tilde{h}\in H$, such that
\[
(\tilde{h}h,\mu(\tilde{h},p)) = \tilde{h}(h,p) = (h',p').
\]
Thus we have
\begin{linenomath}
\begin{align*}
	\mu (h',p') =& \mu((\tilde{h}h)^{-1},\mu(\tilde{h},p)) = \mu (h^{-1}\tilde{h}^{-1},\mu(\tilde{h},p)) = \mu(h^{-1}\tilde{h}^{-1}\tilde{h},p)\\
	 =& \mu(h^{-1},p).
\end{align*}
\end{linenomath}
Thus $\tilde{\phi}$ is well defined.

Assume that $H$ admits a bi-inviariant metric $Q$. We consider $(H\times M, \frac{1}{t} Q+g)$ and the map $t\colon H\times M\to M$ given by $t(h,p) = \mu(h^{-1},p)$. Observe that for each $t>0$ the metric  $\frac{1}{t}Q+g$ is $H$-invariant. Thus we obtain a family $\{g_t\}_{t>0}$ of Riemannian metrics on $M$, such that $t\colon (H\times M,\frac{1}{t}Q+g)\to (M,g_t)$ is a Riemannian submersion. We call $g_t$ the \emph{Cheeger deformation metric} of $g$ with respect to $Q$.

We denote by $e\in H$ the neutral element of $H$. Given a vector $x\in T_eH$ and $p\in M$, we define the \emph{action vector} $X^\ast(p)\in T_p M$ as
\[
D_{(e,p)}\mu(X,0)
\] 
In this way for each point $p\in M$ we can define a symmetric endomorphism $\Sh(p)\colon T_e H\to T_e H$, called the \emph{shape tensor} or \emph{orbit tensor}, via the identity:
\[
	Q(\Sh(p)(x),y) = g(X^\ast(p),Y^\ast(p))\quad \mbox{for all } y\in T_eH.
\]
For $t>0$ and $p\in M$ define the \emph{Cheeger tensor} $\Ch_t(p)\colon T_pM\to T_pM$ as follows. Given any vector $v\in T_p M$, there exists $x\in T_e H$ and $\xi\in \nu_p(H(p))$ such that $v= X^\ast(p)+\xi$. Then we set
\[
\Ch_t(p)(v) =\Big((\Id+t\Sh(p))^{-1}(x)\Big)^\ast(p)+\xi.
\]
This is an invertible tensor, and moreover by \cite[Satz 3.3]{Mueter} we have that for any $t\geq 0$ and $v,w\in T_p M$
\[
g_t(v,w) = g(\Ch_t(p)(v),w).
\]
In particular we get the following expression for the sectional curvature of $g_t$.

\begin{thm}[Section~3.d in \cite{Mueter}]\th\label{T: Classical cheeger deformation classical}
Consider $(M,g)$ a Riemannian, and $H$ a Lie group acting smoothly and effectively by isometries on $M$. We consider $Q$ a bi-invariant metric on $H$. Then for the Cheeger deformation metric $g_t$, we have for any $v=X^\ast(p)+\xi$, $w=Y^\ast(p)+\zeta\in T_pM$ the following
\begin{linenomath}
\begin{align*}
K(g_t)\Big(\Ch_t^{-1}(p)(v),\Ch^{-1}_t(p)(w)\Big) =& t^3 K(Q)(\Sh(p)(x),\Sh(p)(y))+ K(g)(v,w)\\
&+3\|A_{(t\Sh(p)(x),v)}(t\Sh(p)(y),w)\|^2_{\frac{1}{t}Q+g}\\
=& \frac{t^3}{4}\Big\|[\Sh(p)(x),\Sh(p)(y)]\Big\|^2_Q+K(g)(v,w)+\\ 
&+\frac{3}{4}\Big\|\big[(t\Sh(p)(x),v),(t\Sh(p)(y),w)\big]^v\Big\|^2_{\frac{1}{t}Q+g}.
\end{align*}
\end{linenomath}
\end{thm}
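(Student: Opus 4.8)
The plan is to obtain both displayed equalities as a direct application of O'Neill's formula (\th\ref{T: ONeills formula}) to the Riemannian submersion $t\colon (H\times M,\tfrac1t Q+g)\to (M,g_t)$ constructed just above the statement. Since this submersion and the metric $\tfrac1t Q+g$ are $H$-invariant, it suffices to carry out the computation at the single point $(e,p)$, which lies in the fiber $t^{-1}(p)=\{(h,\mu(h,p))\mid h\in H\}$. The whole argument then reduces to two ingredients: correctly identifying the horizontal lifts of $\Ch_t^{-1}(p)(v)$ and $\Ch_t^{-1}(p)(w)$, and evaluating the curvature of the \emph{product} metric $\tfrac1t Q+g$ on the resulting horizontal vectors.

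First I would describe the tangent decomposition at $(e,p)$. Differentiating the orbit of $\overline{\mu}$ gives the vertical space $\ver_{(e,p)}=\{(x,X^\ast(p))\mid x\in T_eH\}$, so a vector $(a,u)$ is horizontal exactly when $\tfrac1t Q(a,z)+g(u,Z^\ast(p))=0$ for every $z\in T_eH$; by the defining identity of the shape tensor $\Sh(p)$ this becomes the linear equation $\tfrac1t a+\Sh(p)(u_{\ver})=0$ in $T_eH$, where $u_{\ver}\in T_eH$ encodes the orbit-tangent part of $u$. Computing $D_{(e,p)}t(a,u)=u-A^\ast(p)$ (with $A^\ast$ the action vector of $a$) and imposing $D_{(e,p)}t(a,u)=\Ch_t^{-1}(p)(v)$ turns the lift into a coupled linear system whose solution requires inverting $\Id+t\Sh(p)$. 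Matching the outcome against the definition of $\Ch_t(p)$ then yields the horizontal lift $(t\Sh(p)(x),v)$ of $\Ch_t^{-1}(p)(v)$, and likewise $(t\Sh(p)(y),w)$ of $\Ch_t^{-1}(p)(w)$.

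With the lifts in hand, O'Neill's formula gives
\begin{align*}
K(g_t)\big(\Ch_t^{-1}(p)(v),\Ch_t^{-1}(p)(w)\big)=\,& K_{\frac1t Q+g}\big((t\Sh(p)(x),v),(t\Sh(p)(y),w)\big)\\
&+3\big\|A_{(t\Sh(p)(x),v)}(t\Sh(p)(y),w)\big\|^2_{\frac1t Q+g}.
\end{align*}
Since $\tfrac1t Q+g$ is a Riemannian product, its curvature tensor is the direct sum of the factor curvatures, so the first term splits as $K_{\frac1t Q}(t\Sh(p)(x),t\Sh(p)(y))+K(g)(v,w)$; using that the unnormalized sectional curvature scales as $K_{\frac1t Q}=\tfrac1t K(Q)$ and is homogeneous of degree four in its vector arguments gives $K_{\frac1t Q}(t\Sh(p)(x),t\Sh(p)(y))=t^3K(Q)(\Sh(p)(x),\Sh(p)(y))$, which is the first displayed equality. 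For the second equality I would substitute the bi-invariant curvature identity $K(Q)(a,b)=\tfrac14\|[a,b]\|^2_Q$ into the $t^3$-term, and apply the second line of \th\ref{T: ONeills formula}, namely $3\|A_{\widetilde X}\widetilde Y\|^2=\tfrac34\|[\widetilde X,\widetilde Y]^v\|^2$, to the $A$-term.

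The hard part will be the horizontal-lift computation: solving the simultaneous horizontality and lift equations and recognizing the operator $\Id+t\Sh(p)$ so that the answer lines up exactly with the definition of the Cheeger tensor. Some care is needed with the differential of inversion $h\mapsto h^{-1}$ at $e$, which contributes a sign in the $T_eH$-component of the lift; this sign is immaterial in the final formula, since the bi-invariance of $Q$ makes inversion an isometry of $\tfrac1t Q+g$ and the $T_eH$-component enters only through the squared-norm and bracket terms. The remaining steps — the product splitting of the curvature, the scaling behavior of $K_{\frac1t Q}$, and the bi-invariant identity — are then routine.
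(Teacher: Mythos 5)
Your proposal is correct and follows essentially the same route as the paper: the paper itself defers this classical statement to M\"uter, but its own groupoid generalization (\th\ref{T: bar(t)-Horizontal distribution}, \th\ref{T: h is horizontal lift of cheeger}, and \th\ref{T: Proof of theorem B}) proceeds exactly as you do --- compute the vertical space $\{(x,X^\ast(p))\}$, solve the horizontality condition via the orbit tensor to identify the $\overline{t}$-horizontal lift $(\pm t\Sh(p)(x),v)$ of $\Ch_t^{-1}(p)(v)$, and apply O'Neill's formula, with your product-splitting and scaling step ($K_{\frac{1}{t}Q}(t\Sh(p)(x),t\Sh(p)(y))=t^3K_Q(\Sh(p)(x),\Sh(p)(y))$) replacing the Gauss-equation terms of \th\ref{T: Proof of theorem B}, which indeed vanish here because $H\times_\ast M=H\times M$ is the full Riemannian product. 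Your handling of the sign coming from $h\mapsto h^{-1}$ is also right: it is immaterial since every term in which the $T_eH$-component appears is quadratic in that component.
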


\begin{remark}
Observe that the last two terms
\[
k(Q)(\Sh(p)x,\Sh(p)y)=\frac{t^3}{4}\Big\|[\Sh(p)(x),\Sh(p)(y)]\Big\|^2_Q,
\]
and
\[
3\Big\|\big[(t\Sh(p)(x),v),(t\Sh(p)(y),w)\big]^v\Big\|^2_{\frac{1}{t}Q+g}
\]
are non-negative for $t\geq 0$. Moreover, by \cite[Section 3.d]{Mueter} for $w,v$ $g$-orthonormal we have 
\begin{linenomath}
\begin{align*}
\xi(t)&=\|\Ch_t^{-1}(p)(v)\|^2_{g_t}\|\Ch_t^{-1}(p)(w)\|^2_{g_t}-g_t(\Ch_t^{-1}(p)(v),\Ch_t^{-1}(p)(w))\\
&= t^2(\|\Sh(p)x\|^2_Q\|\Sh(p)y\|^2_Q-Q(\Sh(p)x,\Sh(p)y)^2)+t(\|\Sh(p)x\|^2_Q\|\Sh(p)y\|^2_Q)+1.
\end{align*}
\end{linenomath}
Since $\Sec(Q)(\Sh(p)x,\Sh(p)y)\geq 0$ and $\|[\Sh(p)x,\Sh(p)y]\|^2_Q$, the term 
\[
(\|\Sh(p)x\|^2_Q\|\Sh(p)y\|^2_Q-Q(\Sh(p)x,\Sh(p)y)^2)
\]
is non-negative. Thus when we have that in the case when $\Sec(g)\geq 0$, with a  as we take $t\to \infty$ we have $\Sec(g_t)\geq 0$ as $t\to \infty$.
\end{remark}

These facts have been exploited to improve the curvature of metrics in the presence of group actions. In particular when considering a lower bound for the Ricci curvature \cite{GroveZiller2002,SearleWilhelm2015}.

The classical example of a Cheeger deformation, are the so called Berger spheres \cite[Example~3.35]{CheegerEbin1975}. This is the unit round $3$-sphere equipped with the Hopf fibration action by $\Sp^1$, and the family of metrics obtained from deforming the standard round  metric given by the embedding $\Sp^3\hookrightarrow \R^3$ via the action. For this example, the sectional curvature remain bounded, and at the limit as $t\to 0$, one obtains a sequence converging to the $2$-sphere with a constant metric of curvature $4$ \cite{Weber2009}. 

\section{Lie groupoid actions}\label{S: Lie groupoid actions}

In this section we present the concept of a Lie groupoid, and their connection to singular Riemannian foliations. The interested reader can consult \cite{GarmendiaGonzales2019} and \cite{Wang2018} for a more complete exposition.

A \emph{Lie groupoid $\G\rightrightarrows M$} consists of two smooth manifolds $\G$, referred to as the \emph{set of arrows}, and $M$, which is called the \emph{set of objects}, together with the following set of smooth maps:

\begin{enumerate}[a)]
 \setlength\itemsep{0.3em}
\item The \emph{source map} $s\colon \G\to M$, and the \emph{target map} $t\colon \G\to M$, which are surjective submersions.
\item The \emph{unit map} $\1\colon M\to \G$, which is an embedding. We refer to $\1_p = \1(p)$ as an \emph{unit element}.
\item The \emph{inversion map} $i\colon \G\to \G$, which is a diffeomorphism.
\item The set of \emph{composable arrows} $\G^{(2)} = \{(g,h)\in \G\times \G\mid s(g) = t(h)\}$ and the \emph{multiplication map} $\m\colon \G^{(2)}\to \G$.
\end{enumerate}
These maps satisfy the following conditions:
\begin{enumerate}[i)]
\item The source map is \emph{inviariant under left multiplication}, i.e. for all $(g,h)\in \G^{(2)}$ we have
\[
	s(\m(g,h)) = s(h).
\]
\item The target map is \emph{inviariant under right multiplication}, i.e. for all $(g,h)\in \G^{(2)}$ we have
\[
	t(\m(g,h)) = t(g).
\]
\item The multiplication is \emph{associative}, i.e. for all $(\tilde{g},g),(g,h)\in \G^{(2)}$ we have
\[
	\m(\m(\tilde{g},g),h) = \m(\tilde{g},\m(g,h)).
\]
\item The units act trivially: i.e. for all $p\in M$ and all $g\in \G$ we have
\[
	s(\1_p) = p = t(\1_p), \qquad \m(\1_{t(g)},g) = g = \m(g,\1_{s(g)}).
\]
\item The inversion map acts as an \emph{inversion} for the multiplication: i.e. for all $g\in \G$ we have
\[
	\m(g,i(g)) = \1_{t(g)},\qquad \m(i(g),g) = \1_{s(g)}.
\]
\end{enumerate}
The collection $\{s,t,\1,i,\m\}$ are called the \emph{structure maps} of the Lie groupoid $\G\rightrightarrows M$. 

The set $L_p = t(s^{-1}(p))$ is called the \emph{orbit through $p$} and it is an embedded submanifold of $M$. The \emph{isotropy group at $p$} is defined $\G_p = s^{-1}(p)\cap t^{-1}(p)$, and it is a Lie group (see for example \cite[Proposition~1.1.3]{Wang2018}).

Given a subset $S\subset M$ we say that it is \emph{saturated} if for any $p\in S$ we have $L_p\subset S$.

The following list has the goal of providing the reader with specific examples and to illustrate how broad is the notion of a Lie groupoid.

\begin{example}[Lie groups]
A Lie group $\G=G$ is a Lie groupoid  $\G\rightrightarrows \{\ast\}$, with the set of objects a point. In this example, the unit map $\1\colon \{\ast\}\to \G$ is identify with the neutral element of $G$, the multiplication and inversion maps of the Lie groupoid are given by the multiplication and inverse maps of then Lie group. Moreover, there is only one orbit $L_\ast = \{\ast\}$, and the isotropy groups are the Lie group $G$.
\end{example} 

\begin{example}[Manifolds]
A smooth manifold $M$, can be considered as the arrow and object space of a Lie gropoid $\G\rightrightarrows M$ as follows: We set $\G = M$, and the maps $s$, $t$, $\1$, and  $i$ to be the identity map $\Id_M\colon M\to M$. Observe that $\G^{(2)} = \{(p,q)\mid p= s(p)= t(q) = q\} = \Delta\subset M\times M$. By setting $\m = \proj_1\colon \G^{(2)} = \Delta\to M=\G$, the projection onto the first factor, we have that $\G\rightrightarrows M$ is a Lie groupoid. For any $p\in M$ we have $L_p = \{p\}$ and the isotropy groups are trivial, $\G_p = \{\1_p\}$.
\end{example}

\begin{example}[Pair groupoid]\label{Ex: Pair groupoid 1}
Given a smooth manifold $M$, the \emph{pair groupoid of $M$} $\G\rightrightarrows M$ is defined by setting $\G = M\times M$, $s= \proj_1$ the projection onto the $1$st factor, $t=\proj_2$ the projection onto the $2$nd factor, $\1_p = (p,p)$, and $i(p,q) = (q,p)$. It remains to define the multiplication map. Observe that $\G^{(2)} = \{\big((p_3,p_2),(p_2,p_1)\big)\mid p_1,p_2,p_3\in M\}$, and we set $\m\big((p_3,p_2),(p_2,p_1)\big) = (p_3,p_1)$. For any $p\in M$ we have that the orbit is $L_p = \{M\}$, and the isotropy groups are trivial, $\G_p = \{(p,p)\}=\{\1_{p}\}$.
\end{example}

\begin{example}[Submersion groupoid]\label{Ex: Submersion grupoid}
Let $f\colon M\to N$ be a smooth submersion. The \emph{submersion groupoid of $f$}, denoted as $\G_f\rightrightarrows M$, is obtained by setting:
\[
	\G_f = M\times_N M = \{(p,q)\in M\times M\mid f(q) = f(p) \},
\]
and $s,t\colon \G_f\to M$ by $s(p,q) = q$, $t(p,q) = p$, $i(p,q) = (q,p)$, and $\1\colon M\to \G_f$ as $\1_p = (p,p)$. Observe that 
\[
\G_f^{(2)} = \left\{\big((p_3,p_2),(p_2,p_1)\big)\mid (p_3,p_2),(p_2,p_1)\in \G\right\},
\] 
and we set $\m\big((p_3,p_2),(p_2,p_1)\big) = (p_3,p_1)$.

Here we have for any $p\in M$ that $L_p = t(\{(q,p)\in M\times M\mid q\in f^{-1}(p)\}) = f^{-1}(f(p))$, i.e. the orbits of $\G_f\rightrightarrows M$ is given by the fibers of the submersion $f$. Moreover $(\G_f)_p = \{\1_p\}$.
When $N = \{\ast\}$, then the submersion groupoid $\G_f\rightrightarrows M$ is the pair groupoid.
\end{example}

\begin{example}[Restriction groupoid]
Let $\G\rightrightarrows M$ be a Lie groupoid, and let $S\subset M$ be a smooth submanifold $S\subset M$.  Since both $s$ and $t$ are submersions, we have that $s^{-1}(S)$ and $t^{-1}(S)$ are smooth submanifolds of $\G$. When $s^{-1}(S)\cap t^{-1}(S)$ is also a submanifold,  we can define the \emph{restriction groupoid} $\G_S\rightrightarrows S$ as follows: We set $\G_S = s^{-1}(S)\cap t^{-1}(S)$, with the structure maps given by setting  restricting the structure maps of $\G\rightrightarrows M$, to $\G_S$, $\G_S^{(2)}$, and $S$. For example, when  $U\subset M$ is an open subset,  $\G_U$ is an open subset of $\G$, and thus we can define the  restriction groupoid  $\G_U\rightrightarrows U$.

Observe that a subset  $S\subset M$  is a saturated subset if and only if  $s^{-1}(S)=t^{-1}(S)$. Thus we can define the restriction groupoid $\G_S\rightrightarrows S$ for saturated submanifold $S$, and the codimension of $\G_S$ in $\G$ is the same as the codimension of $S$ in $M$. Any leaf $L\subset M$ of $\G\rightrightarrows M$ is a saturated submanifold.
\end{example}

\begin{remark}
When we consider $S$ to be an orbit $L\subset M$ of the Lie groupoid $\G\rightrightarrows M$, then by \cite[Section 3.4]{delHoyo2013}, we get the restriction groupoid $\G_L\rightrightarrows L$.
\end{remark}

\begin{example}[Tangent groupoid of a Lie groupoid]
Given a Lie groupoid $\G\rightrightarrows M$ with structure maps $\{s,t,\1,i,\m\}$, we define the \emph{tangent groupoid} $T\G\rightrightarrows TM$, by setting the structure maps to be $\{s_\ast,t_\ast,\1_\ast,i_\ast,\m_\ast\}$, i.e. the derivatives of the structure maps of $\G\rightrightarrows M$. 
\end{example}

\begin{example}[Groupoid induced by a group action]
Given $H$ a Lie group, $M$ a smooth manifold, and $\mu\colon H\times M\to M$ a smooth  group action of $H$ on $M$, we can encode the action in the \emph{groupoid induced by $\mu$},  $\G\rightrightarrows M$, as follows: We set $\G = H\times M$, and $s,t\colon \G\to M$ as $s(h,p) = p$, $t = (h,p) = \mu(h,p)$,  $i(h,p) = (h^{-1},\mu(h,p))$, and $\1\colon M\to \G$ as $\1_p = (e,p)$ where $e\in H$ is the neutral element of the group. Observe that $\G^{(2)} = \{((h_2,\mu(h_1,p)),(h_1,p))\mid p\in M,\ h_1,h_2\in H\}$. We set the multiplication map to be $\m \Big(\big(h_2,\mu(h_1,p)\big),(h_1,p)\Big) = (h_2h_1,p)$. Then $\G\rightrightarrows M$ is a Lie groupoid, and for $p\in M$ we have that the orbit of the Lie groupoid is the orbit of the group action, that is $L_p = H(p)$, and the isotropy groups of the Lie groupoid are the isotropy groupoids of the group action, i.e. $\G_p = H_p$. 
\end{example}

\subsection{Left groupoid action} As in the case of Lie groups and manifolds, we can define Lie groupoid actions on bundles as follows. 

Consider $\G\rightrightarrows M$ a Lie groupoid, $P$ a smooth manifold and $\alpha\colon P\to M$ a smooth map. A \emph{left action of $\G\rightrightarrows M$ on $P$ along $\alpha$} consists of the space of arrows 
\[
\G\times_M P = \{(g,p)\in \G\times P\mid s(g) = \alpha(p)\}\subset \G\times P
\] 
and a map $\mu\colon \G\times_M P\to P$ such that: 
\begin{enumerate}[(i)]
\item $\alpha(\mu(g,p)) = t(g)$ for all $(g,p)\in \G\times_M P$.
\item $\mu(g,\mu(h,p)) = \mu(\m(g,h),p)$ for all $(g,\mu(h,p))$, $(h,p)\in \G\times_M P$ and $(g,h)\in \G^{(2)}$.
\item $\mu(\1_{x},p) = p$, for all $x\in M$ with $x = \alpha(p)$.
\end{enumerate}
We call $\mu$ the \emph{action morphism} and $\alpha$ is called the \emph{moment map of the action}. 

We observer that any action $\mu$ of $\G\rightrightarrows M$ on $\alpha\colon P\to M$ realizes the arrows of the groupoid $\	G \rightrightarrows M$ as symmetries of the family of fibers of the moment map $\alpha$. That is, for each $g\in \G$ we define the diffeomporphism $\mu_g\colon P \supset \alpha^{-1}(s(g))\to \alpha^{-1}(t(g))\subset P$ as $\mu_g(p ) = \mu(g,p)$.

When $\alpha\colon P\to M$ is a vector bundle we denote by $P_b = \alpha^{-1}(b)$ the fiber over $b\in M$. The vector bundle is $\alpha\colon P\to M$ is called a \emph{left  representation of $\G\rightrightarrows M$} if we have a left action $\mu\colon \G\times_M P\to P$ of $\G\rightrightarrows M$ along $\alpha$, and the action is such that for each $g\in \G$ the map $\mu_g\colon P_{s(g)}\to P_{t(g)}$ is linear. 

Again, for the sake of completeness, we provide a list of examples of Lie groupoid actions

\begin{example}[Action groupoid]
A left action of a Lie groupoid $\G\rightrightarrows M$ on $P$ along $\alpha\colon P\to M$ given by a map $\mu\colon \G\times_M P\to P$ induces a new Lie groupoid. We set the space of arrows to be $\G\times_M P$, the space of objects to be $P$, and define the source and target maps $\overline{s},\overline{t}\colon \G\times_M P\to P$ by setting $\overline{s}(g,p) = p$, and $\overline{t}(g,p) =\mu(g,p)$. The unit map $\overline{\1}\colon P\to \G\times_M P$ is given by $\overline{\1}_p = (\1_{\alpha(p)},p)$, and the inversion map $\overline{i}\colon \G\times_M P\to \G\times_M P$ as $\overline{i}(g,p) = (i(g),\mu(g,p))$. Moreover we have
\begin{linenomath}
\begin{align*}
(\G\times_M P)^{(2)} = \bigg\{\Big(\big(g_2,\mu(g_1,p)\big),(g_1,p)\Big)\mid& (g_2,g_1) \in \G^{(2)}\mbox{ and } (g_1,p)\in \G\times_M P\bigg\}. 
\end{align*}
\end{linenomath}

We set the multiplication map $\overline{\m}\colon(\G\times_M P)^{(2)}\to \G$ as 
\[
\overline{\m}\Big(\big(g_2,\mu(g_1,p)\big),(g_1,p)\Big) = \big(\m(g_2,g_1),p\big).
\] 
With these structure maps we have a Lie groupoid $\G\times_M P\rightrightarrows P$, called the \emph{action Lie groupoid}. Given $p\in P$ the orbit through $p$ is $L_p = \{\mu(g,p)\mid g\in s^{-1}(\alpha(p))\}$, and the isotropy groups are given by $(\G\times_M P)_p = \G_{\alpha(p)}\times \{p\}$. 
\end{example}

\begin{example}[Lie group actions as Lie groupoid actions]
When we consider a Lie group $G=\G\rightrightarrows \{\ast\}$,  and a manifold $M$ with the trivial map $\alpha \colon M\to \{\ast\}$, then a Lie groupoid action of $\G\rightrightarrows \{\ast\}$ on $M$ along $\alpha$ is given by a group action $\mu\colon \G\times M\to M$. Moreover, the action groupoid of the action of $\G\rightrightarrows \{\ast\}$ on $M$ along $\alpha$ is the group action of $G$ on $M$. 
\end{example}

\begin{example}[Canonical Lie groupoid action]\th\label{E: canonical Lie groupoid action}
Given a Lie groupoid $\G\rightrightarrows M$, we define the \emph{canonical Lie groupoid action} of $\G\rightrightarrows M$ on $\Id\colon M\to M$ to be the map $\mu\colon \G\times_M M\to M $ given by 
\[
\mu(g,s(g)) = t(g).
\]
\end{example}

\begin{remark}
Observe that in \th\ref{E: canonical Lie groupoid action} the orbits of $\G\times_M M\rightrightarrows M$ coincide with the orbits of $\G\rightrightarrows M$.
\end{remark}

\begin{example}[Left Lie groupoid action]\th\label{E: Left Lie groupoid action}
Given a Lie groupoid $\G\rightrightarrows M$, we define the \emph{left Lie groupoid action} of $\G\rightrightarrows M$ on $t\colon \G\to M$, to be the map $\mu\colon \G\times_M \G\to M$ defined for $(g,h)\in \G^{(2)}$ as
\[
\mu(g,h) = \m(g,h).
\]
\end{example}

A Lie group action $\mu$ of $\G\rightrightarrows M$ on $\alpha\colon P\to M$ is called \emph{free} if the action groupoid $\G\times_M P\rightrightarrows P$ has trivial isotropy groups. The action is called \emph{proper} if the map $\G\times_M P\to P\times P$ defined by $(g,p)\mapsto (\mu(g,p),p)$ is a proper map. We define the orbit space of the action $P/\G$ as the quotient space given by the partition of $P$ by the orbits of the action groupoid $\G\times_M P\rightrightarrows P$. Moreover henceforth, when we refer to the orbit space $P/\G$, we also consider it a topological space equipped  with the quotient topology induced by the topology of $P$, and the orbit decomposition given by $\G\times_M P\rightrightarrows P$.

\subsection{Singular Riemannian foliations}

\begin{definition}\th\label{D: Singular Riemannian foliation}
A \emph{singular Riemannian foliation} $\fol=\{L_p\mid p\in M\}$ on a Riemannian manifold $(M,g)$ is a partition of $M$ into injectively immersed submanifolds $L_p$, called leaves, such that it is:
\begin{enumerate}[(i)]
\item a smooth singular foliation: There exists a family $\{X_\alpha\}$ of smooth vector fields of $M$ such that for any $p\in M$ with $p\in L_p$ we have $\mathrm{Span}\{X_\alpha(p)\} = T_p L_p$;
\item a transnormal system: For any geodesic $\gamma\colon [0,1]\to M$ with $\gamma'(0)\perp T_{\gamma(0)} L_{\gamma(0)}$, we have that $\gamma'(t)\perp T_{\gamma(t)} L_{\gamma(t)}$.
\end{enumerate}
\end{definition}

We say that a singular Riemannian foliation is \emph{closed} if all the leaves are closed manifolds (i.e. compact without boundary); in this case the leaves are embedded submanifolds. A subset $U\subset M$ is called \emph{saturated} if for any $p\in U$ we have $L_p\subset U$.

Fix a point $p\in M$ and $\varepsilon<\mathrm{injrad}(g)(p)$. We define  the \emph{normal disk tangent disk at $p$ of radius $\varepsilon$} by  
\[
\D^\perp_p (\varepsilon) = \{v\in T_p M\mid \|v\|^2_g\leq \varepsilon\}.
\]
Due to \cite{Molino}, for fixed $v\in \D^\perp_p (\varepsilon)$ by setting $q = \exp_p(v)\in L_{\exp_p(v)}$, then we have that $d(L_p,L_q) = d(L_p,q)=\|v\|_g$. That is, given a sufficiently closed leaf $L_q$ to $L_p$, then $L_q$ is contained in the boundary of a distance tubular neighborhood of $L_p$. By \cite{Molino}, the inverse image under $\exp_p$ of the connected components of the intersections of leaves with $\exp_p(\D^\perp(\varepsilon))$ gives a smooth foliation $\fol_p(\varepsilon)$ on $\D^\perp_p(\varepsilon)$. Moreover, $\fol_p(\varepsilon)$ is a singular Riemannian foliation with respect to the (restricted) Euclidean inner product $g_p$ on $T_p M$. Given $\varepsilon'\leq \varepsilon$, the foliated disks $(\D^\perp_p(\varepsilon'),\fol_p(\varepsilon'))$ and $(\D^\perp_p,\fol_p(\varepsilon))$ are foliated diffeomorphic via the homothety $h_{\varepsilon/\varepsilon'}\colon T_p M\to T_p M$ defined as $h_{\varepsilon/\varepsilon'}(w) := (\varepsilon/\varepsilon') w$. Thus the foliation $\fol_p(\varepsilon)$ does not depend on the scale $\varepsilon$. Thus we may assume without loss of generality that $\fol_p$ is defined on the normal unit disk $\D^\perp_p:=\D^\perp_p(1)$. We call this foliation \emph{the infinitesimal foliation at $p$}.

We also have that the leaves of $\fol_p(\varepsilon)$ are contained in spheres in $\D^\perp_p(\varepsilon)$. That is, given $v\in \D^\perp_p(\varepsilon)$, then the subset $\Sp^\perp_p(\|v\|_{g}) := \{w\in \D^\perp_p(\varepsilon)\mid \|w\|^2_{g}= \|v\|^2_{g}\}$ is saturated by $\fol_p(\varepsilon)$. Since $\fol_p^\perp(\varepsilon)$ is independent of $\varepsilon$, without loss of generalization, we may assume that $\fol_p$ restricted to the boundary sphere $\Sp^\perp_p=\partial \D^\perp$ induces a singular folation,  also called the infintesimal foliation, and which we also denote by $\fol_p$. Moreover by \cite{Molino}, the foliation $(\Sp^\perp_p,\fol_p)$ is a singular Riemannian foliation with respect to the round metric on $\Sp^\perp_p$ induced from the Euclidean metric on $\D^\perp$. Observe that via homotheties, we can recover $(\D^\perp_p,\fol_p)$ from $(\Sp^\perp_p,\fol_p)$.

Consider by $\Omega_p^\ast$ the set of all piece-wise smooth loops based at $p$. Then by \cite{MendesRadeschi2015}, for each $\gamma\in \Omega_p^\ast$ there exists a foliated isometry $\Gamma_\gamma\colon (\D^\perp_p,\fol_p)\to (\D^\perp_p,\fol_p)$. The group $\mathrm{Hol}(L,p):= \{\Gamma_\gamma\mid\gamma\in \Omega_p^\ast\}$ is a Lie group \cite{Radeschi-notes}: the connected component of $\mathrm{Hol}(L,p)$ containing the identity the map, denoted by $\mathrm{Hol}(L,p)_0$, is a path connected subgroup of the Lie group $\mathrm{O}(\Sp^\perp_p)$ and thus a Lie group by \cite{Yamabe1950}. The map $\overline{\mathrm{Hol}}\colon \pi_1(L,p)\to \mathrm{Hol}(L,p)/\mathrm{Hol}(L,p)_0$ is a  well defined surjective group morphism, \cite[Corollary A.5]{Corro}. In particular $|\pi_0(\mathrm{Hol}(L,p))|= |\mathrm{Hol}(L,p)/\mathrm{Hol}(L,p)_0|\leq |\pi_1(L,p)|$. But since $L$ is a smooth manifold, $\pi_1(L,p)$ is countable set \cite[Prop. 1.16]{Lee}. This implies that $\mathrm{Hol}(L,p)$ is a group whose number of connected components are countable, and the connected components are Lie groups. Thus, by definition $\mathrm{Hol}(L,p)$ is a smooth manifold. Then it follows that $\mathrm{Hol}(L,p)$ is a Lie group.

If $q\in L$ is in the same connected component as $p$ we have that
\[
\mathrm{Hol}(L,q)= G^{-1}\mathrm{Hol}(L,p)G
\]
for some foliated isometry $G\colon (\Sp_q^\perp,\fol_q)\to (\Sp^\perp_p,\fol_p)$. Thus for a singular Riemannian foliation $(M,\mathcal{F})$ with connected leaves, the holonomy of a leaf $L$ depends on the base point $p$ up to conjugation. 

\begin{thm}[Slice Theorem, \cite{MendesRadeschi2015}]\th\label{T: Slice Theorem}
Let $(M,\fol)$ be a closed singular Riemannian foliation and $L\in \fol$. Then, there exists a sufficiently small $r>0$ such that a distance tubular neighborhood $B_r(L)$ is saturated and it is foliated diffeomorphic to 
\[
	(P\times_{\mathrm{Hol}(L,p)} \D^\perp_p,\D^\perp_p\times_{\mathrm{Hol}(L,p)} \fol_p),
\]
where $P$ is a $K$-principal bundle over $L$. For $[\xi,v]\in P\times_{\mathrm{Hol}(L,p)} \D^\perp_p$ we have $[\zeta,w]\in L_{[\xi,v]}$ if and only if there exists $h\in K$ such that $h(v)=w$.
\end{thm}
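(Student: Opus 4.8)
The plan is to reduce the statement to a homogeneous model on the normal bundle of $L$ via the normal exponential map, and then to recognise that model as an associated bundle of the holonomy. Throughout I take $L$ to be a connected leaf, so that $\mathrm{Hol}(L,p)$ is well defined up to conjugation as recalled before the statement. First I would choose $r$ small enough that the normal exponential map $\exp^\perp\colon \nu^{\leq r}(L)\to M$ restricts to a diffeomorphism onto $B_r(L)$; such an $r$ exists because $L$ is a compact embedded leaf of the closed foliation $\fol$. Transnormality (\th\ref{D: Singular Riemannian foliation}(ii)) guarantees that the radial geodesics leaving $L$ perpendicularly stay perpendicular to every leaf they meet, and the equidistance property recalled after \th\ref{D: Singular Riemannian foliation} (from \cite{Molino}) shows that each nearby leaf lies on a single distance sphere $\{x : d(x,L)=c\}$. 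Together these give that $B_r(L)$ is saturated.

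The technical heart is to show that, under $\exp^\perp$, the restriction $\fol|_{B_r(L)}$ becomes the fibrewise homogeneous foliation: on each fibre $\nu_q\cong\D^\perp_q$ it restricts to the infinitesimal foliation $\fol_q$ (which is scale independent by the homothety argument in the excerpt), and the infinitesimal leaves over different points of $L$ are matched by foliated isometries. Granting this linearisation, I would build the holonomy principal bundle $P\to L$ by declaring its fibre over $q$ to be the set of foliated isometries $\phi\colon(\D^\perp_p,\fol_p)\to(\D^\perp_q,\fol_q)$ arising from $\fol$-holonomy transport along paths from $p$ to $q$. By the very definition $\mathrm{Hol}(L,p)=\{\Gamma_\gamma\mid\gamma\in\Omega_p^\ast\}$ recalled in the excerpt, the group $K:=\mathrm{Hol}(L,p)$ acts freely and transitively on each fibre of $P$ by precomposition, so $P$ is a principal $K$-bundle; local triviality and smoothness follow because nearby infinitesimal foliations are foliated-isometric through short parallel transport in $\nu(L)$, which furnishes smooth local sections.

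With $P$ in hand I would define
\[
\Phi\colon P\times_K \D^\perp_p\to B_r(L),\qquad \Phi([\phi,v])=\exp^\perp_q\!\big(\phi(r v)\big),
\]
where $\phi$ has target $\D^\perp_q$. Since every $h\in K$ is a linear isometry, $(\phi h)\big(r(h^{-1}v)\big)=\phi(r v)$, so $\Phi$ is well defined on the quotient; it is a diffeomorphism by smoothness of $\exp^\perp$ and local triviality of $P$, with surjectivity and injectivity coming from the fact that each point of $B_r(L)$ is $\exp^\perp_q$ of a unique normal vector, itself of the form $\phi(r v)$ for some foliated frame $\phi$. That $\Phi$ carries the product foliation $\D^\perp_p\times_K\fol_p$ onto $\fol|_{B_r(L)}$ is precisely the linearisation statement read fibre by fibre. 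Finally the leaf description follows by tracing through $\Phi$: two classes $[\xi,v]$ and $[\zeta,w]$ lie in the same leaf exactly when the $\fol_p$-leaf of $v$ is carried onto that of $w$ by holonomy, i.e. when there is $h\in K$ with $h(v)=w$; note this forces $\|v\|_g=\|w\|_g$, consistent with the leaves lying on common distance spheres.

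I expect the main obstacle to be the linearisation step, namely proving that the tube foliation is genuinely the homogeneous fibrewise model rather than merely tangent to it along $L$. This is where the Riemannian hypotheses (transnormality together with equidistance) are indispensable, since the conclusion fails for general singular foliations, and it constitutes the technical core of \cite{MendesRadeschi2015}. A closely related difficulty is establishing the smoothness and local triviality of the holonomy bundle $P$, which rests on the same linearisation together with the Lie group structure of $\mathrm{Hol}(L,p)$ already established in the excerpt.
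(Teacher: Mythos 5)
The paper itself gives no proof of this statement; it is imported verbatim from \cite{MendesRadeschi2015}, so your attempt has to be judged against the actual argument there. Your outline gets the architecture right: saturation of the tube from transnormality plus Molino's equidistance, a holonomy principal $K$-bundle $P\to L$, and the associated-bundle map $\Phi([\phi,v])=\exp^\perp_q(\phi(rv))$, whose well-definedness, bijectivity and leaf-tracing you check correctly. But it contains a genuine gap, and it is the one you flag yourself: the linearisation step is not a deferrable technicality, it \emph{is} the theorem. Once one grants that, read through $\exp^\perp$, the restricted foliation $\fol|_{B_r(L)}$ coincides with the fibrewise homogeneous model --- each slice carrying its infinitesimal foliation, slices over different points intertwined by foliated isometries depending smoothly on the endpoint --- everything else is bookkeeping; in particular the smoothness and local triviality of $P$, which you also leave to ``short parallel transport'', are part of the same unproved assertion. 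So the proposal reduces the Slice Theorem to an unproved statement of essentially equal strength. In \cite{MendesRadeschi2015} that statement is the hard content: one proves that vector fields tangent to the leaves admit linearisations around $L$ whose flows act on the normal disks preserving the infinitesimal foliation, and it is these linearised flows that define the transports $\Gamma_\gamma$, the bundle $P$, and its smooth structure; none of those objects exist before that analysis is carried out.

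A second, smaller imprecision is worth fixing. You assert that $[\xi,v]$ and $[\zeta,w]$ lie in the same leaf ``exactly when the $\fol_p$-leaf of $v$ is carried onto that of $w$ by holonomy, i.e.\ when there is $h\in K$ with $h(v)=w$''. The two conditions joined by ``i.e.'' are not equivalent: the correct condition for the associated-bundle foliation is $h(L^{\fol_p}_v)=L^{\fol_p}_w$ for some $h\in K$, which is strictly weaker than $h(v)=w$ unless every $K$-orbit is a union of leaves of $\fol_p$. Already for the foliation of $\R^2$ by concentric circles with $L=\{0\}$ the group $K$ is trivial, $P\times_K\D^\perp_p=\D^\perp_p$, and the leaf through $v$ is the whole circle through $v$, not the single point $\{v\}$. (The statement as printed in the paper carries the same slip, so you have reproduced it faithfully; but a proof must work with the leafwise condition.)
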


Moreover, we can identify $B_r(L)$ with the normal bundle $\nu(L)\to L$ and there exists an Ehresmann connection $H$, which induces a connection $\nabla^H$  on $\nu(L)$ such that $\mathrm{Hol}(\nabla^H,p)= \mathrm{Hol}(L,p)$ (see \cite{AlexandrinoInagakiStruchiner2018} and \cite{Alexandrino2010}). 

We denote by $\mathrm{O}(\Sp^\perp,\fol_p): = \{h\in \mathrm{Iso}(\Sp^\perp_p)\mid h(L_v)\subset L_{h(v)}\}$ the isometries of $(\Sp^\perp_p,\fol_p)$ that map leaves to leaves, and we set $\mathrm{O}(\fol_p):= \{h\in \mathrm{O}(\Sp^\perp,\fol_p)\mid h(v)\in L_v\}$. We observe that if $\dim(\Sp^\perp_p) = k_p$, then $\mathrm{Iso}(\Sp^\perp_p)=\mathrm{O}(k_p)$.

Let $K_p\subset \mathrm{O}(\fol_p)$ be the maximal connected Lie subgroup. Let $\mathrm{Hol}^H\rightrightarrows \nu(L)$ denote the Lie groupoid generated by all parallel translations with respect to $\nabla^H$ along piece-wise smooth curves $\gamma\colon [0,1]\to L$. We define \emph{the linearized foliation $\mathcal{F}^\ell_L$ of $\mathrm{fol}$ around $L$} as the foliation
\[
\mathcal{F}^\ell_L := \{\{P_\alpha(gv)\mid P_\alpha\mbox{ parallel transport along }\alpha\colon I\to L,\: g\in K_p,\: v\in \D^\perp_p\}.
\]
Via the normal exponential map the foliation $\mathcal{F}^\ell_L$ on $\nu(L)$ induces a foliation, also denoted $\mathcal{F}^\ell_L$, on $B_r(L)$. Then we have that for $L^\ell\in \fol^\ell_L$ that $L^\ell\subset L_v$ for $v\in L^\ell$, i.e. $\fol^\ell_L\subset \fol|_{B_r(L)}$. A closed singular Riemannian foliation $\fol$ on a compact manifold $M$ is called an \emph{orbit-like} foliation if for any $L\in \fol$ we have $\fol^\ell_L= \fol$ in some small tubular neighborhood of $L$. Observe that this is equivalent to the infinitesimal foliation $\fol_p$ being homogeneous at any point (see \cite{AlexandrinoRadeschi2016}). 

Then we have the following theorem: 

\begin{thm}[Theorem 1.5 in \cite{AlexandrinoInagakiStruchiner2018}]\th\label{T: Local transversal homogeneous subfoliation}
Let $(M,\fol,g)$ be a closed singular Riemannian foliation on a complete Riemannian manifold. Then for $L\in \fol$ there exists a saturated small tubular neighborhood $B_r(L)$, a Lie groupoid $\mathcal{G}\rightrightarrows L$ and a left represenation of this Lie groupoid on $\nu(L)\to L$ such that by identifying a small saturated tubular neighborhood of the $0$-section on $\nu(L)$ with $B_r(L)$, the orbits of the groupoid $\mathcal{G}\times_L \nu(L)\rightrightarrows \nu(L)$ on this tubular neighborhood can be identified with the leaves of $\fol^\ell_L$ on $B_r(L)$.
\end{thm}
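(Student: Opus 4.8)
The plan is to realize the linearized leaves as the orbits of the gauge (Atiyah) groupoid of a suitable principal bundle over $L$, equipped with its tautological representation on $\nu(L)$. Throughout I work on the normal bundle $\nu(L)\to L$ and transport everything to $B_r(L)$ through the normal exponential map, which by the discussion preceding the statement carries $\fol^\ell_L$ on $\nu(L)$ to $\fol^\ell_L$ on $B_r(L)$; thus it suffices to produce $\mathcal{G}\rightrightarrows L$ and a representation on $\nu(L)$ whose orbits are the leaves of $\fol^\ell_L$. By the Slice Theorem \th\ref{T: Slice Theorem} and the linearization discussion I may assume $\nu(L)$ carries the Ehresmann connection $\nabla^H$ with $\mathrm{Hol}(\nabla^H,p)=\mathrm{Hol}(L,p)$, and I recall that the leaf of $\fol^\ell_L$ through $v\in\D^\perp_p$ is $\{P_\alpha(gv)\}$ as $\alpha$ ranges over curves issuing from $p$ and $g$ ranges over $K_p$.

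First I would assemble the structure group. Set $k=\dim\D^\perp_p$ and regard $\mathrm{Hol}(L,p)$ and $K_p$ as subgroups of $\mathrm{Iso}(\Sp^\perp_p)=\mathrm{O}(k)$. Since every holonomy transformation lies in $\mathrm{O}(\Sp^\perp,\fol_p)$ and hence maps leaves of $\fol_p$ to leaves, conjugation by it preserves the group $\mathrm{O}(\fol_p)$, and therefore its identity component $K_p$; consequently $\mathrm{Hol}(L,p)$ normalizes $K_p$, and $\hat{K}:=\mathrm{Hol}(L,p)\cdot K_p$ is a closed Lie subgroup of $\mathrm{O}(k)$ containing $K_p$ as a normal subgroup. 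Using the reduction theorem, $\nabla^H$ reduces the orthonormal frame bundle of $\nu(L)$ to a principal $\mathrm{Hol}(L,p)$-subbundle $P_{\mathrm{Hol}}$; extending the structure group produces a principal $\hat{K}$-bundle $\hat{P}:=P_{\mathrm{Hol}}\times_{\mathrm{Hol}(L,p)}\hat{K}$ to which $\nabla^H$ still extends, and $\nu(L)\cong\hat{P}\times_{\hat{K}}\R^{k}$ as a bundle whose parallel transport is induced by $\nabla^H$.

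Next I would define $\mathcal{G}:=(\hat{P}\times\hat{P})/\hat{K}\rightrightarrows L$, the gauge groupoid of $\hat{P}$, with source and target the two projections, and equip it with its tautological left representation on $\nu(L)=\hat{P}\times_{\hat{K}}\R^{k}$ given by $[\xi_2,\xi_1]\cdot[\xi_1,u]=[\xi_2,u]$, a left action along the bundle projection $\nu(L)\to L$. A direct computation then shows that the orbit of the action groupoid $\mathcal{G}\times_L\nu(L)\rightrightarrows\nu(L)$ through $v$ is the associated sub-bundle $\hat{P}\times_{\hat{K}}(\hat{K}\cdot v)$, that is, the unique sub-fiber-bundle of $\nu(L)$ that is invariant under $\nabla^H$-parallel transport and has fiber $\hat{K}\cdot v$ over $p$.

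The heart of the argument, and the step I expect to be the main obstacle, is matching this orbit with the linearized leaf. Fixing $q\in L$ and a reference curve $\alpha_0$ from $p$ to $q$, every curve from $p$ to $q$ yields a parallel transport in $P_{\alpha_0}\circ\mathrm{Hol}(L,p)$, so the fiber over $q$ of the leaf $\{P_\alpha(gv)\}$ equals $P_{\alpha_0}\big(\mathrm{Hol}(L,p)\cdot K_p\cdot v\big)=P_{\alpha_0}(\hat{K}\cdot v)$; hence the linearized leaf is exactly the $\nabla^H$-parallel sub-bundle with fiber $\hat{K}\cdot v$ over $p$, which is precisely the gauge-groupoid orbit found above. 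Since a sub-bundle invariant under parallel transport is determined by a single fiber, the two coincide. I would finally note that $\hat{K}$ is compact and $L$ is closed, so $\mathcal{G}$ is a proper Lie groupoid acting properly, and that restricting to the disk bundle of radius $r$ and pushing forward by $\exp_p^{\perp}$ yields the asserted identification on $B_r(L)$; the real technical care lies in verifying smoothness of the reduction $\hat{P}$ and that the extension of $\nabla^H$ realizes the holonomy faithfully, which is where I anticipate the bulk of the work.
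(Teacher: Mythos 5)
First, a caveat on the comparison: the paper does not prove this statement at all --- it is imported verbatim as Theorem~1.5 of \cite{AlexandrinoInagakiStruchiner2018}, so there is no internal proof to measure your argument against; the only meaningful comparison is with the construction in that reference. Your route is, in essence, that construction: the groupoid built in \cite{AlexandrinoInagakiStruchiner2018} out of the slice-theorem principal bundle and the group bundle associated to $K_p$ (using that holonomy normalizes $K_p$) is isomorphic to the gauge groupoid of your extended bundle $\hat{P}=P_{\mathrm{Hol}}\times_{\mathrm{Hol}(L,p)}\hat{K}$, and your orbit-matching computation (fiber over $q$ equals $P_{\alpha_0}(\hat{K}\cdot v)$, both for the linearized leaf and for the gauge orbit) is correct, including the point that $\mathrm{Hol}(L,p)$ normalizing $K_p$ is what makes the sets $\{P_\alpha(gv)\}$ close up into genuine group-orbit fibers.

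The genuine gap is the unjustified claim that $\hat{K}:=\mathrm{Hol}(L,p)\cdot K_p$ is a \emph{closed} (later, compact) Lie subgroup of $\mathrm{O}(k)$. Nothing in the definition of $\mathrm{Hol}(L,p)$ --- a group of foliated isometries generated by loops, which the paper's own discussion endows with a Lie group structure having countably many components, but not with a closed embedding into $\mathrm{O}(k)$ --- prevents it a priori from being a non-closed subgroup (holonomy groups of connections can be dense in a torus, for instance), and a product of a non-closed subgroup with a closed connected normal subgroup need not be closed either. This is not a cosmetic issue: closedness of $\hat{K}$ is exactly what makes the reduction $P_{\mathrm{Hol}}$ and the extension $\hat{P}$ honest smooth principal bundles, $(\hat{P}\times\hat{P})/\hat{K}$ a Lie groupoid, and the action proper; without it the construction does not get off the ground. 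The fix is available but must be invoked: for \emph{closed} singular Riemannian foliations, the slice theorem of \cite{MendesRadeschi2015} (quoted as \th\ref{T: Slice Theorem}) exhibits $\mathrm{Hol}(L,p)$ as the structure group $K$ of a principal bundle whose associated disk bundle is the saturated tube $B_r(L)$ with its closed-leaf foliation, and this forces $\mathrm{Hol}(L,p)$ to be a compact subgroup of $\mathrm{O}(k)$; compactness of $\hat{K}=\mathrm{Hol}(L,p)\cdot K_p$ then follows since it is the continuous image of $\mathrm{Hol}(L,p)\times K_p$. With that input your argument is complete; as written, the step ``consequently $\hat{K}$ is a closed Lie subgroup'' is asserted rather than proved, and it is the crux, not the smoothness bookkeeping you flag at the end.
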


\subsection{General smooth singular foliations}

A \emph{smooth singular foliation} $(M,\fol)$ is a partition of $M$ into smooth submanifold satisfying \th\ref{D: Singular Riemannian foliation} (i). 

For the case when the foliation has closed leaves all of the same dimension, i.e. it is a closed regular foliation, there exists a Lie groupoid, the so called \emph{holonomy groupoid}, defining the foliation (see \cite{Ehresmann1965} \cite{Winkelnkemper1983}). 

In the case of a general smooth singular Riemannian foliations, there exist always a non necessarily Hausdorff topological holonomy groupoid inducing the foliation by \cite[Theorem 0.1]{AndroulidakisSkandalis2009}. In some particular cases this topological holonomy groupoid is actually a Lie groupoid, see \cite{Debord2001}. But as pointed out in \cite[p. 496, Examples 3.]{Debord2001} there are singular Riemannian foliations, such as the foliation of $\R^3$ by concentric $2$-spheres and the origin, for which such a global holonomy Lie groupoid may not exist. Nonetheless, by \cite{AlexandrinoInagakiStruchiner2018} this example is given by a Lie groupoid action. 

\subsection{Normal representation} We begin by pointing out that given a smooth group action $\mu\colon H\times M\to M$ of a Lie group $H$ on a smooth manifold $M$, we have a lift of the action to the tangent bundle of $M$, $\mu_\ast\colon H\times TM\to TM$. This action is given as follows: For $h\in H$ fixed, we can consider the diffeomorphism $\mu_h\colon M\to M$ defined as $\mu_h(p) = \mu(h,p)$. We then set $\mu_\ast(h,X) = (\mu_h)_\ast (X)$, for $h\in H$ and $X\in TM$. 

In contrast, for a Lie groupoid action $\G\rightrightarrows M$ on $\alpha\colon P\to M$ such a natural lift does not exists. But we have the so called \emph{normal representation} of the action groupoid $\G\times_M P \rightrightarrows P$.

Given two Lie groupoids $\G\rightrightarrows M$ and $\G'\rightrightarrows M'$, we define a \emph{morphism of Lie groupoids} $\phi\colon (\G\rightrightarrows M)\to (\G'\rightrightarrows M')$ to be two smooth maps $\phi^{\ar}\colon \G\to \G'$ and $\phi^{\ob}\colon M\to M'$ such that for the structure maps $\{s,t,\1,i,\m\}$ of $\G\rightrightarrows M$ and $\{s',t',\1',i',\m'\}$ of $\G'\rightrightarrows M'$ we have $s'\circ \phi^{\ar} = \phi^{\ob}\circ s$, $t'\circ \phi^{\ar} = \phi^{\ob}\circ t$, $i'\circ \phi^{\ar} = \phi^{\ar}\circ i$, $\1'\circ\phi^{\ob} = \phi^{\ar}\circ \1$, and for $(g,h)\in \G^{(2)}$ we have $\mu'(\phi^{\ar}(g),\phi^{\ar}(h)) = \phi^{\ar}(\m(g,h))$.

An \emph{VB-groupoid} consists of two groupoids $\Gamma\rightrightarrows E$ and $\G\rightrightarrows M$ and a groupoid map $\phi\colon (\Gamma\rightrightarrows E) \to (\G\rightrightarrows M)$, such that the maps $\phi^{\ar}\colon \Gamma\to \G$ and $\phi^{\ob}\colon E\to M$ are vector bundles, and the structure maps of $\Gamma\rightrightarrows E$ are vector bundle maps. We define the \emph{core} of the VB-groupoid to be $C = \kernel(s\colon \Gamma\to E)|_M$, where we identify $M$ with the $0$-section of $\phi^{\ob}\colon E\to M$. By the following proposition, VB-groupoids are equivalent to linear representations of Lie groupoids.

\begin{proposition}[Proposition~3.5.5 in \cite{delHoyo2013}]\th\label{P: correspondence between VB-groupoids and representations}
There is a $1$--$1$ correspondence between linear representations of a Lie groupoid $G\rightrightarrows M$ and VB-groupoids $(\Gamma\rightrightarrows E) \to (\G\rightrightarrows M)$  with trivial core.
\end{proposition}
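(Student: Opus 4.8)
The plan is to construct the two assignments realizing the bijection and then check that they are mutually inverse. For the forward direction, start with a linear representation $\mu\colon\G\times_M E\to E$ of $\G\rightrightarrows M$ on a vector bundle $q\colon E\to M$, and take $\Gamma$ to be the action Lie groupoid $\G\times_M E\rightrightarrows E$, with arrow projection $\phi^{\ar}(g,e)=g$ and object projection $\phi^{\ob}=q$. As a bundle over $\G$ we have $\Gamma=\{(g,e)\mid s(g)=q(e)\}=s^{*}E$, a vector bundle; together with $E\to M$ these are the two side bundles. It then remains to verify that the structure maps $\{\tilde s,\tilde t,\tilde\1,\tilde i,\tilde\m\}$ of $\Gamma\rightrightarrows E$ are vector-bundle maps over the corresponding maps of $\G\rightrightarrows M$. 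This is routine once one writes $\tilde s(g,e)=e$, $\tilde t(g,e)=\mu_g(e)$, $\tilde\1(e)=(\1_{q(e)},e)$ and $\tilde i(g,e)=(i(g),\mu_g(e))$, since each $\mu_g\colon E_{s(g)}\to E_{t(g)}$ is linear. Finally the core is $C=\kernel(\tilde s)|_{\1(M)}=\{(\1_p,e)\mid e=0\}=0$, so this VB-groupoid has trivial core.

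For the backward direction, let $\phi\colon(\Gamma\rightrightarrows E)\to(\G\rightrightarrows M)$ be a VB-groupoid with trivial core, and write $\Gamma_g$ for the fiber of $\phi^{\ar}\colon\Gamma\to\G$ over $g\in\G$. The crucial structural fact is the core exact sequence of vector bundles over $\G$,
\begin{linenomath}
\begin{align*}
0\longrightarrow t^{*}C\longrightarrow \Gamma \xrightarrow{\;\tilde s\;} s^{*}E\longrightarrow 0,
\end{align*}
\end{linenomath}
in which $\kernel(\tilde s)$ is identified fiberwise with $C_{t(g)}$ by translation in $\Gamma$ (consistent with $\kernel(\tilde s|_{\Gamma_{\1_p}})=C_p$ from the definition of the core). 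When $C=0$ this forces $\tilde s|_{\Gamma_g}\colon\Gamma_g\to E_{s(g)}$ to be a linear isomorphism, and symmetrically $\tilde t|_{\Gamma_g}\colon\Gamma_g\to E_{t(g)}$. I would then define the action by $\mu_g:=(\tilde t|_{\Gamma_g})\circ(\tilde s|_{\Gamma_g})^{-1}\colon E_{s(g)}\to E_{t(g)}$, which is linear. The action axioms are read off from the groupoid structure of $\Gamma$: for composable $g_2,g_1$ and $e\in E_{s(g_1)}$, the unique $\xi_1\in\Gamma_{g_1}$ with $\tilde s(\xi_1)=e$ and the unique $\xi_2\in\Gamma_{g_2}$ with $\tilde s(\xi_2)=\tilde t(\xi_1)$ are composable, their product lies in $\Gamma_{\m(g_2,g_1)}$ because $\tilde\m$ covers $\m$, and evaluating $\tilde s,\tilde t$ on this product yields $\mu_{\m(g_2,g_1)}=\mu_{g_2}\circ\mu_{g_1}$; the unit arrows give $\mu_{\1_p}=\Id$.

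It remains to check that the two round trips are identities up to isomorphism. Starting from a representation and forming the action groupoid, one has $\tilde s|_{\Gamma_g}=\Id_{E_{s(g)}}$ and $\tilde t|_{\Gamma_g}=\mu_g$, so the extracted action is again $\mu_g$. Starting from a VB-groupoid with trivial core, the map $\xi\mapsto(\phi^{\ar}(\xi),\tilde s(\xi))$ is a fiberwise-linear isomorphism $\Gamma\to s^{*}E$ over $\G$, and it intertwines all structure maps by construction, hence is an isomorphism of VB-groupoids onto the action groupoid of the extracted representation. I expect the principal obstacle to be the backward direction, precisely the justification of the core exact sequence and the identification $\kernel(\tilde s)\cong t^{*}C$ by translation in $\Gamma$: this is the structural step that converts the algebraic hypothesis of a trivial core into the geometric statement that $\tilde s$ and $\tilde t$ restrict to fiberwise isomorphisms, which is exactly what renders $\mu_g=\tilde t\circ\tilde s^{-1}$ well defined.
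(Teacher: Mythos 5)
The paper does not actually prove this proposition---it is imported verbatim as Proposition~3.5.5 of \cite{delHoyo2013}---so there is no internal argument to compare against; your proof is correct and is essentially the standard one from that reference: a representation yields its action groupoid, which is a VB-groupoid with trivial core, while conversely triviality of the core combined with the core exact sequence $0\to t^{*}C\to\Gamma\to s^{*}E\to 0$ (and its mirror for $\tilde t$) makes $\tilde s|_{\Gamma_g}$ and $\tilde t|_{\Gamma_g}$ fiberwise isomorphisms, so that $\mu_g=(\tilde t|_{\Gamma_g})\circ(\tilde s|_{\Gamma_g})^{-1}$ defines the representation. Your two round-trip checks and your identification of the core sequence (with the translation argument identifying $\kernel\tilde s$ with $t^{*}C$) as the crux are exactly right.
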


\begin{example}
A Lie groupoid $\G\rightrightarrows M$ together with its tangent groupoid $T\G\rightrightarrows TM$ and  the groupoid morphism $\pi\colon (T\G\rightrightarrows TM)\to \G\to M$ induced by the projections $\pi_{\G}\colon T\G\to \G$ and $\pi_M\colon TM\to M$, is a VB-groupoid. 
\end{example}

We now consider $\G\rightrightarrows M$ a Lie groupoid, and fix an orbit $L\subset M$. For the restriction groupoid $\G_{L}\rightrightarrows L$ the embedding $L\subset M$ induces an embedding $\G_L\subset \G$. With this embeddings we define the normal bundles $N(\G_L)\to \G_L$ and $N(L)\to L$ by setting $N(\G_L) = T\G|_{\G_L}/T\G_L$ and $N(L) = TM|_L/TL$. With this we obtain the following sequence of VB-groupoids
\[
(T\G_L)\rightrightarrows TL)\to (T\G|_{\G_L}\rightrightarrows TM|_{L})\to (N(\G_L)\rightrightarrows N(L)).
\]
Observe that as with the tangent groupoid of a Lie groupoid, we have a VB-groupoid 
\[
\pi\colon (N(\G_L)\rightrightarrows N(L))\to (\G_L\rightrightarrows L).
\] 
Moreover, since $\G_L\subset \G$ and $L\subset M$ have the same codimension, this implies that the bundles $N(\G_L)\to \G_L	$ and $N(L)\to L$ have the same rank.  From this it follows that the core of the VB-groupoid 	$\pi\colon (N(G_L)\rightrightarrows N(L))\to (\G_L\rightrightarrows L)$	has trivial core. From \th\ref{P: correspondence between VB-groupoids and representations} we have  a linear representation	$N(\mu)$ of $\G_L\rightrightarrows L$ on \linebreak$\pi_{N(L)}\colon N(L)\to L$, called the \emph{normal representation of $\G_L\rightrightarrows L$}, or the \emph{normal representation of $\G\rightrightarrows M$ around $L$}.	

We can also express the normal representation as follows (see \cite[p.~178]{delHoyo2013}). Consider $q\in L_p$, and $g\in s^{-1}(q)$. For $v\in N_q(L_p)$ we consider a curve $\gamma\colon I \to M$ with $\gamma(0) = q$, whose velocity $\gamma'(0)$ represents the class  $v$, i.e. $[\gamma'(0)] = v$ in $T_q M|_{L_p}/T_q L_p$. Let $\widetilde{\gamma}\colon I \to \G$ be a curve such that $\widetilde{\gamma}(0) = g$ and $s\circ \widetilde{\gamma} = \gamma$. Such a curve always exists, since $s\colon \G\to M$ is a submersion. Then we set $N(\mu)(g,v)$ to be $(t\circ \widetilde{\gamma})'(0)$, that is:
\[
N(\mu)(g,v) = N(\mu)\big(g,[D_g s(\gamma'(0))] \big) = [D_g t(\gamma'(0))].
\]

\subsection{Riemannian Lie groupoids and invariant actions}\label{S: Riemannian Lie groupoids} In this section we present the notions of Riemannian metrics associated to Lie groupoids presented in \cite{delHoyoFernandes2018}.

Given a Lie groupoid action $\mu$ of $\G\rightrightarrows M$ on $\alpha\colon P\to M$, we can consider $L\subset P$ an orbit of the action groupoid $\G\times_M P\rightrightarrows P$. Given a Riemannian metric $\eta^P$ on $P$, we can identify the total spaces of the  normal bundle $N(L)\to L$ with $\nu(L):= \{v\in TP|_{L}\mid\forall x\in TL,\: \eta^{P}(v,x) =0\}\subset TP$. 

\begin{definition}\th\label{D: mu-transversly invariant}
The Riemannian metric $\eta^P$ is called \emph{transversely $\mu$-invariant} if for the  normal representation $N(\mu)$ of the action groupoid $(\G\times_M P)_L \rightrightarrows L$ on $\nu(L)\to L$  is by isometries, i.e. for $(g,p)\in (G\times_M P)_L$ the map $N(\mu)_{(g,p)}\colon \nu_{p}(L) \to \nu_{\mu(g,p)}(L)$ is an isometry.
\end{definition}

A \emph{$0$-metric} on a Lie groupoid $\G\rightrightarrows M$ is a Riemannian metric $\eta^{(0)}$ on $M$, such that it is transversely $\mu$-invariant for $\mu$ the canonical action of the Lie groupoid (see \th\ref{E: canonical Lie groupoid action}). 

These metrics relate to Riemannian submersions by the following proposition.

\begin{proposition}
Given $f\colon M\to N$ a submersion, a $0$-metric on the submersion groupoid $M\times_N M\rightrightarrows M$ is a Riemannian metric $\eta^{(0)}$ making $f$ into a Riemannian submersion (see \cite[Example 3.4]{delHoyoFernandes2018}.
\end{proposition}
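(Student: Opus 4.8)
The plan is to unravel the $0$-metric condition for the submersion groupoid $M\times_N M\rightrightarrows M$ into the classical characterization of a Riemannian submersion via horizontal lifts. First I would recall from Example~\ref{Ex: Submersion grupoid} that the orbit of the submersion groupoid through a point $p\in M$ is the fiber $L_p = f^{-1}(f(p))$, so that the tangent space to a leaf is $T_p L_p = \kernel D_p f$. Fixing a Riemannian metric $\eta^{(0)}$ on $M$, I would identify the normal space $\nu_p(L_p)=(\kernel D_p f)^\perp$ with the horizontal distribution $\hor_p$ of $f$, and observe that $D_p f$ restricts to a linear isomorphism $\hor_p\to T_{f(p)}N$; under this identification the comparison of normal spaces along a leaf is encoded entirely by the differential $Df$.

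The key computation is to make the normal representation $N(\mu)$ of the canonical action (\th\ref{E: canonical Lie groupoid action}) completely explicit for this groupoid. Using the curve description of the normal representation recalled in the previous subsection, an arrow $g=(p',q)\in M\times_N M$ (so $f(p')=f(q)$) with $s(g)=q$, $t(g)=p'$ sends a class $[\gamma'(0)]\in N_q(L_q)$ to $[\beta'(0)]\in N_{p'}(L_{p'})$, where $\widetilde{\gamma}(u)=(\beta(u),\gamma(u))$ is a lift of $\gamma$ to the arrow space; the defining constraint $f(\beta(u))=f(\gamma(u))$ forces $D_{p'}f(\beta'(0)) = D_q f(\gamma'(0))$. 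Translating to horizontal spaces, $N(\mu)_g\colon \hor_q\to\hor_{p'}$ is exactly the map sending a horizontal vector $v$ to the unique horizontal vector $w$ at $p'$ with $D_{p'}f(w)=D_q f(v)$; equivalently, it sends the horizontal lift $\widetilde{X}_q$ of a vector $X\in T_{f(q)}N$ to the horizontal lift $\widetilde{X}_{p'}$ of the same $X$.

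With this identification in hand, the $0$-metric condition of \th\ref{D: mu-transversly invariant} becomes transparent: requiring $N(\mu)_g$ to be an isometry for every arrow $g$ is precisely the statement that $\eta^{(0)}(\widetilde{X}_q,\widetilde{Y}_q)=\eta^{(0)}(\widetilde{X}_{p'},\widetilde{Y}_{p'})$ for all $X,Y\in T_{f(q)}N$ and all $q,p'$ lying in a common fiber of $f$. This says exactly that the horizontal inner product is constant along the fibers, hence descends to a well-defined Riemannian metric $\tilde{h}$ on $N$, and that $Df|_{\hor}\colon (\hor,\eta^{(0)})\to (TN,\tilde{h})$ is a fiberwise linear isometry; that is, $f\colon (M,\eta^{(0)})\to (N,\tilde{h})$ is a Riemannian submersion. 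Conversely, any metric making $f$ a Riemannian submersion satisfies the same horizontal invariance and is therefore a $0$-metric, giving the equivalence (compare \cite[Example~3.4]{delHoyoFernandes2018}).

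I expect the main obstacle to be the bookkeeping in the second step: one must verify carefully that the normal representation, defined intrinsically on the quotient bundles $TM|_L/TL$, corresponds under the metric identification $N(L)\cong\nu(L)=\hor$ to the horizontal-lift comparison map, and in particular that the choice of lifting curve $\widetilde{\gamma}$ does not affect the resulting class in $N_{p'}(L_{p'})$. Once this identification is secured, the equivalence with the Riemannian submersion property is a formal rewriting of the isometry condition.
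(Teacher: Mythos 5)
Your proof is correct. Note that the paper itself offers no argument for this proposition---it is justified solely by the citation to \cite[Example 3.4]{delHoyoFernandes2018}---so your write-up supplies precisely the content that the paper outsources. Your key step is the right one: for an arrow $(p',q)$ of the submersion groupoid, differentiating the constraint $f\circ\beta = f\circ\gamma$ along any lift shows that the normal representation is the identity map under the isomorphisms $N_q(L_q)\cong T_{f(q)}N$ and $N_{p'}(L_{p'})\cong T_{f(p')}N$ induced by $Df$ (this also disposes of the well-definedness worry you raise at the end, since the class of $\beta'(0)$ in $T_{p'}M/\kernel D_{p'}f$ is determined by $D_{p'}f(\beta'(0)) = D_qf(\gamma'(0))$ independently of the chosen lift). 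From this, the equivalence between the isometry condition on the normal representation and constancy of the horizontal inner product along the fibers---i.e.\ the Riemannian submersion property in the sense recalled in Section~\ref{S: preliminaries}---is immediate, and you correctly handle both directions of the characterization.
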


Since $0$-metrics are only on the object space of a Lie groupoid, we can also consider Riemannian metrics on the space of arrows. A \emph{$1$-metric} on a Lie groupoid $\G\rightrightarrows M$ is a Riemannian metric $\eta^{(1)}$ on $\G$ which is transversely $\mu$-invariant for the left Lie groupoid action of $\G\rightrightarrows M$ on $t\colon \G\to M$ (see \th\ref{E: Left Lie groupoid action}), and such that the inversion map $i\colon \G\to \G$ is an isometry. 

\begin{remark}
An equivalent definition of a $1$-metric, is that $\eta^{(1)}$ is a Riemannian metric on $\G$ such that the map $s\colon \G\to M$ is a Riemannian submersion (see \cite[Example 2.9]{delHoyo2013}).
\end{remark}

From the following proposition it follows that $1$-metrics induce $0$-metrics.

\begin{proposition}[Proposition 3.8 in \cite{delHoyoFernandes2018}]\th\label{P: 1-metrics induce 0-metrics}
Given a Lie groupoid $\G\rightrightarrows M$, a $1$-metric $\eta^{(1)}$, there exists a Riemannian metric $\eta^{(0)}$ on $M$ such that:
\begin{enumerate}[(i)]
\item The source and target maps $s,t\colon \G\to M$ are Riemannian submersions, and the submanifold $\1(M)\subset \G$ is a totally geodesic submanifold.
\item The foliation $\fol_{\G}$ on $M$ induced by the orbits of the groupoid is a singular Riemannian foliation with respect to $\eta^{(0)}$.
\item $\eta^{(0)}$ is  a $0$-metric.
\end{enumerate}
\end{proposition}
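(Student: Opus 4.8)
The plan is to manufacture $\eta^{(0)}$ directly from the submersion structure carried by the $1$-metric and then verify the three assertions in turn. By the equivalent characterization of a $1$-metric recalled above, the source map $s\colon(\G,\eta^{(1)})\to M$ is a Riemannian submersion onto a unique metric on $M$; I take that induced metric to be $\eta^{(0)}$, so the statement about $s$ in (i) holds by construction. For the target map I would invoke the relations $i^2=\Id$, $s\circ i=t$ and $t\circ i=s$, all immediate from the groupoid axioms (inversion swaps source and target). Since $i$ is an isometry of $(\G,\eta^{(1)})$ by definition of a $1$-metric, the factorization $t=s\circ i$ exhibits $t$ as a composition of an isometry with a Riemannian submersion, hence $t$ is itself a Riemannian submersion onto $(M,\eta^{(0)})$.

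The total geodesy of $\1(M)$ I would deduce from the principle that the fixed-point set of an isometry is totally geodesic. Here $i$ is an isometric involution and $\1(M)\subset\mathrm{Fix}(i)$, since each unit is its own inverse. To upgrade this to $\1(M)$ being a union of connected components of $\mathrm{Fix}(i)$, I would compute $D_{\1_p}i$ on the splitting $T_{\1_p}\G=D_p\1(T_pM)\oplus\kernel(D_{\1_p}s)$: differentiating $i\circ\1=\1$ gives $D_{\1_p}i=\Id$ on $T_{\1_p}\1(M)$, while on the Lie algebroid fibre $\kernel(D_{\1_p}s)$ the inversion acts as $-\Id$ (the groupoid analogue of $g\mapsto g^{-1}$ having derivative $-\Id$ at the identity of a group). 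Thus the $(+1)$-eigenspace of the involution $D_{\1_p}i$ is exactly $T_{\1_p}\1(M)$, so $\1(M)$ is open in $\mathrm{Fix}(i)$ and therefore totally geodesic.

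The heart of the argument is (iii): that $\eta^{(0)}$ is transversely invariant for the canonical action in the sense of \th\ref{D: mu-transversly invariant}, i.e. that the normal representation $N(\mu)_g\colon\nu_{s(g)}(L)\to\nu_{t(g)}(L)$ acts by isometries for every arrow $g$. Writing $q=s(g)$, $p=t(g)$, and $W_g=\kernel D_gs+\kernel D_gt$, the normal representation factors through $T_g\G/W_g$ as $N(\mu)_g=\overline{D_gt}\circ(\overline{D_gs})^{-1}$, and I would run the estimate on the orthogonal complement $W_g^\perp=H^s_g\cap H^t_g$, where $H^s_g=(\kernel D_gs)^\perp$ and $H^t_g=(\kernel D_gt)^\perp$. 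For $\xi\in W_g^\perp$, using that $s$ is a Riemannian submersion, that $T_qL=D_gs(\kernel D_gt)$, and that $\xi\perp\kernel D_gt$, a short orthogonality computation shows $D_gs(\xi)\perp T_qL$, whence $D_gs(\xi)\in\nu_q(L)$ with $\|D_gs(\xi)\|_{\eta^{(0)}}=\|\xi\|_{\eta^{(1)}}$; the symmetric computation with $t$ gives $\|D_gt(\xi)\|_{\eta^{(0)}}=\|\xi\|_{\eta^{(1)}}$. Hence $\|N(\mu)_g(v)\|=\|v\|$ for all $v\in\nu_q(L)$, so $\eta^{(0)}$ is a $0$-metric. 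The linchpin is precisely that the $1$-metric condition forces \emph{both} $s$ and $t$ to be Riemannian submersions.

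Finally, (ii) follows from (iii): the orbit partition $\fol_\G$ is always a smooth singular foliation, its leaves being spanned by the image of the anchor and hence by smooth vector fields, so condition (i) of \th\ref{D: Singular Riemannian foliation} holds; the transverse invariance established in (iii) is exactly the infinitesimal condition that upgrades the partition to a transnormal system, so that geodesics issuing orthogonally to a leaf stay orthogonal to every leaf they meet. The step I expect to require the most care is the bookkeeping underlying (iii) — verifying that $\overline{D_gs}$ and $\overline{D_gt}$ really are isomorphisms of $T_g\G/W_g$ onto the respective normal spaces and that $W_g^\perp$ has the matching dimension — together with turning the infinitesimal transverse invariance into the global transnormality needed in (ii).
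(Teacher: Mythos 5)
First, a framing remark: the paper itself does not prove this proposition — it is imported verbatim from del Hoyo--Fernandes — so your proposal can only be judged against the standard argument. Most of what you wrote is correct and is essentially that argument: defining $\eta^{(0)}$ by pushing $\eta^{(1)}$ down through the Riemannian submersion $s$, obtaining $t=s\circ i$ as a Riemannian submersion onto the same metric because $i$ is an isometry, and the computation in (iii) exhibiting $W_g^\perp=(\kernel D_gs)^\perp\cap(\kernel D_gt)^\perp$ as a simultaneous isometric model of $\nu_{s(g)}(L)$ and $\nu_{t(g)}(L)$, so that $N(\mu)_g$ is an isometry — your orthogonality computation there is right and complete. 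One local error: your claim that $D_{\1_p}i$ acts as $-\Id$ on $\kernel D_{\1_p}s$ is false in general; the correct formula is $D_{\1_p}i(a)=D_p\1(\rho(a))-a$, where $\rho=Dt|_{\kernel Ds}$ is the anchor (differentiate $\m(g,i(g))=\1_{t(g)}$). Your conclusion nonetheless survives: if $u+a$ is fixed by $D_{\1_p}i$, then $D_p\1(\rho(a))=2a$; applying $D_{\1_p}s$ and using $s\circ\1=\Id_M$ together with $D_{\1_p}s(a)=0$ gives $\rho(a)=0$, hence $a=0$, so $\mathrm{Fix}(D_{\1_p}i)=T_{\1_p}\1(M)$ and the fixed-point-set argument for total geodesy of $\1(M)$ goes through after this repair.

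The genuine gap is (ii). You assert that the transverse invariance established in (iii) "is exactly" the condition giving transnormality, and you flag this step as delicate, but you give no argument — and it is not a formality. Transverse invariance is pointwise linear-algebraic data along each single orbit, whereas transnormality concerns a geodesic crossing a one-parameter family of \emph{different} orbits; some mechanism must propagate the orthogonality. The missing argument uses precisely the two facts you already have. Let $\gamma$ be a geodesic of $(M,\eta^{(0)})$ with $\gamma'(0)\perp T_{\gamma(0)}L_{\gamma(0)}$, and let $\tilde\gamma$ be its $s$-horizontal lift through an arrow $g$ with $s(g)=\gamma(0)$ (for instance $g=\1_{\gamma(0)}$). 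Since $s$ is a Riemannian submersion, $\tilde\gamma$ is a geodesic of $(\G,\eta^{(1)})$. Because $D_gs$ restricts to a linear isometry of $(\kernel D_gs)^\perp$ onto $T_{\gamma(0)}M$ carrying $W_g^\perp$ onto $(T_{\gamma(0)}L)^\perp$, the hypothesis $\gamma'(0)\perp T_{\gamma(0)}L$ forces $\tilde\gamma'(0)\in W_g^\perp\subset(\kernel D_gt)^\perp$. Now use that $t$ is \emph{also} a Riemannian submersion: a geodesic of the total space which is horizontal at one instant stays horizontal, so $\tilde\gamma'(\tau)\in(\kernel Dt)^\perp\cap(\kernel Ds)^\perp=W^\perp_{\tilde\gamma(\tau)}$ for all $\tau$. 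Your computation from (iii), applied at the arrow $\tilde\gamma(\tau)$ rather than at $g$, then yields $\gamma'(\tau)=Ds\bigl(\tilde\gamma'(\tau)\bigr)\perp T_{\gamma(\tau)}L_{\gamma(\tau)}$, which is transnormality; smoothness of the singular foliation is, as you say, standard for orbit partitions of Lie groupoids. Without some such lifting argument, part (ii) — which is the part this paper actually relies on, since it is what makes $\fol_\G$ a singular Riemannian foliation — remains unproved.
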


A Lie groupoid $\G\rightrightarrows M$ is called a \emph{foliation groupoid}, if for every $p\in M$, the  isotropy group $\G_p$ is discrete.

Conversely every $0$-metric induces a $1$-metric:
\begin{proposition}\th\label{P: extension of 0-metrics}
If $G\rightrightarrows M$ is a foliation groupoid and $\eta^{(0)}$ is a $0$-metric, then there exists a $1$-metric $\eta^{(1)}$ on $\G$, such that $\eta^{(0)}$ is  the metric induced in \th\ref{P: 1-metrics induce 0-metrics} by $\eta^{(1)}$.
\end{proposition}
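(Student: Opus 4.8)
The plan is to write down one explicit metric on $\G$ and to verify the two defining properties of a $1$-metric directly. By the remark giving the equivalent description of $1$-metrics, a metric $\eta^{(1)}$ on $\G$ is transversely $\mu$-invariant for the left action of \th\ref{E: Left Lie groupoid action} exactly when $s\colon \G\to M$ is a Riemannian submersion; so it suffices to produce a metric for which $s$ is a Riemannian submersion onto $(M,\eta^{(0)})$ and the inversion $i$ is an isometry. Once $s$ is a Riemannian submersion onto $(M,\eta^{(0)})$, the $0$-metric extracted from $\eta^{(1)}$ in \th\ref{P: 1-metrics induce 0-metrics} is the unique metric on $M$ turning $s$ into a Riemannian submersion, hence must coincide with $\eta^{(0)}$; this settles the last clause. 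Throughout, write $\ver = T\fol_\G\subset TM$ for the tangent distribution of the (regular, of constant rank $\dim\G-\dim M$, since the isotropy groups are discrete) orbit foliation, $\hor=\ver^{\perp}$ for its $\eta^{(0)}$-orthogonal complement, and $X\mapsto X^{\top},X\mapsto X^{\perp}$ for the associated orthogonal projections of $X\in TM$ onto $\ver$ and $\hor$. My candidate is the symmetric form obtained by polarising, for $w\in T_g\G$,
\[
\eta^{(1)}(w,w):=\eta^{(0)}\big(Ds\,w,\,Ds\,w\big)+\eta^{(0)}\big((Dt\,w)^{\top},\,(Dt\,w)^{\top}\big).
\]

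The foliation-groupoid hypothesis enters first in showing this is a genuine metric. Because every $\G_p$ is discrete, $t$ restricts to a local diffeomorphism of each source fibre onto its orbit, so $Dt(\kernel Ds)=\ver$ and $\kernel Ds\cap\kernel Dt=0$. Hence if $\eta^{(1)}(w,w)=0$ then $w\in\kernel Ds$ and $(Dt\,w)^{\top}=Dt\,w=0$, forcing $w\in\kernel Ds\cap\kernel Dt=0$; thus $\eta^{(1)}$ is positive definite, and it is smooth since $\ver$ is a smooth subbundle and $Ds,Dt$ are smooth.

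Next I would check that $s$ is a Riemannian submersion. For $w\in\kernel Ds$ one has $Ds\,w=0$ and $(Dt\,w)^{\top}=Dt\,w$, so $\eta^{(1)}(w,w')=\eta^{(0)}\big(Dt\,w,(Dt\,w')^{\top}\big)$; as $Dt(\kernel Ds)=\ver$, this vanishes for all such $w$ precisely when $(Dt\,w')^{\top}=0$, which therefore characterises the $\eta^{(1)}$-horizontal vectors. For a horizontal lift $\widetilde\xi$ of $\xi\in T_{s(g)}M$ we get $\eta^{(1)}(\widetilde\xi,\widetilde\xi)=\eta^{(0)}(\xi,\xi)$, so $Ds$ is a fibrewise isometry on the horizontal distribution and $s$ is a Riemannian submersion onto $(M,\eta^{(0)})$. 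By the equivalent description this already gives transverse $\mu$-invariance for the left action and pins down the induced $0$-metric as $\eta^{(0)}$.

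The main obstacle is the inversion clause, and this is exactly where the hypothesis that $\eta^{(0)}$ is a $0$-metric is used. Since $s\circ i=t$ and $t\circ i=s$, substituting $Ds\circ Di=Dt$ and $Dt\circ Di=Ds$ yields, after expanding $|Ds\,w|^2=|(Ds\,w)^{\top}|^2+|(Ds\,w)^{\perp}|^2$,
\[
(i^{\ast}\eta^{(1)})(w,w)-\eta^{(1)}(w,w)=\eta^{(0)}\big((Dt\,w)^{\perp},(Dt\,w)^{\perp}\big)-\eta^{(0)}\big((Ds\,w)^{\perp},(Ds\,w)^{\perp}\big),
\]
so $i$ is an isometry if and only if $\lvert(Ds\,w)^{\perp}\rvert_{\eta^{(0)}}=\lvert(Dt\,w)^{\perp}\rvert_{\eta^{(0)}}$ for every $w\in T\G$. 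For a foliation groupoid each arrow $g$ joins $s(g)$ and $t(g)$ within one orbit $L$, so the normal representation $N(\mu)_g$ of the canonical action is defined, and by its explicit formula $N(\mu)_g([Ds\,w])=[Dt\,w]$; under the $\eta^{(0)}$-identification $N(L)\cong\hor|_{L}$ this sends $(Ds\,w)^{\perp}$ to $(Dt\,w)^{\perp}$. Since $\eta^{(0)}$ is a $0$-metric, $N(\mu)_g$ is an isometry (\th\ref{D: mu-transversly invariant}), which gives the equality of norms, hence $i^{\ast}\eta^{(1)}=\eta^{(1)}$ after polarisation. Therefore $\eta^{(1)}$ is a $1$-metric inducing $\eta^{(0)}$, as required. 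I expect the only delicate points to be justifying that the two projections appearing above really are controlled by the normal representation, and confirming smoothness of $\ver$; both are immediate from the discreteness of the isotropy groups.
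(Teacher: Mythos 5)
Your proof is correct, and it is worth noting that the paper itself offers no proof of this proposition at all: it is stated as imported background from the del Hoyo--Fernandes theory of Riemannian groupoids, so your argument stands as a self-contained replacement rather than a variant of an existing one. Your candidate metric $\eta^{(1)}=s^{*}\eta^{(0)}+(\text{vertical part of }t^{*}\eta^{(0)})$ works, and you have located exactly the three places where the hypotheses enter: (1) discreteness of the isotropy groups gives both that $\ver=T\fol_{\G}$ is a smooth constant-rank subbundle (the anchor is fibrewise injective, so its image is a subbundle) and that $\kernel Ds\cap\kernel Dt=0$, which is what makes $\eta^{(1)}$ positive definite; (2) your computation of the $\eta^{(1)}$-orthogonal complement of $\kernel Ds$ as $\{w:(Dt\,w)^{\top}=0\}$, on which $\eta^{(1)}$ restricts to $s^{*}\eta^{(0)}$, shows (after the dimension count giving surjectivity of $Ds$ there) that $s$ is a Riemannian submersion onto $(M,\eta^{(0)})$, and uniqueness of the base metric of a Riemannian submersion pins down the induced $0$-metric of \th\ref{P: 1-metrics induce 0-metrics} as $\eta^{(0)}$; (3) the inversion condition reduces, exactly as you say, to $\lvert(Ds\,w)^{\perp}\rvert_{\eta^{(0)}}=\lvert(Dt\,w)^{\perp}\rvert_{\eta^{(0)}}$, which under the identification $N(L)\cong\hor|_{L}$ is precisely the statement that the normal representation $N(\mu)_{g}([Ds\,w])=[Dt\,w]$ of the canonical action acts by isometries, i.e.\ that $\eta^{(0)}$ is a $0$-metric. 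Two small remarks: the paper's remark equating $1$-metrics with metrics making $s$ a Riemannian submersion, taken literally, would subsume the inversion condition, so your choice to read it only as characterizing transverse invariance for the left action and to verify inversion invariance separately is the careful (and correct) reading; and your local-diffeomorphism claim for $t$ on source fibres is justified because $t|_{s^{-1}(p)}\colon s^{-1}(p)\to L_{p}$ is a submersion between manifolds of equal dimension when $\G_{p}$ is discrete. What your route buys, compared with quoting the literature, is a short, averaging-free construction that visibly requires neither properness nor Hausdorffness of $\G$, and it reproduces the expected examples, e.g.\ the product metric $\eta^{(0)}\oplus\eta^{(0)}$ on the pair groupoid of \th\ref{Ex: Pair groupoid as a Riemannian groupoid}.
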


\begin{remark}
When we consider $\G= G\to \{\ast\}$ a Lie group, then any Riemannian metric on $G$ induces a $1$-metric on $\G$. Thus, we observe that $1$-metrics do not account for the Lie groupoid multiplication. To consider the multiplication, it is necessary to consider the space of composable arrows $\G^{(2)}$ of the Lie groupoid $\G\rightrightarrows M$.
\end{remark}

As done in \cite[Section~3.3]{delHoyoFernandes2018}, we can identify a pair of composable arrows $(g,h)$, and their multiplication $\m(g,h)$ with a diagram as follows:
\begin{center}
\begin{tikzcd}[column sep = small]
	&y\arrow[bend right,swap]{dl}{g}&\\
	z & &x\arrow[bend left]{ll}{\m(g,h)}\arrow[bend right,swap]{ul}{h},
\end{tikzcd}
\end{center}
where $x = s(h) = s(\m(g,h))$, $y= t(h)=s(g)$ and $z= t(g) = t(\m(g,h))$. With this identification, we can define an action of the symmetric group $S_3$ on $\G^{(2)}$, by identifying the source $s(h)$ with $1$, the target $t(h)$ with $2$, and the target $t(g)$ with $(3)$. So for example, the element $(1,3)\cdot (g,h)$ corresponds to the diagram
\[
\begin{tikzcd}[column sep = small]
	&y\arrow[bend right,swap]{dl}{g}&\\
	z & &x\arrow[bend left]{ll}{\m(g,h)}\arrow[bend right,swap]{ul}{h},
\end{tikzcd}\mapsto \begin{tikzcd}[column sep = small]
	&y\arrow[bend left]{dr}{g}&\\
	x\arrow[bend left]{ur}{h} \arrow[bend right,swap]{rr}{\m(g,h)} & &z
\end{tikzcd}.
\]
That is $(1,3)\cdot (g,h) = (i(h),i(g))$ (see \cite[Remark 3.13]{delHoyoFernandes2018} for an alternative definition). 

We have also three commuting groupoid actions $\mu_1$, $\mu_2$ and $\mu_3$ of $(\G\rightrightarrows M)$ on $\alpha_i\colon \G^{(2)}\to M$, $i\in \{1,2,3\}$, defined as follows:
\begin{enumerate}[(i)]
\item
\[
\mu_1\big(k,(g,h)\big) = \big(\m(k,g),h\big)\quad \mbox{and}\quad \alpha_1(g,h) = t(g),
\]
\item
\[
\mu_2\big(k,(g,h)\big) = \big(\m(g,i(k)),\m(k,h)\big)\quad \mbox{and}\quad \alpha_2(g,h) = t(h),
\]
\item
\[
\mu_3\big(k,(g,h)\big) = \big(\m(k,g),h\big)\quad \mbox{and}\quad \alpha_2(g,h) = s(h),
\]
\end{enumerate}
\begin{remark}
Observe that the action of $S_3$ on $\G^{(2)}$ interchanges the actions $\mu_i$ of $\G\rightrightarrows M$ on $\alpha_i\colon\G^{(2)}\to M$. Moreover, the orbits of the actions $\mu_1$, $\mu_2$, and $\mu_3$, i.e. the orbits of the action groupoids $\G\times_M \G^{(2)}\rightrightarrows \G^{(2)}$, are the fibers of the maps $\proj_2,\m,\proj_1\colon \G\times \G\supset \G^{(2)}\to \G$ respectively.
\end{remark}

A Riemannian metric $\eta^{(2)}$ on $\G^{(2)}$ is called a \emph{$2$-metric} on $\G\rightrightarrows M$ if it is transversely invariant for the $\mu_1$-action of  $\G\rightrightarrows M$ on $\alpha_1\colon \G^{(2)}\to M$ and the action of $S_3$ is by isometries. The pair $(\G\rightrightarrows M, \eta^{(2)})$ is called a \emph{Riemannian groupoid}. In the literature, the term ``Riemannian groupoid'' also refers to a $1$-metric on the Lie groupoid $\G\rightrightarrows M$, cf.\ \cite{GallegoGualdraniHectorReventos1989,Glickenstein2008}.

\begin{remark}
From the fact that $S_3$ permutes the actions $\mu_1$, $\mu_2$ and $\mu_3$, if follows that the definition of a $2$-metric $\eta^{(2)}$ on $\G^{(2)}$ is equivalent to the following definitions:
\begin{itemize}[itemsep = 0.5em]
\item $\eta^{(2)}$ is transvesrsely $\mu_i$-invariant for any of the actions $\mu_i$ of $\G\rightrightarrows M$ on $\alpha_i\colon \G^{(2)}\to M$, and the group $S_3$ acts by isometries.
\item $\eta^{(2)}$ makes any of the maps $\proj_2,\m,\proj_1\colon \G^{(2)}\to \G$ a Riemannian submersion, and the group $S_3$ acts by isometries.
\end{itemize}
\end{remark}

As before, a $2$-metric on $\G^{(2)}$ induces a $1$-metric on $\G$, and thus by \th\ref{P: 1-metrics induce 0-metrics} a $0$-metric on $\G$.

\begin{proposition}[Proposition 3.16 in \cite{delHoyoFernandes2018}]\th\label{P: 2-metrics induce 1-metrics}
Let $\G \rightrightarrows M$ be a Lie groupoid. A $2$-metric $\eta^{(2)}$ on $\G^{(2)}$ induces a $1$-metric $\eta^{(1)}$ on $\G$.
\end{proposition}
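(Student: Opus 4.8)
The plan is to use the description of a $2$-metric as three compatible Riemannian submersions and to descend $\eta^{(2)}$ along one of them. Concretely, by the second equivalent description of a $2$-metric the projection $\proj_1\colon(\G^{(2)},\eta^{(2)})\to\G$ is a Riemannian submersion; since $\eta^{(2)}$ is then constant along the fibres of $\proj_1$, it descends to a Riemannian metric $\eta^{(1)}$ on $\G$. I then have to check two things: that this $\eta^{(1)}$ does not depend on the chosen projection, and that it is a $1$-metric, i.e. (using the equivalent description of a $1$-metric) that $s\colon(\G,\eta^{(1)})\to M$ is a Riemannian submersion and that the inversion $i\colon\G\to\G$ is an isometry.

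For the first two points I would exploit the isometric $S_3$-action directly. A direct check on the triangle picture gives the explicit formulas $(1\,3)\cdot(g,h)=(i(h),i(g))$, $(1\,2)\cdot(g,h)=(\m(g,h),i(h))$ and $(2\,3)\cdot(g,h)=(i(g),\m(g,h))$, so that $\proj_1\circ(1\,2)=\m$ and $\proj_2\circ(2\,3)=\m$ as maps $\G^{(2)}\to\G$. Because $(1\,2)$ and $(2\,3)$ are isometries of $\eta^{(2)}$, pre-composition with them does not change the descended metric, and hence $\proj_1$, $\m$ and $\proj_2$ all induce the same metric $\eta^{(1)}$. For the inversion, the formula for $(1\,3)$ yields $\proj_1\circ(1\,3)=i\circ\proj_2$, and $(1\,3)$ sends $\proj_2$-fibres to $\proj_1$-fibres. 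Thus for $v,w\in T_h\G$, lifting horizontally through $\proj_2$ to $\widetilde v,\widetilde w$, the vectors $(1\,3)_\ast\widetilde v,(1\,3)_\ast\widetilde w$ are $\proj_1$-horizontal and project under $\proj_1$ to $i_\ast v,i_\ast w$; since $(1\,3)$ is an isometry and $\proj_1,\proj_2$ induce the same $\eta^{(1)}$, this gives $\eta^{(1)}(i_\ast v,i_\ast w)=\eta^{(1)}(v,w)$, so $i$ is an isometry.

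It remains to prove that $s$ is a Riemannian submersion, which I regard as the main obstacle. Here I would argue by the tower $\G^{(2)}\xrightarrow{\proj_2}\G\xrightarrow{s}M$: since $s\circ\proj_2=\alpha_3$ and the $\proj_2$-fibres are contained in the $\alpha_3$-fibres, the standard iterated Riemannian submersion principle reduces the claim to showing that $\alpha_3\colon(\G^{(2)},\eta^{(2)})\to M$ is itself a Riemannian submersion onto some metric $\eta^{(0)}$ on $M$ (this $\eta^{(0)}$ will automatically be the $0$-metric of \th\ref{P: 1-metrics induce 0-metrics}).

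The real work is this last descent all the way to $M$, and it is where the \emph{full} $2$-metric structure, rather than a single projection, is needed. The point is that an $\alpha_3$-fibre $\{(g,h)\mid s(h)=x\}$ is swept out jointly by the orbits of $\mu_1$ (the $\proj_2$-fibres) and of $\mu_2$ (the $\m$-fibres): modulo a $\proj_2$-fibre the action $\mu_2$ descends to the left action $h\mapsto\m(k,h)$, which is transitive on the $s$-fibre $s^{-1}(x)\subset\G$. Both $\mu_1$ and $\mu_2$ preserve $\alpha_3$, and since $\eta^{(2)}$ is transversely $\mu_i$-invariant for every $i$, the fibrewise metric induced on the normal bundle $\alpha_3^\ast TM$ is preserved along both families of orbits, hence is constant on each $\alpha_3$-fibre. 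Therefore $\eta^{(2)}$ descends to a metric $\eta^{(0)}$ on $M$ making $\alpha_3$ a Riemannian submersion, and the iterated-submersion principle then yields that $s\colon(\G,\eta^{(1)})\to(M,\eta^{(0)})$ is a Riemannian submersion. Together with the inversion being an isometry, this shows $\eta^{(1)}$ is a $1$-metric. The delicate step to get right is precisely the compatibility of the two transverse invariances on the overlap of the orbit types, ensuring that the normal metric is genuinely well defined on $M$ and not merely along each orbit direction separately.
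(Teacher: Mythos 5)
The paper does not actually prove this proposition: it is imported verbatim from \cite{delHoyoFernandes2018} (Proposition 3.16), so there is no in-paper argument to compare yours against. Judged on its own merits, your proof is correct, and it is essentially a reconstruction of the strategy of the cited source: descend along one face map, use the $S_3$-symmetry both for independence of the choice of face map and for the inversion, and push the transverse data all the way down to $M$. The verifiable ingredients check out: the transposition formulas $(1\,2)\cdot(g,h)=(\m(g,h),i(h))$, $(2\,3)\cdot(g,h)=(i(g),\m(g,h))$, $(1\,3)\cdot(g,h)=(i(h),i(g))$ are exactly what the triangle picture gives, whence $\proj_1\circ(1\,2)=\m$, $\proj_2\circ(2\,3)=\m$ and $\proj_1\circ(1\,3)=i\circ\proj_2$; the iterated-submersion principle (if $\pi_1$ and $\pi_2\circ\pi_1$ are Riemannian submersions, then so is $\pi_2$) is a true, standard fact; and your joint-transitivity claim holds, since given $(g,h)$ and $(g',h')$ with $s(h)=s(h')$, the $\mu_2$-move with $k=\m(h',i(h))$ lands on a pair of the form $(\,\cdot\,,h')$, after which one $\mu_1$-move adjusts the first entry.

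The step you flag as delicate does close, and the worry about ``compatibility on the overlap of the orbit types'' in fact dissolves once one notes where the comparison takes place. Because $\mu_i$ preserves $\alpha_3$, its normal representation maps the subspace $T(\alpha_3\mbox{-fibre})/T(\mu_i\mbox{-orbit})$ onto the corresponding subspace at the image point; a linear isometry of quotient inner-product spaces carrying a subspace onto a subspace induces an isometry of the further quotients, and quotient inner products compose, so transverse $\mu_i$-invariance really does make the normal representation act isometrically on $T(\G^{(2)})/T(\alpha_3\mbox{-fibre})$. Moreover, computing the normal representation with curves (as in the paper's description of $N(\mu)$) shows it commutes with $D\alpha_3$, i.e.\ under the identification $D\alpha_3\colon T(\G^{(2)})/T(\alpha_3\mbox{-fibre})\to T_xM$ it is the identity. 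Consequently each $\mu_1$- or $\mu_2$-move gives an honest equality of inner products on the \emph{fixed} vector space $T_xM$, so constancy along the whole $\alpha_3$-fibre follows immediately from your transitivity argument, with no separate compatibility condition to verify; smoothness of the resulting $\eta^{(0)}$ follows from local sections of $\alpha_3$. With that detail written out, your argument is a complete proof, and it produces the $0$-metric of \th\ref{P: 1-metrics induce 0-metrics} along the way.
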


That is, given a $2$-metric $\eta^{(2)}$ on a Lie groupoid $\G\rightrightarrows M$, there are induced metrics $\eta^{(1)}$ on $\G$ and $\eta^{(0)}$ on $M$, such that the following maps are Riemannian submersions
\begin{center}
\begin{tikzcd}
\G^{(2)} \arrow[r,"\m" description]
\arrow[r, shift left=2,"\proj_2"]
\arrow[r, shift right=2,swap,"\proj_1"] & \G \arrow[r, shift left, "s"]
\arrow[r, shift right,swap,"t"] & M.
\end{tikzcd}
\end{center}

\begin{remark}\th\label{R: 2-metrics on Lie groups}
Given a Lie group $\G = G\rightrightarrows \{\ast\}$, a bi-invariant metric $\eta$ on $\G$ is a $1$-metric on $G$, coming from the $2$-metric $\eta\times \eta$ on $\G^{(2)} = G\times G$. Nonetheless there are $1$-metrics on $G\rightrightarrows \{\ast\}$ coming from $2$-metrics which are not bi-invariant metrics (see \cite[Example 4.4]{delHoyoFernandes2018}.
\end{remark}

\subsection{Hausdorff proper Lie groupoids and existence of 2-metr\-ics} As with  $1$- and $0$-metrics, there are sufficient conditions to guarantee the existence of a $2$-metric given a $1$-metric. To state this condition we consider proper Lie groupoids. A Lie groupoid $\G\rightrightarrows M$ is \emph{proper}, if the map $\rho\colon \G\to M\times M$ given by $\rho(g) = (t(g),s(g))$ is a proper map. 

\begin{thm}[Theorem 1 in \cite{delHoyoFernandes2018}]\th\label{P: existence of 2-metrics}
Any proper Lie groupoid $\G\rightrightarrows M$ admits a $2$-metric. Moreover, given a $1$-metric $\eta^{(1)}$ on $\G\rightrightarrows M$, there exists a $2$-metric $\eta^{(2)}$ on $\G\rightrightarrows M$ such that $\eta^{(1)}$ agrees with the induced Riemannian metric given by \th\ref{P: 2-metrics induce 1-metrics}.
\end{thm}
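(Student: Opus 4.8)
The plan is to obtain the $2$-metric by averaging, with properness supplying the averaging machinery and the finite symmetry $S_3$ organizing the three transverse conditions. I first record the reduction that guides everything. By the equivalent description noted above, a $2$-metric is a Riemannian metric on $\G^{(2)}$ that makes each of $\proj_1,\m,\proj_2\colon \G^{(2)}\to \G$ a Riemannian submersion and for which $S_3$ acts by isometries; equivalently, it is transversely $\mu_1$-invariant and $S_3$-invariant. Because $S_3$ interchanges the three actions $\mu_1,\mu_2,\mu_3$, any $S_3$-invariant metric that is transversely $\mu_1$-invariant is automatically transversely $\mu_i$-invariant for all $i$, so all three projections become Riemannian submersions at once. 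Hence for the first assertion it suffices to produce an $S_3$-invariant metric on $\G^{(2)}$ that is transversely $\mu_1$-invariant.

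For the averaging itself I would use that a proper Lie groupoid carries a smooth left-invariant Haar system on its source fibers together with a normalized cutoff function; since $\G\rightrightarrows M$ is proper, so is the combined symmetry acting on $\G^{(2)}$ generated by $\mu_1,\mu_2,\mu_3$ and $S_3$, and these data define an averaging operator turning an arbitrary tensor into an invariant one. The one subtlety I would be careful about is that the naive prescription ``average the metric'' does not respect the relevant condition, because transverse invariance is \emph{not} a linear condition on the metric (the orthogonal complement to the orbits, and hence the induced normal metric, depends on the metric nonlinearly). The standard way around this is to average at the level of the \emph{cometric}: restriction of a cometric on $T^{\ast}\G^{(2)}$ to the canonical conormal bundle of the orbits is linear in the cometric, and ``the normal representation acts by isometries'' is then a linear, hence averaging-stable, condition. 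Averaging a seed cometric over the combined symmetry and inverting back yields a metric that is $S_3$-invariant and transversely invariant for all three actions, i.e.\ a $2$-metric; this proves the first assertion.

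For the refinement I would run the construction anchored to the prescribed $1$-metric $\eta^{(1)}$. By \th\ref{P: 1-metrics induce 0-metrics} the metric $\eta^{(1)}$ already makes $s,t\colon \G\to M$ Riemannian submersions and comes with a compatible $0$-metric $\eta^{(0)}$ on $M$. Using that $\proj_1\colon \G^{(2)}\to \G$ is a surjective submersion, I would fix a horizontal distribution $\hor=\kernel(D\proj_1)^{\perp}$ and a seed metric whose horizontal block is the pullback of $\eta^{(1)}$; by the Schur-complement description of the base metric of a Riemannian submersion, such a metric induces exactly $\eta^{(1)}$ on $\G$ through $\proj_1$. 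I would then average only in the directions that do not alter this induced metric, and invoke $S_3$-invariance of the end result to force the base metrics induced through $\m$ and $\proj_2$ to coincide with the one induced through $\proj_1$, namely $\eta^{(1)}$. Thus by \th\ref{P: 2-metrics induce 1-metrics} the resulting $2$-metric induces exactly $\eta^{(1)}$.

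The main obstacle is precisely the compatibility in this last step: the averaging that enforces transverse invariance moves along the orbits of the $\mu_i$, and these orbits cover the multiplication translations of $\G$, which are \emph{not} isometries of $\eta^{(1)}$, so the very averaging that produces invariance tends to replace $\eta^{(1)}$ by an average of its translates. Reconciling ``transverse invariance for all three faces together with $S_3$-symmetry'' with ``the induced $1$-metric is unchanged'' is the crux, since the set of metrics inducing a fixed base metric under a fixed submersion is \emph{not convex} and so is not preserved by a single averaging step. I would handle this by separating the construction into the conormal (linear, averaging-stable) part that secures the transverse-invariance conditions and the fixed horizontal block that pins the induced metric, checking that the cutoff and Haar data can be chosen $S_3$-equivariantly so that the two requirements do not interfere; this bookkeeping, rather than any isolated computation, is where the real work lies.
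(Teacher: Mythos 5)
This statement is not proved in the paper at all: it is quoted, with attribution, as Theorem~1 of \cite{delHoyoFernandes2018} (see also Theorem~4.13 there), so there is no internal argument to compare yours against; I can only measure your outline against what a complete proof requires. Your reductions are sound and match the remarks preceding the statement: a $2$-metric is exactly an $S_3$-invariant metric on $\G^{(2)}$ that is transversely $\mu_1$-invariant, properness is what supplies Haar systems and cutoff functions, and you are right that transverse invariance is not a convex condition on metrics while it is linear on the conormal restriction of the cometric. The first genuine gap is your claim that these data ``define an averaging operator turning an arbitrary tensor into an invariant one.'' No such canonical operator exists: an arrow $k$ of $\G$ acting through $\mu_i$ is a diffeomorphism only between fibers of the moment map $\alpha_i$, so neither a metric nor a cometric on $\G^{(2)}$ admits a pullback along it. What is canonical is only the induced action on normal (equivalently conormal) spaces of orbits via the normal representation; to average an actual tensor one must choose a cleavage/Ehresmann connection splitting the relevant submersion, and then prove that the transverse part of the averaged tensor is independent of that choice, invariant, and still positive definite. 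This is precisely the content of the averaging machinery of \cite{delHoyoFernandes2018} (the same mechanism behind \th\ref{T: existence of G->M invariant metrics on P->M}), and it is the core of the proof rather than a formality. Averaging only the conormal restriction, which \emph{is} canonical (and smooth here, since the $\mu_1$-orbits are the fibers of $\proj_2$), does not suffice either: it must then be extended to a global cometric, and the extension is where the second gap appears.

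The second gap is the simultaneity of the conditions, which your final paragraph defers to ``bookkeeping.'' Averaging over $\mu_1$ is not $S_3$-equivariant --- conjugating a $\mu_1$-average by $\sigma\in S_3$ yields a $\mu_j$-average for some $j\neq 1$ --- so it destroys $S_3$-invariance; conversely, any $S_3$-symmetric combination of the three averages destroys transverse $\mu_1$-invariance, since (even at the linear, cometric level) the sum of an invariant conormal restriction and a non-invariant one is non-invariant. Equivalently, the three conormal bundles $(\kernel D\proj_2)^{\circ}$, $(\kernel D\m)^{\circ}$, $(\kernel D\proj_1)^{\circ}$ overlap in a rank-$\dim M$ subbundle of $T^{\ast}\G^{(2)}$, so the three averaged restrictions cannot be prescribed independently and then glued into one cometric. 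Choosing the Haar and cutoff data $S_3$-equivariantly does not resolve this interference, nor does it pin down the induced $1$-metric for the ``moreover'' clause (your proposal to ``average only in directions that do not alter the induced metric'' is not a defined operation). Resolving exactly these points is the substance of del Hoyo--Fernandes' proof; as written, your outline assembles the correct ingredients but leaves the theorem's real difficulties unaddressed.
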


Moreover, given a proper Lie groupoid action $\mu$ of $\G\rightrightarrows M$ on $\alpha\colon P\to M$, and a Riemannian  metric $\eta$ on $P$, we can always give a new Riemannian metric $\tilde{\eta}$ on $P$ which is $\mu$-transversely invariant.

\begin{thm}[Proposition~4.11 in \cite{delHoyoFernandes2018}]\th\label{T: existence of G->M invariant metrics on P->M}
Given any Riemannian  metric $\eta$ on $P$, and a Lie groupoid action $\mu$ of a proper $\G\rightrightarrows M$ Lie groupoid on $\alpha\colon P\to M$, there exists Riemannian metric $\tilde{\eta}$ on $P$ which is $\mu$-transversely invariant. Moreover, in the case when $\eta$ is already $\mu$-invariant, then $\eta$ and $\tilde{\eta}$ agree on the directions normal to the orbits of the action groupoid $\G\times_M P\rightrightarrows P$.
\end{thm}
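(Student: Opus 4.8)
The plan is to produce $\tilde{\eta}$ by an averaging argument over the action groupoid, the groupoid analogue of averaging a metric over a compact group. Write $\mathcal{H}:=\G\times_M P\rightrightarrows P$ for the action groupoid, with source and target $\overline{s},\overline{t}$ and anchor $\overline{\rho}(g,p)=(\mu(g,p),p)$. The first step is to observe that $\mathcal{H}$ is itself a proper Lie groupoid: if $K\subset P\times P$ is compact with projections $K_1,K_2$, then on $\overline{\rho}^{-1}(K)$ one has $p\in K_2$, $s(g)=\alpha(p)\in\alpha(K_2)$ and $t(g)=\alpha(\mu(g,p))\in\alpha(K_1)$, so $(t(g),s(g))$ lies in the compact set $\alpha(K_1)\times\alpha(K_2)$ and hence $g$ lies in a compact subset of $\G$ by properness of $\G\rightrightarrows M$; since $\overline{\rho}^{-1}(K)$ is closed, it is compact. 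A proper Lie groupoid carries a smooth left-invariant Haar system $\{\beta^p\}_{p\in P}$ on its source fibres together with a cutoff function $\psi\colon P\to[0,\infty)$, proper along orbits and normalized so that $\int_{\overline{s}^{-1}(p)}\psi(\overline{t}(a))\,d\beta^p(a)=1$ for every $p$; these replace Haar measure on a compact group.

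Next I would average the fibrewise metric induced on the normal bundle. Fix an orbit $L$ of $\mathcal{H}$ and recall (\th\ref{D: mu-transversly invariant}) that $\eta$ identifies $N(L)$ with $\nu(L)\subset TP|_L$, endowing it with a fibre metric $h^0$, and that the normal representation $N(\mu)$ supplies, for each arrow $a$ from $p$ to $q=\overline{t}(a)$, a linear isomorphism $N(\mu)_a\colon N_p(L)\to N_q(L)$. Setting
\[
h_p(\overline{v},\overline{w})=\int_{\overline{s}^{-1}(p)}\psi(\overline{t}(a))\,h^0_{\overline{t}(a)}\big(N(\mu)_a\overline{v},\,N(\mu)_a\overline{w}\big)\,d\beta^p(a),\qquad \overline{v},\overline{w}\in N_p(L),
\]
defines a new fibre metric. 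Because $N(\mu)$ is a representation, $N(\mu)_{ab}=N(\mu)_a\circ N(\mu)_b$, and because the Haar system is left-invariant, the usual change of variables shows that $N(\mu)_b\colon (N_p(L),h_p)\to(N_q(L),h_q)$ is an isometry for every arrow $b$; that is, $h$ makes the normal representation act by isometries. I would keep this at the level of a change-of-variables argument rather than carry out the bookkeeping.

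The remaining, genuinely delicate, step is to assemble the invariant normal fibre metrics into a single smooth Riemannian metric $\tilde{\eta}$ on all of $P$. On each orbit one can declare $\tilde{\eta}$ to equal $\eta$ on $TL$, to equal $h$ on $\nu(L)$, and to make $TL\perp\nu(L)$; the obstacle is that the orbits of $\fol$ have varying dimension, so this prescription must be shown to glue to a smooth tensor across the singular strata. This is the crux of the proof, and I expect it to be the main difficulty. I would resolve it by performing the average globally rather than orbit by orbit: using the smooth dependence of $\{\beta^p\}$ and $\psi$ on $p$, together with a smooth transport $\tau_a\colon T_pP\to T_{\overline{t}(a)}P$ covering $N(\mu)_a$ on normal classes (for instance one built from an Ehresmann connection as in the Slice Theorem setup, \th\ref{T: Slice Theorem}), so that the same integral defines a smooth symmetric positive-definite $2$-tensor $\tilde{\eta}$ on $P$ whose normal part is exactly $h$. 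Finally, the ``moreover'' clause is immediate from the normalization of $\psi$: if $\eta$ is already $\mu$-invariant then $N(\mu)_a$ is an isometry for $h^0$, the integrand is constant equal to $h^0_p(\overline{v},\overline{w})$, and $h_p=h^0_p\int_{\overline{s}^{-1}(p)}\psi(\overline{t}(a))\,d\beta^p(a)=h^0_p$, so $\tilde{\eta}$ and $\eta$ coincide in the directions normal to the orbits.
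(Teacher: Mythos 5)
A preliminary remark: the paper does not prove this statement at all --- it is imported, with attribution, as Proposition~4.11 of \cite{delHoyoFernandes2018} (see the theorem header) --- so your proposal can only be compared with the proof in that reference and judged on its own correctness. Your overall strategy is of the same kind as the one used there, and much of it is sound: the argument that the action groupoid $\G\times_M P\rightrightarrows P$ is proper is correct; Haar systems with normalized cutoffs are the right substitute for Haar measure (small slip: a system of measures on source fibres is invariant under \emph{right}, not left, translations); the orbit-wise average of the normal metric is invariant because $N(\mu)$ satisfies the exact cocycle identity $N(\mu)_{ab}=N(\mu)_a\circ N(\mu)_b$; and the computation behind the ``moreover'' clause is right. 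One repair is already needed at this stage: the smooth transport $\tau_a$ cannot come from the Ehresmann connection of the Slice Theorem \th\ref{T: Slice Theorem}, both because that theorem presupposes a closed \emph{singular Riemannian} foliation --- which is exactly what \th\ref{T: existence of G->M invariant metrics on P->M} is used to produce in this paper, so the appeal is circular --- and because that connection is attached to a single leaf rather than varying smoothly over arrows. The correct source is elementary: a horizontal distribution $H$ for the surjective submersion $\overline{s}$, with $\tau_a:=D_a\overline{t}\circ(D_a\overline{s}|_{H_a})^{-1}$, which is smooth in $a$ and descends to $N(\mu)_a$ on normal classes.

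The genuine gap is in the final step, precisely where you assert that the global average is a positive-definite tensor ``whose normal part is exactly $h$''. Unlike $N(\mu)$, any such transport $\tau$ is multiplicative only modulo orbit directions: $[\tau_a\tau_b v]=[\tau_{ab}v]$ in $N(L)$, but $\tau_a\tau_b\neq\tau_{ab}$. Consequently the change-of-variables argument that proves invariance of the orbit-wise average $h$ does \emph{not} transfer to the tangent-level global average, and the claim that its normal part equals $h$ fails in general: the normal part of the averaged tensor is a minimum over tangential corrections taken \emph{outside} the integral, whereas $h$ takes that minimum \emph{inside}, and these differ. Positive definiteness is also not automatic: identifying the action groupoid of the pair groupoid $M\times M\rightrightarrows M$ acting canonically along $\Id_M$ with $M\times M$ itself, the horizontal distribution $H=TM\times 0$ for $\overline{s}=\proj_1$ gives $\tau\equiv 0$, so your average is the zero tensor. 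Both defects disappear if you dualize, which is the missing idea: average the cometric, setting $\tilde{\eta}^{*}_p(\xi,\zeta):=\int_{\overline{s}^{-1}(p)}\psi(\overline{t}(a))\,\eta^{*}\bigl((\tau_{\overline{i}(a)})^{*}\xi,(\tau_{\overline{i}(a)})^{*}\zeta\bigr)\,d\beta^p(a)$. The dual of the quotient metric on $N_p(L_p)$ is the restriction of the cometric to the annihilator $(T_pL_p)^{\circ}\subset T_p^{*}P$; restriction to a subbundle is linear in the metric, and any two transports covering the same $N(\mu)_a$ have duals that agree on annihilators, so on annihilators the cocycle identity holds exactly and your change-of-variables argument applies verbatim, yielding transverse invariance and the ``moreover'' clause. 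Positive definiteness is automatic on annihilators (each $N(\mu)_a$ is an isomorphism), and holds globally after choosing $H$ unital along the unit section, so that $\tau_{\overline{\1}_p}=\Id$, together with an everywhere-positive cutoff --- available here since compactness of $\G$, $M$, $P$ and properness make $\G\times_M P$ compact.
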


\begin{remark}
We consider a Lie group $G$ as the Lie groupoid $G\rightrightarrows \{\ast\}$,  $(P,\eta)$ any Riemannian manifold, and a proper Lie group action $\mu\colon G\times P\to P$.  Then we have a proper Lie groupoid action of $G\rightrightarrows \{\ast\}$ along $P\to\{\ast\}$ induced by $\mu$. The metric $\tilde{\eta}$ obtained from \th\ref{T: existence of G->M invariant metrics on P->M} is a $G$-invariant Riemannian metric on $P$. 
\end{remark}

\begin{example}[Pair groupoid]\th\label{Ex: Pair groupoid as a Riemannian groupoid}
Consider $M$ a compact manifold, and let $\G = M\times M\rightrightarrows M$ be the pair groupoid. Fix $g$ a Riemannian metric on $M$. We know observe that the pair groupoid is the submersion groupoid for the submersion $f\colon M\to \{\ast\}$. Moreover, the product metric $g\oplus g$ on $\G = M\times M$ is a $1$-metric. Since $M$ is compact, the pair groupoid $\G= M\times M\rightrightarrows M$ is a proper Hausdorff Lie groupoid. Thus by \cite[Theorem~4.13]{delHoyoFernandes2018} there exists a $2$-metric $\eta^{(2)}$ on $\G^{(2)}$ making the multiplication map $\m\colon (\G^{(2)},\eta^{(2)})\to (\G,g\oplus g)$, as well as the source and target maps $(\G^{(2)},\eta^{(2)})\rightrightarrows\G$ Riemannian submersions. From this example we see that the sectional curvature of a $1$-metric induced by a $2$-metric can be arbitrary.
\end{example}

\section{Cheeger-like deformation for Lie groupoid actions}\label{S: Cheeger like deformation for Lie groupoids actions}

In this section we give the construction on how to deform an action Lie groupoid  onto the orbit space in an analogous fashion to the Cheeger deformation procedure for Lie group actions.

Consider $\mu$ a given action of the proper Lie groupoid $\G\rightrightarrows M$ on $\alpha\colon P\to M$, and the action groupoid $\G\times_M P\rightrightarrows P$. Consider $\eta^P$ a $\mu$-transversely invariant Riemannian metric on $P$ (see \th\ref{T: existence of G->M invariant metrics on P->M}), and $\eta^{(1)}$ a $1$-metric on $\G\rightrightarrows M$ induced by a $2$-metric. Set 
\begin{equation}\label{EQ: Cheeger deformation metric}
\widehat{\eta}_{\varepsilon} = \frac{1}{\varepsilon}\eta^{(1)} + \eta^P
\end{equation}
on $\G\times P$, and restrict it to $\G\times_M P$.

\begin{lemma}
Consider $\mu$ and  action of the proper Lie groupoid $\G\rightrightarrows M$ on $\alpha\colon P\to M$, and the action groupoid $\G\times_M P \rightrightarrows P$. Denote by $\overline{s}\colon \G\times_M P\to P$, and $s\colon \G\to M$ the source maps of $\G\times_M \rightrightarrows P$ and $\G\rightrightarrows P$ respectively. Then $\overline{s}^{-1}(p) = s^{-1}(\alpha(p))\times\{p\}$.
\end{lemma}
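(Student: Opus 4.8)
The plan is to prove the identity by directly unwinding the definitions of the action groupoid and of its source map, since the statement is purely set-theoretic. First I would recall from the construction of the action Lie groupoid $\G\times_M P\rightrightarrows P$ that its space of arrows is the fibre product
\[
\G\times_M P = \{(g,q)\in \G\times P\mid s(g) = \alpha(q)\},
\]
and that its source map is $\overline{s}(g,q) = q$, as defined in the Action groupoid example. These are the only two facts needed.

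Next I would compute the fibre over a fixed $p\in P$ by imposing the two constraints that define membership in $\overline{s}^{-1}(p)$. A pair $(g,q)$ lies in $\overline{s}^{-1}(p)$ precisely when it belongs to $\G\times_M P$ \emph{and} satisfies $\overline{s}(g,q) = p$; the latter condition forces $q = p$, while the former requires $s(g) = \alpha(q)$. Substituting $q = p$ into the fibre-product condition yields $s(g) = \alpha(p)$, i.e.\ $g\in s^{-1}(\alpha(p))$. Hence
\[
\overline{s}^{-1}(p) = \{(g,p)\mid g\in s^{-1}(\alpha(p))\} = s^{-1}(\alpha(p))\times\{p\},
\]
which is the claimed equality. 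To be fully rigorous I would phrase this as a double inclusion: the computation above gives ``$\subseteq$'' directly, and conversely any $(g,p)$ with $g\in s^{-1}(\alpha(p))$ satisfies $s(g)=\alpha(p)$, so it lies in $\G\times_M P$, and clearly $\overline{s}(g,p)=p$, giving ``$\supseteq$.''

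I do not expect any genuine obstacle here: the lemma is a definitional unwinding, and the only point requiring a modicum of care is the implicit identification of $s^{-1}(\alpha(p))\times\{p\}$ as a subset of $\G\times P$ (rather than of $\G\times_M P$ abstractly), which is automatic since the constraint $s(g)=\alpha(p)$ is exactly what makes each such pair a valid arrow of the action groupoid. This lemma is presumably stated as a preparatory step so that, in the subsequent Cheeger-type deformation, the fibres of $\overline{s}$ can be identified with the $s$-fibres of $\G$ and the restriction of the metric $\widehat{\eta}_\varepsilon = \tfrac{1}{\varepsilon}\eta^{(1)}+\eta^P$ to these fibres can be analysed via the $1$-metric $\eta^{(1)}$ alone.
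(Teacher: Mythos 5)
Your proof is correct and follows exactly the same route as the paper: both unwind the definition of the action groupoid's source map $\overline{s}(g,q)=q$ together with the fibre-product condition $s(g)=\alpha(q)$ to identify $\overline{s}^{-1}(p)$ with $s^{-1}(\alpha(p))\times\{p\}$. Your explicit double-inclusion phrasing is merely a more careful write-up of the paper's one-line chain of set equalities.
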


\begin{proof}
Recall  that $\overline{s}(g,q) = q$. Thus:
\begin{linenomath}
\begin{align*}
\overline{s}^{-1}(p) = \{(g,p)\mid s(g) = \alpha(p) \} =\{(g,p)\mid g\in s^{-1}(\alpha(p))\}= s^{-1}(\alpha(p))\times \{p\}.
\end{align*}
\end{linenomath}
\end{proof}

From the previous theorem we get the following nice description of the kernel of the submersion $\overline{s}\colon \G\times_M P\to P$.  

\begin{cor}
Consider the action groupoid $\G\times_M P\rightrightarrows P$ of an action of the proper Lie groupoid $\G\rightrightarrows M$ on $\alpha\colon P\to M$. Then $\kernel D_{(g,p)}\overline{s} = \kernel D_g s\times \{0\}$.
\end{cor}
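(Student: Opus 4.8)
The plan is to compute the differential of $\overline{s}$ directly, by first identifying the tangent space of the fiber product $\G\times_M P$ and then observing that $\overline{s}$ is nothing but the restriction of the projection onto the second factor. First I would record that, since $s\colon \G\to M$ is a submersion, the fiber product $\G\times_M P = \{(g,p)\mid s(g)=\alpha(p)\}$ is a smooth submanifold of $\G\times P$ which is cut out transversally, and whose tangent space at $(g,p)$ is
\[
T_{(g,p)}(\G\times_M P) = \{(X,Y)\in T_g\G\times T_p P\mid D_g s(X) = D_p\alpha(Y)\}.
\]

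Next, since $\overline{s}(g,q)=q$ is the restriction to $\G\times_M P$ of the projection $\proj_2\colon \G\times P\to P$, its differential is simply $D_{(g,p)}\overline{s}(X,Y)=Y$. Therefore a tangent vector $(X,Y)$ lies in $\kernel D_{(g,p)}\overline{s}$ precisely when $Y=0$. Imposing $Y=0$ in the compatibility condition above forces $D_g s(X)=D_p\alpha(0)=0$, that is $X\in \kernel D_g s$, with no remaining constraint on $X$ beyond this. Combining the two observations yields
\[
\kernel D_{(g,p)}\overline{s} = \{(X,0)\mid X\in \kernel D_g s\} = \kernel D_g s\times\{0\},
\]
which is the claim.

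I do not anticipate any genuine obstacle here; the only point requiring a little care is the identification of the tangent space of the fiber product, but this is standard precisely because $s$ is a submersion and hence $\G\times_M P$ is transverse in $\G\times P$. In fact, this corollary can alternatively be read off immediately from the preceding lemma: since $\kernel D_{(g,p)}\overline{s}$ is the tangent space to the $\overline{s}$-fiber through $(g,p)$, and the lemma identifies that fiber as $\overline{s}^{-1}(p)=s^{-1}(\alpha(p))\times\{p\}$, we get $\kernel D_{(g,p)}\overline{s} = T_g\big(s^{-1}(\alpha(p))\big)\times\{0\} = \kernel D_g s\times\{0\}$, using once more that the tangent space to the fiber of the submersion $s$ is exactly $\kernel D_g s$. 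I would present the direct differential computation as the main argument and mention this second route as a remark.
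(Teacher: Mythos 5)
Your proposal is correct. Note that the paper gives no standalone proof of this corollary at all: it simply reads it off from the preceding lemma, exactly as in the ``second route'' you mention at the end, namely that $\overline{s}^{-1}(p)=s^{-1}(\alpha(p))\times\{p\}$ forces $\kernel D_{(g,p)}\overline{s}=T_g\bigl(s^{-1}(\alpha(p))\bigr)\times\{0\}=\kernel D_g s\times\{0\}$. Your primary argument is genuinely different and arguably more careful: by identifying $T_{(g,p)}(\G\times_M P)=\{(X,Y)\mid D_g s(X)=D_p\alpha(Y)\}$ and noting $\overline{s}=\proj_2|_{\G\times_M P}$, you obtain the kernel by pure linear algebra, without invoking the lemma and without needing to know in advance that $\overline{s}$ is a submersion. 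That last point is a small but real advantage: the lemma-based route identifies $\kernel D\overline{s}$ with the tangent space of the fiber, which requires $\overline{s}$ to have constant rank (for a general smooth map one only gets the containment of the fiber's tangent space in the kernel), whereas your computation needs no such input and in fact makes the explicit description of $T(\G\times_M P)$ available, which the paper later uses implicitly when discussing horizontal distributions. The trade-off is length: given that the lemma is already established, the paper's derivation is a one-line consequence, while yours redoes the transversality bookkeeping from scratch.
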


We now show that the source map $\overline{s}\colon \G\times_M P\to P$ of the action groupoid is a Riemannian submersion with respect to the metric $\widehat{\eta}_\varepsilon$.

\begin{thm}\th\label{T: source map for the action groupoid is a Riemannian submersion}
Consider $\G\times_M P\rightrightarrows P$, $\mu$ an Lie groupoid action of the proper Lie groupoid $\G\rightrightarrows M$ on $\alpha\colon P\to M$. Equip the metric $\widehat{\eta}_\varepsilon$ on $\G\times_M P$. Then $\overline{s}\colon (\G\times_M P,\widehat{\eta}_\varepsilon)\to P$ is a Riemannian submersion.
\end{thm}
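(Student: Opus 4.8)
The plan is to verify, pointwise at each $(g,p)\in\G\times_M P$, the definition of a Riemannian submersion given in Section~\ref{S: preliminaries}: I will exhibit the horizontal distribution of $\overline{s}$, check that $D\overline{s}$ restricts to a linear isomorphism from it onto $T_pP$, and confirm that the resulting inner product on $T_pP$ is independent of the chosen point of the fibre $\overline{s}^{-1}(p)$ (equivalently, that $\mathcal{L}_X\widehat{\eta}_\varepsilon=0$ for vertical $X$), so that $\widehat{\eta}_\varepsilon$ descends to a genuine Riemannian metric on $P$. The only input I need about $\eta^{(1)}$ is that, being a $1$-metric, the source map $s\colon(\G,\eta^{(1)})\to(M,\eta^{(0)})$ is itself a Riemannian submersion (\th\ref{P: 1-metrics induce 0-metrics}~(i)); neither the $2$-metric refinement nor the $\mu$-transverse invariance of $\eta^P$ enters here.

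First I would record the tangent space of the fibre product. Writing $\G\times_M P=F^{-1}(\Delta_M)$ for the map $F\colon\G\times P\to M\times M$, $F(g,p)=(s(g),\alpha(p))$, and using that $s$ is a submersion, $F$ is transverse to the diagonal $\Delta_M$, so $\G\times_M P$ is a submanifold with
\[
T_{(g,p)}(\G\times_M P)=\{(X,V)\in T_g\G\oplus T_pP\mid D_gs(X)=D_p\alpha(V)\},
\]
and $\widehat{\eta}_\varepsilon$ is the restriction of $\tfrac1\varepsilon\eta^{(1)}\oplus\eta^P$. By the Corollary preceding the statement, the vertical space is $\ver_{(g,p)}=\kernel D_{(g,p)}\overline{s}=\kernel D_gs\times\{0\}$. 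Since pairing $(X,V)$ against a vertical vector $(Y,0)$ under $\widehat{\eta}_\varepsilon$ gives $\tfrac1\varepsilon\eta^{(1)}(X,Y)$, computing the $\widehat{\eta}_\varepsilon$-orthogonal complement of $\ver_{(g,p)}$ inside $T_{(g,p)}(\G\times_M P)$ yields
\[
\hor_{(g,p)}=\{(X,V)\mid X\in(\kernel D_gs)^{\perp_{\eta^{(1)}}},\ D_gs(X)=D_p\alpha(V)\}.
\]

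Next I would establish the isometry property. Given $V\in T_pP$, the Riemannian submersion $s$ provides a unique $\eta^{(1)}$-horizontal lift $\widetilde{X}\in(\kernel D_gs)^{\perp_{\eta^{(1)}}}$ of $D_p\alpha(V)\in T_{\alpha(p)}M$; then $(\widetilde{X},V)$ is the unique horizontal lift of $V$, so $D\overline{s}|_{\hor_{(g,p)}}\colon\hor_{(g,p)}\to T_pP$ is a linear isomorphism (and, $s$ being surjective, $\overline{s}$ is onto $P$, hence a submersion). For $V,W\in T_pP$ with horizontal lifts $(\widetilde{X},V),(\widetilde{X}',W)$ the defining isometry of $s$ on horizontal vectors gives
\[
\widehat{\eta}_\varepsilon\big((\widetilde{X},V),(\widetilde{X}',W)\big)=\tfrac1\varepsilon\eta^{(1)}(\widetilde{X},\widetilde{X}')+\eta^P(V,W)=\tfrac1\varepsilon\eta^{(0)}\big(D_p\alpha(V),D_p\alpha(W)\big)+\eta^P(V,W).
\]

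I would finish by observing that the right-hand side depends only on $p$ (through $D_p\alpha$), and not on the chosen arrow $g\in s^{-1}(\alpha(p))$ parametrising the fibre $\overline{s}^{-1}(p)=s^{-1}(\alpha(p))\times\{p\}$. Hence the horizontal inner product is constant along the fibres, so $\widehat{\eta}_\varepsilon$ is constant along the fibres and descends to the Riemannian metric
\[
\tilde h=\tfrac1\varepsilon\,\alpha^\ast\eta^{(0)}+\eta^P
\]
on $P$, which is positive definite because $\eta^P$ is and $\alpha^\ast\eta^{(0)}$ is positive semidefinite, making $\overline{s}\colon(\G\times_M P,\widehat{\eta}_\varepsilon)\to(P,\tilde h)$ a Riemannian submersion. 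The one genuinely load-bearing step is precisely this fibrewise independence: it is where the hypothesis that $\eta^{(1)}$ is a $1$-metric (so that $s$ is a Riemannian submersion, and not merely a submersion) is indispensable, since it guarantees that the induced length of a horizontal lift is the same over every arrow lying above $\alpha(p)$.
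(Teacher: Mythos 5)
Your proof is correct, and it reaches the conclusion by a genuinely different mechanism than the paper, although both arguments rest on the same two inputs: the identification $\kernel D_{(g,p)}\overline{s}=\kernel D_g s\times\{0\}$ (the unlabelled corollary preceding the statement) and the fact that a $1$-metric makes $s\colon(\G,\eta^{(1)})\to(M,\eta^{(0)})$ a Riemannian submersion (\ref{P: 1-metrics induce 0-metrics}). From there the paper verifies the invariance criterion of \cite[Theorem~1.2.1]{GromollWalschap}: it takes a vertical vector $(X,0)$ and two $\widehat{\eta}_\varepsilon$-horizontal vectors, observes (as you do) that orthogonality to the vertical space forces the $\G$-components to be $s$-horizontal, and then shows the Lie derivative $\Lie_{(X,0)}\widehat{\eta}_\varepsilon$ vanishes on such horizontal pairs because $s$ is Riemannian. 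You instead verify the definition directly: you exhibit the horizontal distribution, build the unique horizontal lift $(\widetilde X,V)$ of each $V\in T_pP$ from the $s$-horizontal lift of $D_p\alpha(V)$, and compute the pushed-forward inner product in closed form, $\tilde h=\tfrac1\varepsilon\,\alpha^\ast\eta^{(0)}+\eta^P$, which is manifestly independent of the arrow $g\in s^{-1}(\alpha(p))$. Your route is more elementary (no Lie-derivative criterion, hence no need to extend vectors to vector fields or to justify splitting $\Lie_{(X,0)}$ across the product factors), and it buys an explicit formula for the descended metric that the paper never writes down; that formula makes transparent that the metric induced by $\overline{s}$ does not shrink the orbits, so the Cheeger deformation proper must be read off from the target map $\overline{t}$ (\ref{T: contraction of groupoid induces cheeger deformation}) rather than from $\overline{s}$ as the wording of Definition~\ref{D: Cheeger deformation} suggests. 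The paper's route is shorter and reuses the same criterion it later applies to $\overline{t}$. Your observation that neither the $2$-metric refinement nor the transverse $\mu$-invariance of $\eta^P$ plays any role here is also accurate: both proofs use only the $1$-metric property of $\eta^{(1)}$.
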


\begin{proof}
Consider $(X,0)\in \kernel D\overline{s}$, and set $(\kernel D\overline{s})^\perp$ the orthogonal complement to $\kernel D\overline{s}$ with respect to $\eta_\varepsilon$. Consider $(U_1,A_1),(U_2,A_2)\in (\kernel D\overline{s})^\perp$. This implies that for $(Y,0)\in \kernel D\overline{s} = \kernel Ds\times \{0\}$, and $i=1,2$ we have
\[
0 = \widehat{\eta}_\varepsilon \big((Y,0),(U_i,A_i)\big) = \frac{1}{\varepsilon}\eta^{(1)}(Y,U_i)+\eta^{(P)}(0,A_i) = \frac{1}{\varepsilon}\eta^{(1)}(Y,U_i).
\]
From this it follows that $U_1,\: U_2\in (\kernel Ds)^\perp$, where $(\kernel Ds)^\perp$ is the orthogonal complement of $\kernel Ds$ with respect to $\eta^{(1)}$. Then, since $s\colon (\G,\eta^{(1)})\to M$ is a Riemannian submersion we obtain
\begin{linenomath}
\begin{align*}
\Lie_{(X,0)}\left(\frac{1}{\varepsilon}\eta^{(1)}+\eta^P\right )\big((U_1,A_1),(U_2,A_2)\big) =& \frac{1}{\varepsilon}\Lie_{X}(\eta^{(1)})(U_1,U_2)+\Lie_0(\eta^P)(A_1,A_2)\\
=&\frac{1}{\varepsilon}\Lie_{X}(\eta^{(1)})(U_1,U_2)\\
=&0.
\end{align*}
\end{linenomath}
Thus we conclude by \cite[Theorem~1.2.1]{GromollWalschap}, that $\overline{s}$ is a Riemannian submersion with respect to $\widehat{\eta}_\varepsilon$. 
\end{proof}

We know give a description of the fibers of the target map of an action groupoid.

\begin{thm}
Consider $\mu$ a Lie groupoid action of the proper Lie groupoid $\G\rightrightarrows M$ on $\alpha\colon P\to M$, and $\G\times_M P\rightrightarrows P$ the action groupoid. Then for $p\in P$
\[
\overline{t}^{-1}(p) = \{(g,\mu(i(g),p))\mid g\in t^{-1}(\alpha(p))\} = t^{-1}(\alpha(p))\times L_p,
\]
where $L_p\subset P$ is the orbit of the action groupoid $\G\times_M P\rightrightarrows P$ that contains $p$.
\end{thm}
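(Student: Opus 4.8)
The plan is to unwind the condition $\overline{t}(g,q)=\mu(g,q)=p$ for an arrow $(g,q)\in\G\times_M P$ and show that it pins down $g$ (up to lying in $t^{-1}(\alpha(p))$) and then determines $q$ completely in terms of $g$, mirroring the computation of $\overline{s}^{-1}(p)$ in the preceding corollary but now using the groupoid inverse instead of the source fibre. First I would record that any $(g,q)\in\overline{t}^{-1}(p)$ satisfies $t(g)=\alpha(p)$: this is immediate from action axiom (i), since $\alpha(\mu(g,q))=t(g)$ while $\mu(g,q)=p$. Hence $g\in t^{-1}(\alpha(p))$, and dually $i(g)\in s^{-1}(\alpha(p))$ because $s(i(g))=t(g)=\alpha(p)$.

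Next I would show that $q$ is forced. Applying $\mu(i(g),\cdot)$ to $\mu(g,q)=p$ and using action axiom (ii), the groupoid inverse identity $\m(i(g),g)=\1_{s(g)}$, and the unit axiom (iii) gives
\begin{linenomath}
\begin{align*}
\mu(i(g),p)=\mu(i(g),\mu(g,q))=\mu(\m(i(g),g),q)=\mu(\1_{s(g)},q)=q,
\end{align*}
\end{linenomath}
where the last step is legitimate since $s(g)=\alpha(q)$ as $(g,q)$ is a genuine arrow, and the middle composition is defined because $s(i(g))=t(g)=\alpha(\mu(g,q))$. This proves the inclusion $\overline{t}^{-1}(p)\subseteq\{(g,\mu(i(g),p))\mid g\in t^{-1}(\alpha(p))\}$. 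For the reverse inclusion I would fix $g\in t^{-1}(\alpha(p))$ and verify that $(g,\mu(i(g),p))$ is a well-defined arrow mapping to $p$: the point $\mu(i(g),p)$ is defined since $s(i(g))=t(g)=\alpha(p)$, it satisfies $\alpha(\mu(i(g),p))=t(i(g))=s(g)$ so that $(g,\mu(i(g),p))\in\G\times_M P$, and $\mu(g,\mu(i(g),p))=\mu(\m(g,i(g)),p)=\mu(\1_{t(g)},p)=\mu(\1_{\alpha(p)},p)=p$ by the same three axioms. This establishes the first equality.

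For the second description I would invoke that the inversion $i$ is a diffeomorphism of $\G$ restricting to a bijection $t^{-1}(\alpha(p))\to s^{-1}(\alpha(p))$. Hence, as $g$ ranges over $t^{-1}(\alpha(p))$ the element $i(g)$ ranges over all of $s^{-1}(\alpha(p))$, so the second coordinates sweep out exactly $\{\mu(k,p)\mid k\in s^{-1}(\alpha(p))\}=L_p$, which is precisely the defining description of the orbit of the action groupoid through $p$. The assignment $g\mapsto(g,\mu(i(g),p))$ then exhibits $\overline{t}^{-1}(p)$ as the graph over $t^{-1}(\alpha(p))$ whose image under the second projection is $L_p$, which is how I would read the identification with $t^{-1}(\alpha(p))\times L_p$.

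The step I expect to demand the most care is exactly this second equality. The target fibre is genuinely a graph diffeomorphic to $t^{-1}(\alpha(p))$ (compare the group case, where $\overline{t}^{-1}(p)=\{(g,\mu(i(g),p))\mid g\in G\}\cong G$, not $G\times L_p$), and its dimension is strictly smaller than that of the Cartesian product whenever $L_p$ is positive-dimensional. I would therefore justify the right-hand side as the parametrization by the first factor together with surjectivity of the second projection onto $L_p$ via inversion, rather than as a literal set-theoretic product; once the domains of composability are tracked, every algebraic manipulation above is routine, so the only real subtlety is this bookkeeping and the interpretation of the final identification.
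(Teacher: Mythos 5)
Your proposal is correct and follows essentially the same route as the paper's own proof: the same axiom-(i)/(ii)/(iii) manipulations with $\m(g,i(g))=\1_{t(g)}$ and $\m(i(g),g)=\1_{s(g)}$ give both inclusions for the first equality (you are in fact slightly more careful, since the paper never explicitly notes that $t(g)=\alpha(p)$ for $(g,q)\in\overline{t}^{-1}(p)$, which is needed to conclude the reverse inclusion), and the same inversion argument (the paper uses $i(i(g))=g$ and the substitution $h=i(g)$; you phrase it as $i$ being a bijection $t^{-1}(\alpha(p))\to s^{-1}(\alpha(p))$) identifies the set of second coordinates with $L_p$. Your closing caveat is also well taken: the paper's proof only establishes the graph description together with surjectivity of the second projection onto $L_p$, and then writes $t^{-1}(\alpha(p))\times L_p$ without comment, so that final ``equality'' must indeed be read as the parametrization you describe rather than as a literal Cartesian product, whose dimension would be too large whenever $L_p$ is positive-dimensional.
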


\begin{proof}
Recall that for $(g,p)\in\G\times_M P$ we have $\overline{t}(g,p) = \mu(g,p)$. Observe that \linebreak$\alpha(\mu(i(g),p)) = t(i(g)) = s(g)$, and thus $(g,\mu(i(g),p))\in \G\times_M P$. 

Consider $g\in t^{-1}(\alpha(p))$, then 
\begin{linenomath}
\begin{align*}
\overline{t}\big(g,\mu(i(g),p)\big) = \mu\big(g, \mu(i(g),p)\big) = \mu\big(\m(g,i(g)),p\big) = \mu(\1_{t(g)},p)= \mu(\1_{\alpha(p)},p) = p.
\end{align*}
\end{linenomath}
Thus $\{(g,\mu(i(g),p))\mid g\in t^{-1}(\alpha(p))\}\subset \overline{t}^{-1}(p)$.

Now consider $(g,q)\in \overline{t}^{-1}(p)$. That is $s(g) = \alpha(q)$, and $\mu(g,q) = p$. So we have $\mu(i(g),p) = \mu(i(g),\mu(g,q))$, but observe that  
\[
\mu(i(g),\mu(g,q))=\mu\big(\mu(i(g),g),p\big)= \mu(\1_{s(g)},q) = q.
\] 
This implies that $(g,q) = (g,\mu(i(g),p))$, and thus 
\[
\overline{t}^{-1}(p) = \{(g,\mu(i(g),p))\mid g\in t^{-1}(\alpha(p))\}.
\]

Last we observe that the set $\{\mu(i(g),p)\mid g\in t^{-1}(\alpha(p))\} = \Or_\G(p) = \overline{t}(\overline{s}^{-1}(p))$:
\[
\overline{t}(\overline{s}^{-1}(p)) = \overline{t}(s^{-1}(\alpha(p))\times\{p\}) = \{\mu(g,p)\mid g\in s^{-1}(\alpha(p))\}.
\]
Now observe that $i(i(g)) = g$; this follows for example from the uniqueness of the inverses (see \cite[Ex. 1.2, p. 9]{Meinrenken2017}, and  use the fact that the isotropy groups are Lie groups). Thus writing $h = i(g)$ we have that $t(h) = t(i(g)) = s(g) = \alpha(p)$ and 
\[
\{\mu(g,p)\mid g\in s^{-1}(\alpha(p))\} = \{\mu(i(h),p)\mid h\in t^{-1}(\alpha(p))\}.
\]
So we conclude that 
\[
\overline{t}^{-1}(p) = \{(g,\mu(i(g),p))\mid g\in t^{-1}(\alpha(p))\} = t^{-1}(\alpha(p))\times L_p.
\]
\end{proof}

With the previous description of the fibers of the submersion $\overline{t}\colon \G\times_M P\rightrightarrows P$ we get the following description for the $\overline{t}$-vertical space $\kernel D\overline{t}$.

\begin{cor}\th\label{C: Kernel of bar(t)}
Consider  $\G\times_M P\rightrightarrows P$ the action groupoid of a Lie groupoid  action of the proper Lie groupoid $\G\rightrightarrows M$ on $\alpha\colon P\to M$. Then
\[
\kernel D_{(g,p)}\overline{t} = \kernel D_gt \times T_{p}L_p,
\]
where $L_p\subset P$ is the orbit of the action groupoid $\G\times_M P\rightrightarrows P$ containing $p$.
\end{cor}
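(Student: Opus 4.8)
The plan is to identify $\kernel D_{(g,p)}\overline{t}$ with the tangent space of a single fibre of $\overline{t}$ and then to differentiate the explicit product description of that fibre furnished by the preceding theorem; this is the exact analogue of the way $\kernel D\overline{s}=\kernel D_g s\times\{0\}$ was read off from the fibres of $\overline{s}$.

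First I would record that $\overline{t}$, being the target map of the action Lie groupoid $\G\times_M P\rightrightarrows P$, is a surjective submersion. Hence every fibre $\overline{t}^{-1}(r)$ is an embedded submanifold and, at each of its points, its tangent space is exactly $\kernel D\overline{t}$. Writing $r=\overline{t}(g,p)=\mu(g,p)$, it therefore suffices to compute $T_{(g,p)}\big(\overline{t}^{-1}(r)\big)$.

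Next I would apply the preceding theorem to the base point $r$, which gives $\overline{t}^{-1}(r)=t^{-1}(\alpha(r))\times L_r$. Since $(g,p)\in\G\times_M P$ forces $s(g)=\alpha(p)$, one has $\alpha(r)=\alpha(\mu(g,p))=t(g)$, so $g\in t^{-1}(\alpha(r))$; moreover $r=\mu(g,p)$ lies in the orbit of $p$, so $L_r=L_p$ and $p\in L_r$. Thus $(g,p)$ lies in the product $t^{-1}(t(g))\times L_p$, and taking tangent spaces factorwise via $T_{(x,y)}(A\times B)=T_xA\times T_yB$ yields
\[
\kernel D_{(g,p)}\overline{t}=T_g\big(t^{-1}(\alpha(r))\big)\times T_pL_p.
\]
Finally, because $t\colon\G\to M$ is itself a submersion, the tangent space at $g$ to its fibre $t^{-1}(t(g))$ is precisely $\kernel D_g t$, and the claimed identity $\kernel D_{(g,p)}\overline{t}=\kernel D_g t\times T_pL_p$ follows.

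There is essentially no obstacle beyond bookkeeping: the genuine content has already been carried out in the preceding theorem, and the two points requiring a little care are that it is the submersivity of $\overline{t}$ that lets one replace the kernel by the tangent space of the fibre, and that it is the product structure of that fibre (together with the submersivity of $t$, which identifies $T_g\, t^{-1}(t(g))$ with $\kernel D_g t$) that legitimises splitting the tangent space as $\kernel D_g t\times T_pL_p$. One must only be attentive to transporting the fibre description from the base point $p$ to the correct base point $r=\mu(g,p)$, since $\alpha(p)=s(g)$ while $\alpha(r)=t(g)$.
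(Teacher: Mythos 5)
Your reduction---replace $\kernel D_{(g,p)}\overline{t}$ by the tangent space of the fibre $\overline{t}^{-1}(r)$, $r=\mu(g,p)$, invoke the preceding theorem, and keep track of the base-point change $\alpha(r)=t(g)$, $L_r=L_p$---is exactly the paper's (implicit) argument: the paper offers no separate proof, only the remark that the description of the fibres yields the description of the vertical space. However, the one step your proof actually leans on, ``taking tangent spaces factorwise via $T_{(x,y)}(A\times B)=T_xA\times T_yB$'', fails, because the fibre is \emph{not} a Cartesian product. The genuine content of the preceding theorem is the graph description
\[
\overline{t}^{-1}(r)=\{(g,\mu(i(g),r))\mid g\in t^{-1}(\alpha(r))\},
\]
in which the second coordinate is a function of the first; the further equality with $t^{-1}(\alpha(r))\times L_r$ is an abuse of notation, and it cannot be literally true. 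Indeed, $\overline{t}$ is a submersion from $\G\times_M P$, a manifold of dimension $\dim\G+\dim P-\dim M$, onto $P$, so every fibre and every kernel $\kernel D_{(g,p)}\overline{t}$ has dimension $\dim\G-\dim M=\dim\kernel D_g t$, whereas $\kernel D_g t\times T_pL_p$ has dimension $\dim\kernel D_gt+\dim L_p$. Already for a Lie group action ($\G=G\rightrightarrows\{\ast\}$, $P=M$) the fibre is $\{(g,\mu(g^{-1},p))\mid g\in G\}\cong G$, not $G\times L_p$. So the product identity you derive is false whenever the orbit has positive dimension; your argument ``succeeds'' only because it takes the product sign in the theorem at face value instead of using what that theorem actually proves.

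The statement that is true---and the one the paper actually uses afterwards, see \th\ref{L: Vertical space of target map of action Lie groupoid}---is the graph form of the corollary. The map $\psi(h)=(h,\phi(h))$ with $\phi(h)=\mu(i(h),r)$ is a section of the first projection mapping $t^{-1}(\alpha(r))$ onto the fibre, hence (by the dimension count above) a diffeomorphism onto it, and differentiating gives
\[
\kernel D_{(g,p)}\overline{t}=\{(x,D_g\phi(x))\mid x\in\kernel D_g t\},
\]
a subspace whose first-factor projection is an isomorphism onto $\kernel D_gt$ and whose second-factor projection is onto $T_pL_p$; the notation $\kernel D_gt\times T_pL_p$ should be read as recording these two projections, not as a direct sum. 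To repair your proof, keep the first step (kernel equals tangent space of the fibre, by submersivity of $\overline{t}$) but replace the factorwise step by this differentiation of the graph parametrisation; as written, the factorwise step produces a space of the wrong dimension.
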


From this description of $\kernel D \overline{t}$, we deduce that $\bar{t}$ is a Riemannian submersion with respect to $\widehat{\eta}_\varepsilon$.

\begin{thm}[Proof of \th\ref{MT: deformation}]\th\label{T: contraction of groupoid induces cheeger deformation}
Consider $\G\times_M P\rightrightarrows P$ the action groupoid of a Lie groupoid  action $\mu$ of the proper Lie groupoid $\G\rightrightarrows M$ on $\alpha\colon P\to M$. Equip the metric $\widehat{\eta}_\varepsilon$ on $\G\times_M P$. Then $\overline{t}\colon (\G\times_M P,\widehat{\eta}_\varepsilon)\to P$ is a Riemannian submersion.
\end{thm}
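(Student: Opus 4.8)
The plan is to verify the bundle-like criterion for Riemannian submersions of \cite[Theorem~1.2.1]{GromollWalschap} directly, exactly as in the proof of \th\ref{T: source map for the action groupoid is a Riemannian submersion}, but now feeding in the description of the vertical distribution from \th\ref{C: Kernel of bar(t)}. So I would take a vertical field $V$ for $\overline t$, which by \th\ref{C: Kernel of bar(t)} has its $\G$--component in $\kernel Dt$ and its $P$--component tangent to the orbit $L_p$, together with two $\overline t$--horizontal fields $H_1,H_2$, and show that $(\Lie_V\widehat\eta_\varepsilon)(H_1,H_2)=0$. Since $\widehat\eta_\varepsilon=\tfrac1\varepsilon\eta^{(1)}\oplus\eta^P$ is a product metric on $\G\times P$ restricted to $\G\times_M P$, the Lie derivative splits as $\tfrac1\varepsilon\Lie_V\eta^{(1)}+\Lie_V\eta^P$, the first summand seeing only the $\kernel Dt$--component of $V$ and the second only its $L_p$--tangent component.

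For the $\G$--summand I would invoke that, because $\eta^{(1)}$ is a $1$--metric, the target $t\colon(\G,\eta^{(1)})\to M$ is a Riemannian submersion (\th\ref{P: 1-metrics induce 0-metrics}); equivalently, left and right multiplication act by isometries transverse to the $t$--fibres, so the associated Lie derivative vanishes on $t$--horizontal directions. For the $P$--summand I would invoke \th\ref{D: mu-transversly invariant}: transverse $\mu$--invariance says precisely that sliding along an orbit via $\mu$ is governed by the normal representation $N(\mu)$, which acts by isometries on $\nu(L_p)$, so $\Lie_V\eta^P$ vanishes on vectors normal to the orbit. Conceptually this is the same mechanism as in the classical case of Section~\ref{S: Cheeger Deformations}: the fibres of $\overline t$ are the orbits of the auxiliary action $k\cdot(g,p)=(\m(g,i(k)),\mu(k,p))$ of $\G$ on $\G\times_M P$ (the groupoid analogue of $\overline\mu$), under which $\widehat\eta_\varepsilon$ is transversely invariant, and $\overline t$ is the corresponding orbit map; note also the bookkeeping identity $\overline t=\overline s\circ\overline i$ with $\overline i$ the groupoid inversion.

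The main obstacle is that, unlike in \th\ref{T: source map for the action groupoid is a Riemannian submersion}, the vertical distribution of $\overline t$ is \emph{not} a product: by \th\ref{C: Kernel of bar(t)} it is the subspace of $\kernel Dt\times T_pL_p$ cut out by the fibre--product tangency $Ds=D\alpha$ (equivalently by $D\mu(V)=0$), so the $\G$-- and $P$--components of a horizontal vector are coupled and neither is, by itself, horizontal in its own factor. Hence the two summands above cannot be killed termwise from transverse invariance alone; one must show they combine to zero on genuinely $\overline t$--horizontal vectors. This is exactly where the hypothesis that $\eta^{(1)}$ is induced by a $2$--metric becomes essential rather than cosmetic: the $2$--metric forces $\m$ to be a Riemannian submersion compatibly with $s$ and $t$ (the analogue of bi-invariance, cf.\ \th\ref{R: 2-metrics on Lie groups}), supplying the extra rigidity needed to control the Lie derivative along the multiplication, i.e.\ the $t$--vertical, directions that occur in the $\G$--component of $V$. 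I therefore expect the crux to be verifying transverse invariance of $\widehat\eta_\varepsilon$ under the auxiliary action along $\G\times_M P$, reconciling the $1$--metric/$2$--metric structure on the $\G$--side with the transverse $\mu$--invariance on the $P$--side.
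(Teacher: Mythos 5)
Your overall route is the same as the paper's: check the bundle-like criterion of \cite[Theorem~1.2.1]{GromollWalschap} for $\widehat\eta_\varepsilon=\frac{1}{\varepsilon}\eta^{(1)}\oplus\eta^P$, split $\Lie_V\widehat\eta_\varepsilon$ into its $\eta^{(1)}$- and $\eta^P$-summands, and kill the first by the $1$-metric property of $t\colon(\G,\eta^{(1)})\to M$ and the second by transverse $\mu$-invariance (equidistance of the orbits); this is exactly the computation in the paper's proof, performed there on the decompositions $\kernel D\overline t=\kernel Dt\times T_pL_p$ and $(\kernel D\overline t)^\perp=(\kernel Dt)^\perp\times\nu(L_p)$ of \th\ref{C: Kernel of bar(t)}. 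But your proposal is not a proof: after setting up this scheme you declare the coupling of the $\G$- and $P$-components to be ``the main obstacle'', conjecture that the $2$-metric hypothesis supplies ``the extra rigidity'', and close by stating what you ``expect the crux to be''. The vanishing of $(\Lie_V\widehat\eta_\varepsilon)(H_1,H_2)$ on genuine $\overline t$-horizontal vectors --- the entire content of the theorem --- is never established, and the $2$-metric never enters any actual computation. That missing step is a genuine gap.

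It is only fair to add that the obstacle you raise is real, and that it sits precisely where the paper's own proof is defective, so you have not overlooked a resolution that the paper actually supplies. The fibre $\overline t^{-1}(p)$ is the graph $\{(g,\mu(i(g),p))\mid g\in t^{-1}(\alpha(p))\}$ of the map $g\mapsto\mu(i(g),p)$, hence has dimension $\dim\G-\dim M$, whereas $t^{-1}(\alpha(p))\times L_p$ has dimension $\dim\G-\dim M+\dim L_p$; thus \th\ref{C: Kernel of bar(t)}, and with it the product form of the horizontal complement used in the paper's proof, fails whenever the orbits are positive-dimensional (already for the canonical action of the pair groupoid $M\times M\rightrightarrows M$ on $\Id_M$, where $\kernel D\overline t$ has dimension $\dim M$ but $\kernel Dt\times T_pL_p$ has dimension $2\dim M$). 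Consequently the termwise cancellation --- yours up to the point where you abandon it, and the paper's as written --- only verifies the criterion against product-type vectors $(U,A)$ with $U$ $t$-horizontal and $A$ normal to $L_p$, which form a proper subspace of the true horizontal space once $\dim L_p>0$; on the coupled horizontal vectors, such as $(-\varepsilon\Sh(p)(x),X^\ast(p)+\xi)$ appearing in \th\ref{T: bar(t)-Horizontal distribution}, the two Lie-derivative terms are separately nonzero and must be shown to cancel. (One correction to your description: the tangency condition $Ds=D\alpha$ alone does not cut $\kernel D\overline t$ out of $\kernel Dt\times T_pL_p$ when the isotropy orbits are positive-dimensional; the verticality condition $D\mu=0$ is strictly stronger.) To complete either your argument or the paper's, one must carry out what you only announce: prove that $\widehat\eta_\varepsilon$ is transversely invariant along the fibres of $\overline t$, i.e.\ along the orbits of your auxiliary action $k\cdot(g,p)=(\m(g,i(k)),\mu(k,p))$, and this is where the multiplicativity encoded in the $2$-metric would have to be used concretely.
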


\begin{proof}
We recall that for the partition $\fol_P = \{L_p\mid p\in P\}$ of $P$ given by the orbits of $\G\times_M P\rightrightarrows P$, the Riemannian metric $\eta^P$ gives a singular Riemannian foliation $(P,\eta^P,\fol_P)$.

We consider $(X,W)\in \kernel D_gt\times T_pL_p$. Also we consider the normal bundle $\nu(L_p)\to L_p$ of $L_p$ using the metric $\eta^{P}$, and denote by $(\eta^P)_p^\perp = \eta^P_p|_{T L_p^\perp}$ the Riemannian metric given by the restriction of $\eta^P$ to the normal spaces of the $\G \times_M P$-orbits.We now denote by $(\kernel Dt)^\perp$ the orthogonal complement of $\kernel Dt$ with respect to $\eta^{(1)}$, and by  $(\eta^{(1)})^\perp$ the restriction of $\eta^{(1)}$ to $(\kernel Dt)^\perp$. 

We observe that along $W\in TL_p$, the Riemannian metric $(\eta^P)^\perp$ is invariant: i.e. \linebreak$\Lie_W((\eta^P)^\perp) = 0$, since the $\G\times_M P$-orbits in $P$ are locally $\eta^P$-equidistant. Also we observe, that since $t\colon (\G,\eta^{(1)})\to M$ is a Riemannian submersion, we have $\Lie_X((\eta^{(1)})^\perp) =0$. By observing that for the $\widehat{\eta}_\varepsilon$-orthogonal complement of $(\kernel D\overline{t})$, we have $(\kernel D\overline{t})^\perp = (\kernel Dt)^\perp \times \nu(L_p)$, it follows that 
\[
\Lie_{(X,W)}(\widehat{\eta}_\varepsilon)^\perp = \frac{1}{\varepsilon}\Lie_X((\eta^{(1)})^\perp)+\Lie_W ((\eta^{(P)})^\perp) = 0.
\]
Thus we conclude by \cite[Theorem~1.2.1]{GromollWalschap} that $\overline{t}\colon (\G\times_M, \widehat{\eta}_\varepsilon)\to P$ is a Riemannian submersion.
\end{proof}

\begin{definition}\label{D: Cheeger deformation}
Given $\G\rightrightarrows M$ a proper Lie groupoid acting on \linebreak$\alpha\colon  P\to M$ via $\mu$, an $\eta^{(1)}$ a $1$-metric on $\G$ induced by some $2$-metric, and a $\mu$-transversaly invariant metric $\eta^P$ on $P$, we define the \emph{Cheeger deformation} of $\eta^P$ with respect to $\eta^{(1)}$ as the Riemannian metric $\eta_\varepsilon$ on $P$ induced by the Riemannian submersion $(\G\times_{M} P,\widehat{\eta}_\varepsilon)\to P$ as in \th\ref{T: source map for the action groupoid is a Riemannian submersion}.
\end{definition}

\subsection{Horizontal distribution} Given the Riemannian submersion $\overline{t}\colon (\G\times_M P,\eta^{(1)}+\eta^P)\to P$, we consider the vertical distribution $\V\colon \G\times_M P\to T ( \G\times_M P)$, given by $\V(g,p) = \kernel D_{(g,p)}\overline{t}$.

From now for $p\in P$ and  $X\in \kernel D_{\1_{\alpha(p)}}s\subset T_{\1_{\alpha(p)}}\G$, we denote by $X^\ast(p)\in T_p P$ the vector given by 
\[
	X^\ast(p) = -D_{(\1_{\alpha(p)},p)}\,\overline{t}(X,0).
\]

We define the \emph{Orbit tensor} $\Sh(p)\colon \kernel D_{\1_{\alpha(p)}}s\to \kernel D_{\1_{\alpha(p)}}s$ by the following relationship:
\begin{linenomath}
\begin{align*}
\eta^{(1)}(\Sh(p)(x),y) =& \eta^P(D_{(\1_{\alpha(p)},p)}\overline{t}(X,0),D_{(\1_{\alpha(p)},p)}\overline{t}(Y,0))\\
 =& \eta^P (X^\ast (p),Y^\ast (p)).
\end{align*}
\end{linenomath}
	
We give an alternative description of $\kernel D\overline{t}$, the vertical space of $\overline{t}$, that is similar to the one presented in \cite[Lemma~3.2]{Mueter}.

\begin{lemma}\th\label{L: Vertical space of target map of action Lie groupoid}
Let the proper Lie groupoid $\G\rightrightarrows M$ act effectively and smoothly on $\alpha\colon P\to M$. Then we have 
\[
	\V(\1_{\alpha(p)},p) = \{(x,X^\ast(p))\mid x\in \kernel D_{\1_{\alpha(p)}} s\}.
\]
\end{lemma}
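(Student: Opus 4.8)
The plan is to compute the differential of $\overline{t}$ at the unit $(\1_{\alpha(p)},p)$ directly, using the splitting provided by the unit section of the action groupoid, and then read off its kernel; an alternative route through \th\ref{C: Kernel of bar(t)} serves as a cross-check. Write $a=\alpha(p)$. First I would record that the unit of $\G\times_M P\rightrightarrows P$ is $\overline{\1}(q)=(\1_{\alpha(q)},q)$, so $\overline{t}\circ\overline{\1}=\Id_P$ because $\mu(\1_{\alpha(q)},q)=q$; differentiating this at $p$ gives $D_{(\1_a,p)}\overline{t}\circ D_p\overline{\1}=\Id_{T_pP}$, which shows $(\1_a,p)$ is a regular point of $\overline{t}$ and splits off the $P$-directions. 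I would also use the splitting $T_{\1_a}\G=D\1(T_aM)\oplus\kernel D_{\1_a}s$ coming from $s\circ\1=\Id_M$.

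The core step is the following computation. Take $(x,w)\in T_{(\1_a,p)}(\G\times_M P)$, so that $D_{\1_a}s(x)=D_p\alpha(w)$, and decompose $x=D\1(\xi)+X$ with $\xi\in T_aM$ and $X\in\kernel D_{\1_a}s$. Applying $D_{\1_a}s$ forces $\xi=D_p\alpha(w)$, whence $(x,w)=(X,0)+D_p\overline{\1}(w)$ inside the fibre product. Applying $D\overline{t}$ and using $D\overline{t}(X,0)=-X^\ast(p)$ (the definition of the action vector) together with $D\overline{t}\circ D\overline{\1}=\Id$, I get $D_{(\1_a,p)}\overline{t}(x,w)=w-X^\ast(p)$. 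Thus $(x,w)$ is vertical exactly when $w=X^\ast(p)$, where $X$ is the $\kernel D_{\1_a}s$-component of $x$; this exhibits $\V(\1_a,p)$ as parametrized by $X\in\kernel D_{\1_a}s$ through the action vectors $X^\ast(p)$, which is the asserted description.

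For a consistency check I would recover this from \th\ref{C: Kernel of bar(t)}: the $P$-component $X^\ast(p)$ sweeps out $T_pL_p=D_{(\1_a,p)}\overline{t}(\kernel D\overline{s})$, using that $\kernel D\overline{s}=\kernel Ds\times\{0\}$ was established earlier, while the $\G$-component lies in $\kernel Dt$ as in \th\ref{C: Kernel of bar(t)}. Since the action is effective, $X\mapsto X^\ast(p)$ is injective on $\kernel D_{\1_a}s$, so the parametrization and $\V(\1_a,p)$ share the dimension $\dim\kernel D_{\1_a}s$ and must coincide.

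The step I expect to be the main obstacle is handling the fibre-product tangency condition $D_{\1_a}s(x)=D_p\alpha(w)$ carried by every vertical vector. Because $\alpha\circ\overline{t}=t\circ\proj_{\G}$, one finds $D_p\alpha(X^\ast(p))=-D_{\1_a}t(X)$, so the $\G$-component of a vertical vector is coupled to its $P$-component through the anchor $D t|_{\kernel D_{\1_a}s}$ and the unit direction $D\1(T_aM)$; keeping track of this coupling, and verifying that the decomposition $x=D\1(\xi)+X$ is compatible with the constraint, is the delicate bookkeeping of the argument. The properness and effectiveness hypotheses enter precisely here, guaranteeing that $L_p$ is an embedded submanifold with $T_pL_p$ as computed and that $X\mapsto X^\ast(p)$ is injective, so that the identification of $\V(\1_a,p)$ with the stated graph is forced.
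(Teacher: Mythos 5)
Your direct computation of $D\overline{t}$ is correct, and it takes a genuinely different route from the paper's proof: the paper deduces the lemma from its description of the fibre $\overline{t}^{-1}(p)$ via \th\ref{C: Kernel of bar(t)}, while you differentiate $\overline{t}$ directly, using $\overline{t}\circ\overline{\1}=\Id_P$ and the splitting $T_{\1_a}\G=D\1(T_aM)\oplus\kernel D_{\1_a}s$ (writing $a=\alpha(p)$) to obtain $D_{(\1_a,p)}\overline{t}(x,w)=w-X^\ast(p)$. Up to that point your argument is sound, and in fact sharper than what is printed.

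The genuine gap is the very last step, where you pass from this identity to ``the asserted description''. A vertical vector $(x,w)$ must satisfy the fibre-product constraint $D_{\1_a}s(x)=D_p\alpha(w)$; with $w=X^\ast(p)$ and $D_p\alpha(X^\ast(p))=-D_{\1_a}t(X)$ (which you yourself derive), this forces the unit-direction component of $x$ to be $-D\1\bigl(D_{\1_a}t(X)\bigr)$ rather than zero. So what your computation actually proves is
\[
\V(\1_a,p)=\Bigl\{\bigl(X-D\1(D_{\1_a}t(X)),\,X^\ast(p)\bigr)\;\Big|\;X\in\kernel D_{\1_a}s\Bigr\},
\]
and this coincides with $\{(x,X^\ast(p))\mid x\in\kernel D_{\1_a}s\}$ only when $D_{\1_a}t$ vanishes on $\kernel D_{\1_a}s$, i.e.\ when the $\G$-orbit of $\alpha(p)$ in $M$ is discrete. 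Otherwise, for $D_{\1_a}t(X)\neq 0$ the pair $(X,X^\ast(p))$ is not even an element of $T_{(\1_a,p)}(\G\times_M P)$, since $D_{\1_a}s(X)=0\neq -D_{\1_a}t(X)=D_p\alpha(X^\ast(p))$; the canonical action of the pair groupoid $M\times M\rightrightarrows M$ already exhibits the discrepancy. Neither of your two patches closes this: effectiveness does not make $X\mapsto X^\ast(p)$ injective (it fails wherever the isotropy is positive-dimensional; the injectivity of your parametrization comes from the $\G$-component alone), and equality of dimensions proves nothing without a containment, which is exactly what fails. For fairness, note that the paper's own proof suffers from the same imprecision, since it rests on \th\ref{C: Kernel of bar(t)}, where fibre and kernel are written as products $t^{-1}(\alpha(p))\times L_p$ and $\kernel D_gt\times T_pL_p$ instead of the corresponding graphs; carried out honestly, your argument establishes the corrected form of the lemma displayed above, not the statement as printed.
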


\begin{proof}
By \th\ref{C: Kernel of bar(t)}, we have that $\V(\1_{\alpha(p)},p) = \kernel D t(\1_{\alpha(p)})\times T_pL_p$.

Recall that $L_p = \overline{t}(\overline{s}^{-1}(p)) = \{\overline{t}(g,p)\mid g\in s^{-1}(\alpha(p))\}$, thus we have that $TL_p = D\overline{t}(T\overline{s}^{-1}(p))$.  Thus we see that for any $X\in \kernel D_{\1_{\alpha(p)}} s = T_{\1_{\alpha(p)}}\overline{s}^{-1}(\alpha(p))$, that $X^\ast(p) = -D_{\1_{\alpha(p)}}\overline{t}(X,0)\in T_pL_p$.

We consider $\gamma\colon I\to s^{-1}(\alpha(p))$ a curve with $\gamma(0) = \1_{\alpha(p)}$ and $\gamma'(0) = X$ in \linebreak$\kernel Ds(\1_{\alpha(p)})$. Set $\widehat{\gamma}(t) = (\gamma(t),p)$. We point  that for any $t\in I$ $s(\gamma(t))= \alpha(p)$, and thus $\widehat{\gamma}(t) \in \G\times_M P$ for any $t\in I$. Moreover, $\widehat{\gamma}'(0) = (X,0)$. So the set $\{(x,X^\ast(p))\mid x\in \kernel Ds(\1_{\alpha(p)})\}$ spans $\kernel D_{(\1_{\alpha(p)},p)} \overline{t}$.
\end{proof}

For the Riemannian metric $\widehat{\eta}_\varepsilon$ on the action groupoid $\G \times_M P\rightrightarrows P$, we denote by $\Hor(g,p) = (\kernel D_{(g,p)}\,\overline{t})^\perp$ the horizontal space of the target map $\overline{t}\colon \G \times_M P\to P$, and proof the following description. 

\begin{thm}\th\label{T: bar(t)-Horizontal distribution}
Consider a proper Lie groupoid $\G\rightrightarrows M$ a Lie groupoid acting on $\alpha\colon P\to M$. Consider the metric $\widehat{\eta}_\varepsilon = \frac{1}{\varepsilon}\eta^{(1)}+\eta^{(P)}$ on the action groupoid $\G\times_M P\rightrightarrows P$. Then The $\overline{t}$-horizontal space with respect to $\widehat{\eta}_\varepsilon$ is given by:
\begin{linenomath}
\begin{align*}
\Hor_\varepsilon(\1_{\alpha(p)},p) = \Bigg\{\left(-\varepsilon\Sh(p)(x)+\frac{1}{\varepsilon}\zeta,X^\ast(p)+\xi\right)\mid& x\in \kernel D_{\1_{\alpha(p)}} s,\: \zeta\in (\kernel D_{\1_{\alpha(p)}} s)^\perp,\\
 &\mbox{ and } \xi\in  T_p(L_p)^\perp\Bigg\}.
\end{align*}
\end{linenomath}
\end{thm}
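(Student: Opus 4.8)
The plan is to identify $\Hor_\varepsilon(\1_{\alpha(p)},p)$ as the $\widehat{\eta}_\varepsilon$-orthogonal complement of the vertical space inside $T_{(\1_{\alpha(p)},p)}(\G\times_M P)$ and to compute this complement directly. Throughout I work at the unit arrow $\1_{\alpha(p)}$ and use the explicit description of the vertical space furnished by \th\ref{L: Vertical space of target map of action Lie groupoid}, namely $\V(\1_{\alpha(p)},p)=\{(y,Y^\ast(p))\mid y\in\kernel D_{\1_{\alpha(p)}}s\}$. By \th\ref{T: contraction of groupoid induces cheeger deformation} the map $\overline{t}$ is a Riemannian submersion for $\widehat{\eta}_\varepsilon$, so $\Hor_\varepsilon(\1_{\alpha(p)},p)$ is precisely the orthogonal complement of $\V(\1_{\alpha(p)},p)$ in $T_{(\1_{\alpha(p)},p)}(\G\times_M P)$ and has dimension $\dim P$. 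It therefore suffices to establish the two inclusions between the displayed set and this orthogonal complement.

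For the inclusion $\supseteq$, I take a vector $v=\bigl(-\varepsilon\Sh(p)(x)+\tfrac{1}{\varepsilon}\zeta,\,X^\ast(p)+\xi\bigr)$ of the stated form which is tangent to $\G\times_M P$, and pair it with an arbitrary vertical vector $(y,Y^\ast(p))$. Splitting $\widehat{\eta}_\varepsilon=\tfrac{1}{\varepsilon}\eta^{(1)}+\eta^P$ and using that $\Sh(p)(x)\in\kernel D_{\1_{\alpha(p)}}s$ (so it is the only surviving contribution of the first slot against $y$, since $\zeta\in(\kernel D_{\1_{\alpha(p)}}s)^\perp$ is $\eta^{(1)}$-orthogonal to $y$) together with $\xi\in T_p(L_p)^\perp$ being $\eta^P$-orthogonal to $Y^\ast(p)\in T_pL_p$, the pairing reduces to $-\eta^{(1)}(\Sh(p)(x),y)+\eta^P(X^\ast(p),Y^\ast(p))$. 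This vanishes by the defining identity of the orbit tensor $\Sh(p)$, so $v$ is horizontal.

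For the reverse inclusion $\subseteq$, I start with an arbitrary horizontal vector $(u,w)\in\Hor_\varepsilon(\1_{\alpha(p)},p)$ and decompose $u=u_0+u_1$ with $u_0\in\kernel D_{\1_{\alpha(p)}}s$ and $u_1\in(\kernel D_{\1_{\alpha(p)}}s)^\perp$, and $w=w_0+w_1$ with $w_0\in T_pL_p$ and $w_1\in T_p(L_p)^\perp$. Since the vectors $X^\ast(p)$ span $T_pL_p$ by \th\ref{L: Vertical space of target map of action Lie groupoid}, I may write $w_0=X^\ast(p)$ for some $x\in\kernel D_{\1_{\alpha(p)}}s$; one checks that $\Sh(p)(x)$ depends only on $w_0$ and not on the chosen representative $x$, again from the defining identity of $\Sh(p)$. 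Pairing $(u,w)$ against every $(y,Y^\ast(p))$ and using the same orthogonalities as above collapses the horizontality condition to $\eta^{(1)}\bigl(\tfrac{1}{\varepsilon}u_0+\Sh(p)(x),y\bigr)=0$ for all $y\in\kernel D_{\1_{\alpha(p)}}s$; as both arguments lie in $\kernel D_{\1_{\alpha(p)}}s$ and $\eta^{(1)}$ is nondegenerate there, this forces $u_0=-\varepsilon\Sh(p)(x)$. Setting $\zeta=\varepsilon u_1$ and $\xi=w_1$ exhibits $(u,w)$ in the claimed form, and the two inclusions give the asserted equality.

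The main obstacle is the reverse inclusion, where the key point is that the orbit tensor $\Sh(p)$ takes values in $\kernel D_{\1_{\alpha(p)}}s$ and is determined by the orbit-direction component $w_0=X^\ast(p)$ alone; this is what lets one solve the orthogonality relation for the $\kernel Ds$-component $u_0$. A second point, which I would flag explicitly, is that the parameters $\zeta$ and $\xi$ in the displayed set are not independent: the fibre-product tangency condition $D_{\1_{\alpha(p)}}s=D_p\alpha$ cutting out $T_{(\1_{\alpha(p)},p)}(\G\times_M P)$ ties $\zeta$ to $x$ and $\xi$, so the set is to be read as those vectors of the stated form that are tangent to $\G\times_M P$. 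With this understanding its dimension is exactly $\dim P$, in agreement with $\overline{t}$ being a Riemannian submersion by \th\ref{T: contraction of groupoid induces cheeger deformation}.
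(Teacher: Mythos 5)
Your proof is correct and takes essentially the same route as the paper's: both identify the horizontal space as the $\widehat{\eta}_\varepsilon$-orthogonal complement of the vertical space from \th\ref{L: Vertical space of target map of action Lie groupoid}, use the defining identity of the orbit tensor $\Sh(p)$ to convert the $\eta^P$-pairing of action vectors into an $\eta^{(1)}$-pairing on $\kernel D_{\1_{\alpha(p)}}s$, and solve for the $\kernel Ds$-component, the paper doing this in a single ``if and only if'' computation rather than as two inclusions. Your closing observation that $\zeta$ is not a free parameter but is pinned down by the tangency condition to $\G\times_M P$ is a genuine subtlety that the paper's proof passes over silently (it simply sets $\zeta=Y^\perp$, also ignoring the harmless factor of $\varepsilon$ in that identification).
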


\begin{proof}
We consider a vector $(Y,X)\in T_{(\1_{\alpha(p)},p)}\, \G\times_M P$. We denote by $X^\top$ the component of $X$ tangent to the orbit $L_p\supset P$ of the action groupoid, and $X^\perp$ the complementary component of $X$ normal to $L_p$ with respect to $\eta^P$. In this way $X = X^\top+X^\perp$ in a unique way. But since $L_p = \overline{t}(\overline{s}^{-1}(p))$, then there exists $x\in \kernel D_{\1_{\alpha(p)}}s$, such that $X^\ast (p) = -D_{\1_{\alpha(p)}}\overline{t}(x,0) = X^\top$. And thus $X= X^\ast(p)+X^\perp$. 

The vector $(Y,X^\ast(p)+X^\perp)$ is $(\frac{1}{\varepsilon}\eta^{(1)}+\eta^P)$-horizontal if and only if for all $z\in \kernel D_{\1_{\alpha(p)}} s$, we have
\begin{linenomath}
\begin{align*}
0 &=  \left(\frac{1}{\varepsilon}\eta^{(1)}+\eta^P\right)\Bigl((Y,X^\ast(p)+X^\perp),(z,Z^\ast(p))\Bigr)\\
&= \frac{1}{\varepsilon}\eta^{(1)}(Y,z)+\eta^P(X^\ast(p),Z^\ast(p))\\
& = \frac{1}{\varepsilon}\eta^{(1)}(Y,z)+\eta^{(1)}(\Sh(p)(x),z)\\
&= \eta^{(1)}\left(\frac{1}{\varepsilon}Y+\Sh(p)(x),z\right).
\end{align*}
\end{linenomath}

We write $Y = Y^\top+Y^\perp$, where $Y^\top\in \kernel D_{\1_{\alpha(p)}}s$ and \linebreak$Y^\perp\in \bigl(\kernel D_{\1_{\alpha(p)}}s\bigr)^\perp$ with respect to $\eta^{(1)}$. Since $\Sh(p)(x)\in \kernel D_{\1_{\alpha(p)}}s$, we conclude that for all $z\in \kernel D_{\1_{\alpha(p)}}s$
\[
0=\eta^{(1)}\left(\frac{1}{\varepsilon}Y+\Sh(p)(x),z\right) = \eta^{(1)}\left(\frac{1}{\varepsilon}Y^\top+\Sh(p)(x),z\right).
\] 
Thus we conclude that $Y^\top = -t\Sh(p)(x)$, and thus the claim follows by setting $\xi = X^\perp$ and $\zeta = Y^\perp$.
\end{proof}

\begin{remark}
In contrast to \cite[Lemma~3.2 (b)]{Mueter}, we have an extra term $\zeta$. I the case when the foliation $\fol_M$ on $M$ induced by the orbits of $\G\rightrightarrows M$ consist of only one leaf, then 
\begin{linenomath}
\begin{align*}
\Hor(\1_{\alpha(p)},p) = \Big\{\big(-\varepsilon\Sh(p)(x),X^\ast(p)+\xi\big)\mid&\, x\in \kernel Ds(\1_{\alpha(p)}),\\
&\mbox{ and }\xi\in \nu_p(L_p)\Big\},
\end{align*}
\end{linenomath}
and thus we recover \cite[Lemma~3.2 (b)]{Mueter} for Lie  group actions.
\end{remark}

\subsection{Cheeger tensor} We define the \emph{Cheeger tensor of $\eta^P$ along the action of $\G\rightrightarrows M$ on $\alpha\colon P\to M$} for the metric $\eta_\varepsilon$ induced on $P$ by  \th\ref{T: contraction of groupoid induces cheeger deformation} to be the map $\Ch^{-1}_\varepsilon(p)\colon T_p P\to T_pP$ defined as follows: We write $X\in T_pP$ as $X = X^\top+X^\perp$, where $X^\top \in T_pL_p$ and $X^\perp \in \nu_p(L_p)$. Thus, there exists an $x\in \kernel D_{\1_{\alpha(p)}}\, s$ such that $X^\ast(p) = X^\top$. With this decomposition $X = X^\ast(p)+X^\perp$ we set
\[
\Ch^{-1}_\varepsilon(p)(X^\ast(p)+X^\perp ) = (\Id+\varepsilon\Sh(p)(x))^\ast(p)+X^\perp
\]

\begin{lemma}\th\label{L: Cheeger tensor does not depend of choice of x such that xast is X}
Let $\G\times_M P\rightrightarrows P$ be the groupoid action of a Lie groupoid action of the proper Lie groupoid $\G\rightrightarrows M$ on $\alpha\colon P\to M$. For $X\in TP$, such that $X(p)\in T_p L_p$ for all $p\in P$, the Cheeger tensor $\Ch^{-1}_\varepsilon(p)$ does not depend on the choice of $x\in \kernel D_{\1_{\alpha(p)}}\, s$ such that $-D_{\1_{\alpha(p)}}\overline{t}(x,0) = X(p)$. 
\end{lemma}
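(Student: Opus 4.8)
The plan is to reduce the statement to a single algebraic fact about the orbit tensor $\Sh(p)$: that it annihilates the kernel of the infinitesimal action map. Write $K = \kernel D_{\1_{\alpha(p)}}s$ and let $\ell\colon K\to T_pP$ denote the linear map $\ell(x) = x^\ast(p) = -D_{(\1_{\alpha(p)},p)}\overline{t}(x,0)$, whose image is $T_pL_p$ (as recorded in \th\ref{L: Vertical space of target map of action Lie groupoid}). Given $X\in T_pP$ with decomposition $X = X^\top + X^\perp$, the normal component $X^\perp$ depends only on $X$ and on $\eta^P$, so the only term in the formula for $\Ch^{-1}_\varepsilon(p)(X)$ that could possibly depend on the chosen $x$ is the tangential term $((\Id+\varepsilon\Sh(p))(x))^\ast(p) = \ell\big((\Id+\varepsilon\Sh(p))(x)\big)$. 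Thus, if $x,x'\in K$ both satisfy $\ell(x)=\ell(x')=X^\top$, it suffices---by linearity of $\Sh(p)$ and of $\ell$---to prove that $\ell\big((\Id+\varepsilon\Sh(p))(z)\big)=0$ for $z := x-x'\in\kernel\ell$.

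The key step is to show that $\Sh(p)$ maps $\kernel\ell$ to $0$. For $z\in\kernel\ell$ we have $z^\ast(p)=0$, and then the defining relation of the orbit tensor gives, for every $y\in K$,
\[
\eta^{(1)}(\Sh(p)(z),y) = \eta^P\big(z^\ast(p),y^\ast(p)\big) = 0.
\]
Since $\eta^{(1)}$ restricts to a nondegenerate inner product on $K$ and $\Sh(p)(z)\in K$, this forces $\Sh(p)(z)=0$. Conceptually, $\kernel\ell$ consists of the infinitesimal isotropy directions at $p$, on which the orbit tensor is identically zero; this is exactly the phenomenon that makes the definition of $\Ch^{-1}_\varepsilon(p)$ depend a priori on the lift $x$ when the action has nontrivial isotropy.

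It then follows that $(\Id+\varepsilon\Sh(p))(z) = z + \varepsilon\,\Sh(p)(z) = z$, so $\ell\big((\Id+\varepsilon\Sh(p))(z)\big) = \ell(z) = 0$, which is precisely what was needed; hence $((\Id+\varepsilon\Sh(p))(x))^\ast(p) = ((\Id+\varepsilon\Sh(p))(x'))^\ast(p)$ and $\Ch^{-1}_\varepsilon(p)$ is well defined. I do not expect a genuine obstacle here: the entire content is the observation that $\Sh(p)$ kills $\kernel\ell$, which is immediate from nondegeneracy of $\eta^{(1)}$. The only point requiring a moment's care is verifying that the normal component $X^\perp$ plays no role, so that the ambiguity is confined to the tangential term and the reduction above is legitimate.
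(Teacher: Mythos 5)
Your proof is correct and takes essentially the same route as the paper's own argument: the paper likewise applies the defining relation of the orbit tensor together with nondegeneracy of $\eta^{(1)}$ on $\kernel D_{\1_{\alpha(p)}}s$ to conclude that $\Sh(p)(x)=\Sh(p)(x')$ (equivalently, that $\Sh(p)$ annihilates $x-x'\in\kernel\ell$), and then finishes by linearity of $-D_{(\1_{\alpha(p)},p)}\overline{t}(\cdot,0)$. Your packaging of the key step as the standalone fact ``$\Sh(p)$ kills $\kernel\ell$'' is merely a cleaner phrasing of the same computation.
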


\begin{proof}
We take $x,y\in  \kernel D_{\1_{\alpha(p)}} s$ with 
\[-D_{(\1_{\alpha(p)},p)}\overline{t}(x,0) = X(p) = -D_{(\1_{\alpha(p)},p)}\overline{t}(y,0)
\] for all $p\in P$. This is equivalent to  $x-y\in \kernel D_{(\1_{\alpha(p)},p)}\,\overline{t}$. Moreover since $L_p =\overline{t}(\overline{s}^{-1}(p)) = \overline{t}(s^{-1}(\alpha(p))\times\{p\})$, we have $T_p L_p = D_{(\1_{\alpha(p)},p)}\,\overline{t}(\kernel D_{\1_{\alpha(p)}} s\times \{0\})$. This implies that $x-y\in \kernel D_{\1_{\alpha(p)}} s$. Then we get that for any $z\in \kernel D_{\1_{\alpha(p)}} s$ the following
\[
\eta^{(1)}(\Sh(p)(x-y),z) = \eta^{(P)}(-D_{(\1_{\alpha(p)},p)}\overline{t}(x-y,0), Z^\ast(p)) = 0.
\]
This implies that $\Sh(p)(x) = \Sh(p)(y)$. Thus we obtain that 
\begin{linenomath}
\begin{align*}
-D_{(\1_{\alpha(p)},p)}\overline{t}(y+\varepsilon\Sh(p)(y),0) =&  -D_{(\1_{\alpha(p)},p)}\overline{t}(y+\varepsilon\Sh(p)(y),0)- D_{(\1_{\alpha(p)},p)}\overline{t}(x-y,0)\\
=& -D_{(\1_{\alpha(p)},p)}\overline{t}(y+x-y+\varepsilon\Sh(p)(y),0)\\
=& -D_{(\1_{\alpha(p)},p)}\overline{t}(x+\varepsilon\Sh(p)(y),0)\\
=& -D_{(\1_{\alpha(p)},p)}\overline{t}(x+\varepsilon\Sh(p)(x),0).
\end{align*}
\end{linenomath}
\end{proof}

We define the \emph{horizontal lift at $p\in P$ of tangent vectors of $P$} as the map $h(p)\colon T_p P \to T_{(\1_{\alpha(p)},p)}\G\times_M P$ given by
\begin{equation}
h(p)(X) = h(p)(X^\ast(p)+X^\perp) = (-\varepsilon\Sh(p)(x), X).\tag{\mbox{Horizontal}}\label{E: Horizontal lifts of TGXP->P at 1(P)}
\end{equation}
Here we are using the fact that for $X^\top\in T_p(L_p)$ there exists an $x\in \kernel D_{\1_{\alpha(p)}}\, s$ such that $X^\ast(p) = X^\top$. We observe that it follows from the proof of \th\ref{L: Cheeger tensor does not depend of choice of x such that xast is X}, that $\Sh(p)(x)$ does not depend of the choice of $x\in \kernel D_{\1_{\alpha(p)}} s$, such that $X^\ast(p) = X^\top$. In other words, the definition of $h$ does not depend on the choice of $x\in \kernel D_{\1_{\alpha(p)}} s$.

\begin{remark}
From the description of $\Hor(\1_{\alpha(p)},p)$ in \th\ref{T: bar(t)-Horizontal distribution} we conclude that $h(p)(X)\in \Hor(\1_{\alpha(p)},p)$.
\end{remark}

\begin{lemma}\th\label{L: target map of groupoid action is the identity on the normal part of the orbits}
Consider a proper Lie groupoid  $\G\rightrightarrows M$ acting  on $\alpha\colon P\to M$ via $\mu\colon \G\times_M P \to P$. Let $\eta^{(1)}$ be a $1$-metric  on $\G$, and consider $\eta^P$ a $\G$-invariant Riemannian metric on $P$. Moreover, assume that $\nu_p(L_p)\subset T_p\, \alpha^{-1}(\alpha(p))$. Then for $\xi\in \nu_p (L_p)$ we have that 
\[
D_{(\1_{\alpha(p)},p)}\overline{t}(0,\xi) = \xi.
\]
\end{lemma}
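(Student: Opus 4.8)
The plan is to compute the differential directly from the definition $\overline{t}(g,q) = \mu(g,q)$, by realizing the tangent vector $(0,\xi)$ as the velocity of an explicit curve in $\G\times_M P$ and then pushing that curve forward by $\overline{t}$. First I would observe that the hypothesis guarantees $\xi\in\kernel D_p\alpha$: any tangent vector to the fiber $\alpha^{-1}(\alpha(p))$ lies in $\kernel D_p\alpha$, so the containment $\nu_p(L_p)\subset T_p\,\alpha^{-1}(\alpha(p))$ forces $D_p\alpha(\xi)=0$. This is precisely the condition that makes $(0,\xi)$ a legitimate tangent vector of $\G\times_M P$ at $(\1_{\alpha(p)},p)$, since $T_{(\1_{\alpha(p)},p)}(\G\times_M P) = \{(v,w)\mid D_{\1_{\alpha(p)}}s(v) = D_p\alpha(w)\}$, and the pair $(0,\xi)$ satisfies $D_{\1_{\alpha(p)}}s(0) = 0 = D_p\alpha(\xi)$.

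Next I would choose any smooth curve $c\colon I\to P$ with $c(0)=p$ and $c'(0)=\xi$, and lift it to the curve $\widehat{c}(r) = (\1_{\alpha(c(r))},c(r))$ in $\G\times P$. Since $s(\1_{\alpha(c(r))}) = \alpha(c(r))$ for every $r$, we have $\widehat{c}(r)\in\G\times_M P$, so $\widehat{c}$ is a genuine curve in the fiber product. Moreover $\widehat{c}(0) = (\1_{\alpha(p)},p)$, and applying the chain rule to $r\mapsto \1_{\alpha(c(r))} = \1\circ\alpha\circ c$ shows that the $\G$-component of $\widehat{c}'(0)$ equals $D_{\alpha(p)}\1\big(D_p\alpha(\xi)\big) = D_{\alpha(p)}\1(0) = 0$; hence $\widehat{c}'(0) = (0,\xi)$, so $\widehat{c}$ is an admissible representative of the vector $(0,\xi)$.

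Finally I would evaluate $\overline{t}\circ\widehat{c}$. By definition $\overline{t}(\1_{\alpha(c(r))},c(r)) = \mu(\1_{\alpha(c(r))},c(r))$, and the unit axiom (iii) of a groupoid action, namely $\mu(\1_{\alpha(q)},q)=q$, gives $\overline{t}(\widehat{c}(r)) = c(r)$ for all $r$. Differentiating at $r=0$ then yields $D_{(\1_{\alpha(p)},p)}\overline{t}(0,\xi) = c'(0) = \xi$, as claimed. The only genuinely load-bearing point, and the place where the normality hypothesis is indispensable, is the verification in the second step that the $\G$-component of $\widehat{c}'(0)$ vanishes: without $\xi\in\kernel D_p\alpha$ the pair $(0,\xi)$ would neither be tangent to $\G\times_M P$ nor admit the simple unit-section lift that makes the computation collapse through axiom (iii). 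Everything else is routine.
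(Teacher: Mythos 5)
Your proof is correct and follows essentially the same route as the paper: both realize $(0,\xi)$ as the velocity of a curve in $\G\times_M P$ whose $\G$-component lies in the unit section, and then collapse $\overline{t}$ along that curve through the unit axiom $\mu(\1_{\alpha(q)},q)=q$. The only difference is minor: the paper chooses the curve inside the fiber $\alpha^{-1}(\alpha(p))$ so that the $\G$-component is the constant $\1_{\alpha(p)}$, whereas you take an arbitrary curve with velocity $\xi$ and use $D_p\alpha(\xi)=0$ to kill the derivative of the unit-section component $\1\circ\alpha\circ c$ --- a slightly more flexible variant of the same computation.
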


\begin{proof}
Consider any curve  $\gamma\colon I\to \alpha^{-1}(\alpha(p))$   with $\gamma(0) = p$ and $\gamma'(0)= \xi$. Then the curve $\widehat{\gamma}\colon I\to \G\times_M P$ given by $\widehat{\gamma}(r) = \big(\1_{\alpha(p)},\gamma(r)\big)$ is a curve in $\G\times_M P$, since $s(\1_{\alpha(p)}) =\alpha(p)= \alpha(\gamma(r))$, and $\widehat{\gamma}(0) = (\1_{\alpha(p)},p)$. Moreover, we have $\widehat{\gamma}'(0) =(0,\xi)$.

Now we simply evaluate $\overline{t}$ along $\widehat{\gamma}$:
\begin{linenomath}
\begin{align*}
\overline{t}(\widehat{\gamma}(r)) &= \mu(\1_{\alpha(p)},\gamma(r))\\
&= \mu(\1_{\alpha(\gamma(\ell))},\gamma(\ell))\\
&= \gamma(\ell).
\end{align*}
Thus we conclude that 
\[
D_{(\1_{\alpha(p)},p)}\overline{t}(0,\xi) = \frac{d}{dr} \overline{t}(\1_{\alpha(p)},\gamma(r))|_{r=0}= \frac{d}{dr} \gamma(r)|_{r=0} = \xi.
\]
Since $\nu_p (L_p)\subset T_p\, \alpha^{-1}(\alpha(p))$, for $\xi\in \nu_p (L_p)$ we may find a curve as above. This proves the result.
\end{linenomath}
\end{proof}

\begin{remark}
We point out that for a Lie group action $\mu$ of  $G\rightrightarrows\{\ast\}$ along $\alpha\colon M\to\{\ast\}$, we automatically have that $\nu_p L_p \subset T_p M = T_p \alpha^{-1}(\ast) = T_p \alpha^{-1}(\alpha(p))$. 
\end{remark}

\begin{remark}
We consider an orbit-like foliation $(M,\fol)$ and fix a leaf $L_p$. Then for some saturated tubular neighborhood $U$ of $L$ in $M$, by \th\ref{T: Local transversal homogeneous subfoliation} there exists a Lie groupoid $\G\rightrightarrows L$, and a Lie groupoid action $\mu$ of $\G\rightrightarrows L$ on $\pi\colon \nu(L)\to L$, such that the foliation $\fol|_{U}$ is given by the orbits of the groupoid action, when we identify $U$ with a tubular neighborhood $\nu^\delta (L)$ of the $0$-section in $\nu(L)$ via the normal exponential map. In this case we have  for $\xi\in \nu^\delta(L)$, that
\[
\nu_{\xi}(L_\xi)\subset  T_\xi(\nu^\delta(L)) \cong \nu^\delta_{\pi(\xi)}(L)\cong T_\xi(\nu^\delta_{\pi(\xi)}(L)) = T_\xi \Big((\pi)^{-1}(\pi(\xi))\Big).
\]
\end{remark}

We now proceed to show that the horizontal vector $h(p)$ defined by \eqref{E: Horizontal lifts of TGXP->P at 1(P)} is the horizontal lift of the Cheeger thensor $\Ch^{-1}_\varepsilon(p)$.

\begin{thm}\th\label{T: h is horizontal lift of cheeger}
Consider $\G\rightrightarrows M$ a proper Lie groupoid acting on $\alpha\colon P\to M$ via $\mu$, and $\eta^{(1)}$ a $1$-metric  on $\G$. Let $\eta^P$ an $\mu$-transversely invariant Riemannian metric on $P$, and assume that for all $p\in M$ we have $\nu_p (L_p)\subset T_p \alpha^{-1}(\alpha(p))$. We consider $-D\overline{t}\colon T(\G\times_M P)\to TP$. Then for all $X\in T_p P$, the vector $h(p)(X)$ is an $D\overline{t}$-horizontal lift of $Ch^{-1}_\varepsilon(p)(X)$, i.e.
\[
D\overline{t}(h(p)(X)) = Ch^{-1}_\varepsilon(p)(X).
\]
\end{thm}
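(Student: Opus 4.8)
The plan is to prove the identity by a single direct linear computation, reducing it to the three elementary evaluations of $D\overline{t}$ supplied by the preceding lemmas. First I would fix $X\in T_pP$ and use the decomposition $X = X^\ast(p)+X^\perp$, where $X^\ast(p)\in T_pL_p$ is the component tangent to the orbit and $X^\perp\in\nu_p(L_p)$, choosing some $x\in\kernel D_{\1_{\alpha(p)}}s$ with $x^\ast(p)=X^\ast(p)$; by \th\ref{L: Cheeger tensor does not depend of choice of x such that xast is X} the final answer will not depend on this choice. With this notation $h(p)(X) = (-\varepsilon\Sh(p)(x),\,X^\ast(p)+X^\perp)$, and since $D\overline{t}$ is linear I would split
\begin{linenomath}
\begin{align*}
D\overline{t}(h(p)(X)) = D\overline{t}(-\varepsilon\Sh(p)(x),0) + D\overline{t}(0,X^\ast(p)) + D\overline{t}(0,X^\perp).
\end{align*}
\end{linenomath}

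Next I would evaluate the three summands separately. For the first, the defining relation $w^\ast(p) = -D\overline{t}(w,0)$ together with linearity gives $D\overline{t}(-\varepsilon\Sh(p)(x),0) = \varepsilon\,(\Sh(p)(x))^\ast(p)$. For the third, the hypothesis $\nu_p(L_p)\subset T_p\alpha^{-1}(\alpha(p))$ lets me apply \th\ref{L: target map of groupoid action is the identity on the normal part of the orbits} to conclude $D\overline{t}(0,X^\perp) = X^\perp$. The middle term is the one requiring the structural input: by the description of the vertical space in \th\ref{L: Vertical space of target map of action Lie groupoid}, the vector $(x,x^\ast(p))$ lies in $\kernel D\overline{t}$, so $D\overline{t}(x,x^\ast(p))=0$; subtracting the tangent vector $(x,0)$, on which $D\overline{t}(x,0) = -x^\ast(p)$, yields $D\overline{t}(0,X^\ast(p)) = x^\ast(p) = X^\ast(p)$.

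Finally I would assemble the three pieces and recognize the Cheeger tensor. Adding them gives
\begin{linenomath}
\begin{align*}
D\overline{t}(h(p)(X)) = X^\ast(p) + \varepsilon\,(\Sh(p)(x))^\ast(p) + X^\perp,
\end{align*}
\end{linenomath}
and by linearity of the map $w\mapsto w^\ast(p) = -D\overline{t}(w,0)$ the first two terms combine into $\big((\Id+\varepsilon\Sh(p))(x)\big)^\ast(p)$, which is exactly the tangential part of $\Ch^{-1}_\varepsilon(p)(X)$; together with the normal part $X^\perp$ this is precisely $\Ch^{-1}_\varepsilon(p)(X)$ as defined, which closes the argument.

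I expect the main obstacle to be the careful handling of the middle term: the pair $(0,X^\ast(p))$ is not itself manifestly tangent to $\G\times_M P$, so it must be accessed through the genuine vertical vector $(x,x^\ast(p))$ of \th\ref{L: Vertical space of target map of action Lie groupoid} together with the tangent vector $(x,0)$, and the minus sign in the definition of the $\ast$-operation must be tracked consistently through every summand so as to land on $+\Ch^{-1}_\varepsilon(p)(X)$ rather than its negative. A secondary point to verify is that $h(p)(X)$ genuinely lands in $\Hor_\varepsilon(\1_{\alpha(p)},p)$, so that the computed image is in fact the horizontal lift; this is precisely the content of \th\ref{T: bar(t)-Horizontal distribution} with the parameter $\zeta$ set to zero.
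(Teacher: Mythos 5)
Your proposal is correct and takes essentially the same approach as the paper: the paper runs the identical computation in reverse, starting from $\Ch^{-1}_\varepsilon(p)(X)$, rewriting $X^\perp$ as $D_{(\1_{\alpha(p)},p)}\overline{t}(0,X^\perp)$ via \th\ref{L: target map of groupoid action is the identity on the normal part of the orbits}, adding the vertical vector $(x,X^\ast(p))$ supplied by \th\ref{L: Vertical space of target map of action Lie groupoid}, and recombining by linearity to land on $D_{(\1_{\alpha(p)},p)}\overline{t}(h(p)(X))$. Your treatment of the middle term $(0,X^\ast(p))$ --- accessing it as the vertical vector $(x,X^\ast(p))$ minus the tangent vector $(x,0)$ --- is exactly the paper's device, just applied in the opposite direction, and your sign bookkeeping matches the paper's conventions.
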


\begin{proof}
We consider $X\in T_p P$, and assume that $X = X^\ast(p)+X^\perp$ for some $x\in \kernel D_{\1_{\alpha(p)}} s$. Then we have 
\begin{linenomath}
\begin{align*}
\Ch^{-1}_{\varepsilon}(p) (X) &= -D_{(\1_{\alpha(p)},p)} \overline{t}(x+\varepsilon \Sh(p)(x),0)+X^\perp\\
&=D_{(\1_{\alpha(p)},p)} \overline{t}(-x-\varepsilon \Sh(p)(x),0)+X^\perp\\
& = D_{(\1_{\alpha(p)},p)} \overline{t}(-x-\varepsilon \Sh(p)(x),X^\perp),
\end{align*}
\end{linenomath}
since by our hypothesis $\nu_p(L_p)\subset T_p\, \alpha^{-1}(\alpha(p))$ and \th\ref{L: target map of groupoid action is the identity on the normal part of the orbits}, we have \linebreak$D_{(\1_{\alpha(p)},p)}\overline{t}(0,X^\perp) = X^\perp$. Now we recall that by \th\ref{L: Vertical space of target map of action Lie groupoid}, we have that 
\[
D_{(\1_{\alpha(p)},p)} \overline{t}(x,X^\ast(p)) =0.
\]
Thus we conclude that 
\begin{linenomath}
\begin{align*}
\Ch^{-1}_{\varepsilon}(p) (X) &=D_{(\1_{\alpha(p)},p)} \overline{t}(-x-\varepsilon \Sh(p)(x),X^\perp) + D_{(\1_{\alpha(p)},p)} \overline{t}(x,X^\ast(p))\\
&= D_{(\1_{\alpha(p)},p)} \overline{t}(-x-\varepsilon \Sh(p)(x)+x,X^\ast(p)+X^\perp)\\
&= D_{(\1_{\alpha(p)},p)} \overline{t}(-\varepsilon\Sh(p)(x), X)\\
& =D_{(\1_{\alpha(p)},p)} \overline{t}\big( h(p)(X)\big).
\end{align*}
\end{linenomath}
\end{proof}

We can describe the metric $\eta_\varepsilon$ on $P$ explicitly in terms of our original metric $\eta^P$ on $P$ and the Cheeger tensor.

\begin{thm}\th\label{T: cheeger deformation controlled by cheeger tensor}
Consider $\G\rightrightarrows M$ a proper Lie groupoid acting on $\alpha\colon P\to M$  via $\mu$, and $\eta^{(1)}$  a $1$-metric on  $\G$. Let $\eta^P$ be a $\mu$-transversely invariant Riemannian metric on $P$, and assume that for all $p\in M$ we have $\nu_p (L_p)\subset T_p \alpha^{-1}(\alpha(p))$. Then for $X,Y\in T_pP$ we have
\[
\eta_\varepsilon(X,Y) = \eta^P(Ch_\varepsilon(p)(X),Y).
\]
\end{thm}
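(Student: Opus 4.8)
The plan is to compute $\eta_\varepsilon(X,Y)$ directly from the Riemannian submersion that defines it and then collapse the resulting expression using the orbit tensor. By \th\ref{T: contraction of groupoid induces cheeger deformation} the map $\overline{t}\colon(\G\times_M P,\widehat{\eta}_\varepsilon)\to P$ is a Riemannian submersion, and $\eta_\varepsilon$ is the metric it induces, so for $X,Y\in T_pP$ we have $\eta_\varepsilon(X,Y)=\widehat{\eta}_\varepsilon(\widetilde{X},\widetilde{Y})$, where $\widetilde{X},\widetilde{Y}\in\Hor_\varepsilon(\1_{\alpha(p)},p)$ are the $\overline{t}$-horizontal lifts of $X,Y$ at the fibre point $(\1_{\alpha(p)},p)$ (which lies over $p$ since $\mu(\1_{\alpha(p)},p)=p$). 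The first step is to identify these lifts. Since \th\ref{T: h is horizontal lift of cheeger} provides $D\overline{t}(h(p)(Z))=\Ch^{-1}_\varepsilon(p)(Z)$ with $h(p)(Z)$ horizontal, substituting $Z=\Ch_\varepsilon(p)(X)$ gives $D\overline{t}(h(p)(\Ch_\varepsilon(p)(X)))=X$, so $\widetilde{X}=h(p)(\Ch_\varepsilon(p)(X))$, and likewise $\widetilde{Y}=h(p)(\Ch_\varepsilon(p)(Y))$.

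Next I would make the lifts explicit. Writing $X=x^\ast(p)+X^\perp$ with $x\in\kernel D_{\1_{\alpha(p)}}s$ and $X^\perp\in\nu_p(L_p)$, and setting $\bar{x}=(\Id+\varepsilon\Sh(p))^{-1}(x)$, the definition of the Cheeger tensor gives $\Ch_\varepsilon(p)(X)=\bar{x}^\ast(p)+X^\perp$. Hence by the defining formula \eqref{E: Horizontal lifts of TGXP->P at 1(P)} for $h(p)$ (whose independence of the chosen preimage is \th\ref{L: Cheeger tensor does not depend of choice of x such that xast is X}) we obtain $\widetilde{X}=(-\varepsilon\Sh(p)(\bar{x}),\,\bar{x}^\ast(p)+X^\perp)$, and analogously $\widetilde{Y}=(-\varepsilon\Sh(p)(\bar{y}),\,\bar{y}^\ast(p)+Y^\perp)$ with $\bar{y}=(\Id+\varepsilon\Sh(p))^{-1}(y)$ and $Y=y^\ast(p)+Y^\perp$.

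Then I would expand, using $\widehat{\eta}_\varepsilon=\tfrac1\varepsilon\eta^{(1)}+\eta^P$. The $\eta^{(1)}$-contribution is $\tfrac1\varepsilon\eta^{(1)}(-\varepsilon\Sh(p)(\bar{x}),-\varepsilon\Sh(p)(\bar{y}))=\varepsilon\,\eta^{(1)}(\Sh(p)(\bar{x}),\Sh(p)(\bar{y}))$, while in the $\eta^P$-contribution $\eta^P(\bar{x}^\ast(p)+X^\perp,\bar{y}^\ast(p)+Y^\perp)$ the mixed pairings vanish because $\bar{x}^\ast(p),\bar{y}^\ast(p)\in T_pL_p$ and $X^\perp,Y^\perp\in\nu_p(L_p)$, leaving $\eta^P(\bar{x}^\ast(p),\bar{y}^\ast(p))+\eta^P(X^\perp,Y^\perp)$. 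I would then invoke the defining relation of the orbit tensor $\eta^{(1)}(\Sh(p)(a),b)=\eta^P(a^\ast(p),b^\ast(p))$ to rewrite $\eta^P(\bar{x}^\ast(p),\bar{y}^\ast(p))=\eta^{(1)}(\Sh(p)(\bar{x}),\bar{y})$; adding this to the $\eta^{(1)}$-contribution and using linearity gives $\eta^{(1)}(\Sh(p)(\bar{x}),(\Id+\varepsilon\Sh(p))(\bar{y}))=\eta^{(1)}(\Sh(p)(\bar{x}),y)$, since $(\Id+\varepsilon\Sh(p))(\bar{y})=y$. Applying the orbit tensor relation once more turns this into $\eta^P(\bar{x}^\ast(p),y^\ast(p))=\eta^P(\bar{x}^\ast(p),Y^\top)$, so altogether $\eta_\varepsilon(X,Y)=\eta^P(\bar{x}^\ast(p),Y^\top)+\eta^P(X^\perp,Y^\perp)$.

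Finally I would compare with the target expression: expanding $\eta^P(\Ch_\varepsilon(p)(X),Y)=\eta^P(\bar{x}^\ast(p)+X^\perp,\,Y^\top+Y^\perp)$ and discarding the two vanishing tangential/normal cross-terms yields exactly $\eta^P(\bar{x}^\ast(p),Y^\top)+\eta^P(X^\perp,Y^\perp)$, matching the quantity computed above. I expect the main obstacle to be the bookkeeping in the third step: one must be careful that the horizontal lift of $X$ itself (not of $\Ch^{-1}_\varepsilon(p)(X)$) is $h(p)(\Ch_\varepsilon(p)(X))$, and then fuse the separate $\tfrac1\varepsilon\eta^{(1)}$- and $\eta^P$-contributions into a single $\eta^P$-pairing by applying the orbit tensor identity twice and the cancellation $(\Id+\varepsilon\Sh(p))(\bar{y})=y$. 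The orthogonality of tangential and normal directions, which holds because $\eta^P$ is $\mu$-transversely invariant and renders $\fol$ Riemannian, is what makes all cross-terms vanish and lets the identity close.
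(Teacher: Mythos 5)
Your proposal is correct and follows essentially the same route as the paper: both rest on \th\ref{T: h is horizontal lift of cheeger}, the explicit formula \eqref{E: Horizontal lifts of TGXP->P at 1(P)} for $h(p)$, the bilinear expansion of $\widehat{\eta}_\varepsilon = \frac{1}{\varepsilon}\eta^{(1)}+\eta^P$ on horizontal lifts, and the orbit-tensor identity $\eta^{(1)}(\Sh(p)(x),y)=\eta^P(x^\ast(p),y^\ast(p))$. The only difference is bookkeeping: the paper first establishes $\eta_\varepsilon\big(\Ch^{-1}_\varepsilon(p)(X),\Ch^{-1}_\varepsilon(p)(Y)\big)=\eta^P\big(X,\Ch^{-1}_\varepsilon(p)(Y)\big)$ and substitutes $X\mapsto\Ch_\varepsilon(p)(X)$, $Y\mapsto\Ch_\varepsilon(p)(Y)$ only at the very end, whereas you perform that substitution at the outset by lifting $X$ itself through $h(p)\circ\Ch_\varepsilon(p)$ and then cancel via $(\Id+\varepsilon\Sh(p))\circ(\Id+\varepsilon\Sh(p))^{-1}=\Id$.
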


\begin{proof}
By \th\ref{T: h is horizontal lift of cheeger}, writing $X=X^\ast(p)+X^\perp$ and $Y= Y^\ast(p)+Y^\perp$ for some $x,y\in \kernel D_{\1_{\alpha(p)}}s$, and using that \[
D_{(\1_{\alpha(p)},p)}\overline{t}\colon (\Hor(\1_{\alpha(p)},p),\widehat{\eta}_\varepsilon) \to (T_p P,\eta_\varepsilon)
\]
is an isometry, we have that 
\begin{linenomath}
\begin{align*}
\eta_\varepsilon(\Ch^{-1}_\varepsilon(p)(X),\Ch^{-1}_\varepsilon(p)(Y)) &= \eta_\varepsilon\Big(D_{(\1_{\alpha(p)},p)}\overline{t}\big(h(p)(X)\big),D_{(\1_{\alpha(p)},p)}\overline{t}\big(h(p)(Y)\big)\Big)\\
&= \widehat{\eta}_\varepsilon \big(h(p)(X),h(p)(Y)\big)\\
&= \left(\frac{1}{\varepsilon}\eta^{(1)}+\eta^{(P)}\right)\Big(\big(-\varepsilon\Sh(p)(x),X\big),\big(-\varepsilon\Sh(p)(y),Y\big)\Big)\\
& = \left(\frac{1}{\varepsilon}\eta^{(1)}\Big(\varepsilon\Sh(p)(x),\varepsilon\Sh(p)(y)\Big)\right)+\eta^P(X,Y)\\
& = \eta^{(1)}\Big(\Sh(p)(x),\varepsilon\Sh(p)(y)\Big)+\eta^P (X,Y)\\
&= \eta^P\Big(X,\varepsilon(\Sh(p)(y))^\ast(p)\Big)+\eta^P(X,Y)\\
& = \eta^P\Big(X,\varepsilon(\Sh(p)(y))^\ast(p)+Y^\ast(p)+Y^\perp\Big)\\
& = \eta^P\Big(X,(y+\varepsilon(\Sh(p)(y))^\ast(p)+Y^\perp\Big)\\
&= \eta^P\big(X,\Ch^{-1}_\varepsilon(p)(Y)\big).
\end{align*}
\end{linenomath}
Now taking $\tilde{X}= \Ch_\varepsilon(p)(X)$ and $\tilde{Y} = \Ch_\varepsilon(p)(Y)$ we obtain
\begin{linenomath}
\begin{align*}
\eta_\varepsilon(X,Y) &= \eta_\varepsilon(\Ch^{-1}_\varepsilon(p)(\tilde{X}),\Ch^{-1}_\varepsilon(p)(\tilde{Y}))\\
&= \eta^P(\tilde{X},\Ch^{-1}_\varepsilon(p)(\tilde{Y}))\\
& = \eta^P(\Ch_\varepsilon(p)(X),Y).
\end{align*}
\end{linenomath}
\end{proof}

With this we see that $\eta_\varepsilon$ is a deformation of our original $\mu$-transversely invariant metric $\eta^{P}$ on $P$,

\begin{thm}\th\label{T: Cheeger deformation collapses}
Consider $\G\rightrightarrows M$ a proper Lie groupoid acting on $\alpha\colon P\to M$ via $\mu$, and $\eta^{(1)}$ a $1$-metric on $\G$. Let $\eta^P$ be a $\mu$-transversely invariant Riemannian metric on $P$, and assume that for all $p\in M$ we have $\nu_p (L_p)\subset T_p \alpha^{-1}(\alpha(p))$. Then setting $\fol =\{L_p\mid p\in P\}$, we have that $(P,d_{\eta_{\varepsilon}})\to (P/\fol, d_{\eta^P}^\ast)$ in the Gromov-Hausdorff topology as $\varepsilon\to \infty$, and $\eta_{\varepsilon}\to \eta^P$ in the $C^{\infty}$-topology as $\varepsilon\to 0$. 
\end{thm}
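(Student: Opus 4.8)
The plan is to repackage the whole deformation into a single smooth endomorphism field of $TP$, which trivializes the $\varepsilon\to 0$ assertion and makes the $\varepsilon\to\infty$ collapse transparent. For $p\in P$ let $\sigma_p\colon \kernel D_{\1_{\alpha(p)}}s\to T_pP$ be the linear map $\sigma_p(x)=X^\ast(p)=-D_{(\1_{\alpha(p)},p)}\overline{t}(x,0)$, and let $\sigma_p^\ast$ be its adjoint with respect to $\eta^{(1)}$ and $\eta^P$. Then $\Sh(p)=\sigma_p^\ast\sigma_p$, and I would introduce the smooth field $S\in \Gamma(\End(TP))$ given by $S(p)=\sigma_p\sigma_p^\ast$, which is $\eta^P$-self-adjoint, positive semidefinite, and has image $T_pL_p$ and kernel $\nu_p(L_p)$. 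Using the elementary push-through identity $\sigma_p(\Id+\varepsilon\Sh(p))^{-1}=(\Id+\varepsilon S(p))^{-1}\sigma_p$ together with the description of $\Ch_\varepsilon$ and \th\ref{T: cheeger deformation controlled by cheeger tensor}, I would show
\[
\eta_\varepsilon(u,v)=\eta^P\big((\Id+\varepsilon S)^{-1}u,v\big)\qquad (u,v\in T_pP);
\]
note that this already incorporates the hypothesis $\nu_p(L_p)\subset T_p\alpha^{-1}(\alpha(p))$ through \th\ref{T: cheeger deformation controlled by cheeger tensor}.

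The $C^\infty$-convergence as $\varepsilon\to 0$ is then immediate. Since $S$ is positive semidefinite, $\Id+\varepsilon S$ is invertible for every $\varepsilon\ge 0$, so $(\varepsilon,p)\mapsto (\Id+\varepsilon S(p))^{-1}$ is a smooth $\End(TP)$-valued map on $[0,\infty)\times P$ equal to $\Id$ at $\varepsilon=0$. Hence $\eta_\varepsilon$ extends smoothly to $\varepsilon=0$ with $\eta_0=\eta^P$, and by compactness of $P$ this joint smoothness forces $\eta_\varepsilon\to\eta^P$ in every $C^k$-norm.

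For the Gromov--Hausdorff statement I would let $\pi\colon P\to P/\fol$ be the quotient projection and equip $P/\fol$ with $d_{\eta^P}^\ast([p],[q])=\dist_{\eta^P}(L_p,L_q)$, which is a genuine metric because $(P,\eta^P,\fol)$ is a closed singular Riemannian foliation. Two estimates are needed. First, since $(\Id+\varepsilon S)^{-1}$ is the identity on $\ker S=\nu(L)$ and preserves the splitting $T_pP=T_pL_p\oplus\nu_p(L_p)$, the metric $\eta_\varepsilon$ agrees with $\eta^P$ on horizontal vectors and keeps this splitting orthogonal; for any curve $c$ this yields $L_{d^\ast}(\pi\circ c)\le \int\|(c')^\perp\|_{\eta^P}=\int\|(c')^\perp\|_{\eta_\varepsilon}\le L_{\eta_\varepsilon}(c)$, so $\pi$ is $1$-Lipschitz, i.e.\ $d_{\eta^P}^\ast(\pi p,\pi q)\le d_{\eta_\varepsilon}(p,q)$. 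Second, I would bound the intrinsic $\eta_\varepsilon$-diameter of the leaves uniformly: since $\overline{t}\colon(\G\times_M P,\widehat\eta_\varepsilon)\to(P,\eta_\varepsilon)$ is a Riemannian submersion (\th\ref{T: contraction of groupoid induces cheeger deformation}) it is $1$-Lipschitz, and $L_p=\overline{t}(\overline{s}^{-1}(p))$ with $\widehat\eta_\varepsilon$ restricting to $\tfrac1\varepsilon\eta^{(1)}$ on $\overline{s}^{-1}(p)=s^{-1}(\alpha(p))\times\{p\}$; hence $\diam_{\eta_\varepsilon}(L_p)\le \tfrac{1}{\sqrt\varepsilon}\diam_{\eta^{(1)}}\big(s^{-1}(\alpha(p))\big)\le D/\sqrt\varepsilon$, where $D<\infty$ is the supremum of source-fibre diameters, finite by compactness of $\G$.

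Combining these gives the two-sided comparison. For the reverse inequality I would take a minimizing $\eta^P$-geodesic realizing $\dist_{\eta^P}(L_p,L_q)$ from $p'\in L_p$ to $q'\in L_q$; by transnormality it is horizontal, so its $\eta_\varepsilon$-length equals $d_{\eta^P}^\ast(\pi p,\pi q)$, and concatenating it with paths inside $L_p$ and $L_q$ gives $d_{\eta_\varepsilon}(p,q)\le d_{\eta^P}^\ast(\pi p,\pi q)+2D/\sqrt\varepsilon$. Thus $\pi$ is a surjective $(2D/\sqrt\varepsilon)$-isometry, and the standard criterion yields $d_{GH}\big((P,d_{\eta_\varepsilon}),(P/\fol,d_{\eta^P}^\ast)\big)\to 0$ as $\varepsilon\to\infty$. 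The main obstacle is precisely the \emph{uniform} collapse of the leaves: the naive pointwise bound $(1+\varepsilon\lambda_{\min}(p))^{-1/2}$ degenerates on lower-dimensional strata, where the smallest positive eigenvalue of $S(p)$ tends to $0$, which is exactly why I route the diameter estimate through the submersion $\overline{t}$ and the compactness of $\G$ rather than through the eigenvalues of $S$.
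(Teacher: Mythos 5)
Your proposal is correct and, on the collapse half, takes a genuinely different and sharper route than the paper. For $\varepsilon\to 0$ the two arguments coincide in substance: the paper notes that $\Ch^{-1}_\varepsilon(p)\to\Id$ and appeals to \th\ref{T: cheeger deformation controlled by cheeger tensor}, and your identity $\eta_\varepsilon(u,v)=\eta^P\big((\Id+\varepsilon S)^{-1}u,v\big)$ with $S(p)=\sigma_p\sigma_p^\ast$ is a clean global repackaging of that same theorem (the push-through identity is exactly the statement $\Ch_\varepsilon(p)=(\Id+\varepsilon S(p))^{-1}$). The divergence is in the $\varepsilon\to\infty$ statement. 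The paper argues pointwise: $\eta_\varepsilon$-lengths of orbit-tangent vectors tend to $0$, normal lengths are unchanged, leaf distances are independent of $\varepsilon$, and it concludes Gromov--Hausdorff convergence. As you correctly flag, the pointwise decay is \emph{not} uniform: since $S$ is continuous and its rank drops along lower-dimensional orbits, whenever the orbit dimension is non-constant one has $\sup\{\|v\|_{\eta_\varepsilon}\colon v\in T_pL_p,\ \|v\|_{\eta^P}=1,\ p\in P\}=1$ for every fixed $\varepsilon$, so passing from pointwise vector decay to the uniform collapse of orbit diameters (which is what Gromov--Hausdorff convergence actually requires) needs an argument the paper does not supply. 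Your detour through the Riemannian submersion $\overline{t}$ provides precisely this missing estimate, $\diam_{\eta_\varepsilon}(L_p)\le\varepsilon^{-1/2}\,\diam_{\eta^{(1)}}\big(s^{-1}(\alpha(p))\big)\le D\varepsilon^{-1/2}$ uniformly in $p$, which combined with the $1$-Lipschitz property of $\pi$ and the horizontality of minimizing geodesics between leaves exhibits $\pi$ as a surjective $O(\varepsilon^{-1/2})$-isometry; so your proof is quantitative (with an explicit rate) where the paper's is qualitative, and it closes a real gap. One caveat, which both proofs share but yours makes visible: your constant $D$ is finite only if the source fibres $s^{-1}(x)$ are connected, since compactness of $\G$ bounds extrinsic but not intrinsic fibre diameters; some such hypothesis is genuinely necessary, as for a nontrivial finite group (viewed as a groupoid over a point) the deformation is trivial and the stated convergence fails, so surfacing this assumption is a feature of your approach rather than a defect.
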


\begin{proof}
We have that as $\varepsilon\to \infty$, then the space $(\G\times_M P,\frac{1}{\varepsilon}\eta^{(1)}+\eta^P)$ converges in the Gromov-Hausdorff sense to $(P,\eta^P)$. In particular the length of  vectors in $\kernel_{\1_{\alpha(p)}} s\times \{0\}$ goes to  $0$ as $\varepsilon \to \infty$. But since this vectors span the tangent spaces of the ($\G\times_M P\rightrightarrows P)$-orbits in $P$, then we conclude that the length of the vectors tangent to the orbits with respect to $\eta_\varepsilon$ goes to $0$. Now observe that for a vector $\xi\in T_pP$ perpendicular to the orbit $L_p$, we have by \th\ref{L: target map of groupoid action is the identity on the normal part of the orbits} and \th\ref{T: cheeger deformation controlled by cheeger tensor} that
\[
\eta_\varepsilon(\xi,\xi) = \eta^{P}\Big(\Ch_\varepsilon(p)(\xi),\xi\Big) = \eta^{P}(\xi,\xi).
\]
This implies that the foliation $\fol = \{L_p\mid L_p \mbox{ is } (\G\times_M P\rightrightarrows P)-\mbox{orbit}\}$ is a singular Riemannian foliation with respect to $\eta_\varepsilon$. Thus the distance between the leaves of $\fol$ is constant with respect to $\varepsilon$. Thus we conclude that $(P,\eta^\varepsilon)$ converges in the Gromov-Hausdorff sense to $(P/\fol,d_{\eta^{P}}^\ast)$, where $d_{\eta^{P}}^\ast$ is the metric induced by the Riemannian distance $d_{\eta^{P}}$ on $P$.

Observe that the tensor $\Ch^{-1}_\varepsilon(p)$ converges in the $C^\infty$-topology to the identity as $\varepsilon\to 0$. Thus the second claim follows.
\end{proof}

\subsection{Curvature} In this section we compute the sectional curvature of the deformed Riemannian manifold $(P,\eta_\varepsilon)$. We can use  O'Neil's formula (\th\ref{T: ONeills formula}) to compute the sectional curvature of our new deformed metrics. 

\begin{thm}\th\label{T: Proof of theorem B}
Consider $\G\rightrightarrows M$ a proper Lie groupoid acting on $\alpha\colon P\to M$ a submersion, and $\eta^{(1)}$ a Riemannian metric on $\G$ making it a Riemannian Lie groupoid. Let $\eta^P$ a left-$\G$-invariant Riemannian metric on $P$, and assume that for all $p\in M$ we have $\nu_p(L_p)\subset T_p \alpha^{-1}(\alpha(p))$. Then for the Cheeger deformation $(P,\eta_\varepsilon)$ we have for $v,w\in T_pP$ linearly independent vectors, with $v = X^\ast(p)+v^\perp$ and $w = Y^\ast(p)+w^\perp$ for some $x,y\in \kernel D_{\1_{\alpha(p)}} s$. Then
\begin{linenomath}
\begin{align}\label{EQ: Full curvature description}
\begin{split}
K_{\eta_\varepsilon}\Big(\Ch^{-1}_\varepsilon(p)&(v),\Ch^{-1}_\varepsilon(p)(w)\Big)  = K_{\eta^P}(v,w)+\varepsilon^3K_{\eta^{(1)}}\Big(\Sh(p)(v),\Sh(p)(w)\Big)\\
+&3\Big\|A_{h(p)(v)}h(p)(w)\Big\|^2_{\left(\frac{1}{\varepsilon}\eta^{(1)}+\eta^P\right)}\\
+&\Big\|\II_\varepsilon\big((v,-\varepsilon \Sh(p)(v)),(w,-\varepsilon\Sh(p)(w))\big)\Big\|^2_{\frac{1}{\varepsilon}\eta^{(1)}+\eta^P}\\
 -&\left(\frac{1}{\varepsilon}\eta^{(1)}+\eta^P\right)\Big(\II_\varepsilon\big((v,-\varepsilon\Sh(p)(v)),(v,-\varepsilon\Sh(p)(v))\big),\\
 &\II_\varepsilon\big((w,-\varepsilon\Sh(p)(w)),(w,-\varepsilon\Sh(p)(w))\big)\Big).
\end{split}
\end{align}
\end{linenomath}
where $\II(\cdot,\cdot)$ is the second fundamental form of $(\G\times_M P,\widehat{\eta}_\varepsilon)\subset (\G\times P,(1/\varepsilon)\eta^{(1)}+\eta^{P})$, and $A_\cdot \cdot$ is the $A$-tensor of the Riemannian submersion $\bar{t}\colon (\G\times_M P,\hat{\eta}_\varepsilon)\to (P,\eta_t)$.
\end{thm}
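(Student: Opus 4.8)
The plan is to derive \eqref{EQ: Full curvature description} by chaining two classical curvature identities along the factorization
\[
(P,\eta_\varepsilon)\xleftarrow{\ \bar t\ }(\G\times_M P,\widehat\eta_\varepsilon)\hookrightarrow\Big(\G\times P,\tfrac1\varepsilon\eta^{(1)}+\eta^P\Big),
\]
where the first arrow is the Riemannian submersion of \th\ref{T: contraction of groupoid induces cheeger deformation} and the second is the isometric inclusion of the fibre product, an embedded submanifold since the submersion $s$ is transverse to $\alpha$ and $\widehat\eta_\varepsilon$ is by definition the restriction of the product metric. Each summand on the right of \eqref{EQ: Full curvature description} is produced by exactly one of three steps: O'Neill's formula for $\bar t$, the Gauss equation for the inclusion, and the splitting of the product curvature.

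First I would apply O'Neill. By \th\ref{T: h is horizontal lift of cheeger} the vectors $h(p)(v)$ and $h(p)(w)$ are the $\bar t$-horizontal lifts of $\Ch^{-1}_\varepsilon(p)(v)$ and $\Ch^{-1}_\varepsilon(p)(w)$, and they are linearly independent since $\Ch^{-1}_\varepsilon(p)$ is invertible. Feeding them into \th\ref{T: ONeills formula} gives
\[
K_{\eta_\varepsilon}\Big(\Ch^{-1}_\varepsilon(p)(v),\Ch^{-1}_\varepsilon(p)(w)\Big)=K_{\widehat\eta_\varepsilon}\big(h(p)(v),h(p)(w)\big)+3\big\|A_{h(p)(v)}h(p)(w)\big\|^2_{\widehat\eta_\varepsilon},
\]
which yields the $A$-tensor term verbatim and reduces everything to the sectional curvature of the total space on the plane spanned by the two lifts.

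Next I would view $(\G\times_M P,\widehat\eta_\varepsilon)$ as a submanifold of the product and invoke the Gauss equation. With $h(p)(v)=(-\varepsilon\Sh(p)(x),v)$ and $h(p)(w)=(-\varepsilon\Sh(p)(y),w)$ --- the vectors $\big(v,-\varepsilon\Sh(p)(v)\big)$, $\big(w,-\varepsilon\Sh(p)(w)\big)$ of \eqref{EQ: Full curvature description} up to the ordering of the factors --- the Gauss equation rewrites $K_{\widehat\eta_\varepsilon}(h(p)(v),h(p)(w))$ as the ambient sectional curvature corrected by exactly the two second-fundamental-form terms $+\|\II_\varepsilon(h(p)(v),h(p)(w))\|^2$ and $-\langle\II_\varepsilon(h(p)(v),h(p)(v)),\II_\varepsilon(h(p)(w),h(p)(w))\rangle$, measured in the ambient product metric. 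Since $\tfrac1\varepsilon\eta^{(1)}+\eta^P$ is a Riemannian product, its $(0,4)$-curvature is block diagonal and the mixed $\G$--$P$ blocks vanish, so, using $R_{\frac1\varepsilon g}=\tfrac1\varepsilon R_g$, the ambient curvature splits as
\[
\tfrac1\varepsilon R_{\eta^{(1)}}\big(-\varepsilon\Sh(p)(x),-\varepsilon\Sh(p)(y),-\varepsilon\Sh(p)(y),-\varepsilon\Sh(p)(x)\big)+R_{\eta^P}(v,w,w,v).
\]
The $P$-block is $K_{\eta^P}(v,w)$; for the $\G$-block the four-linearity in $-\varepsilon\Sh(p)(\cdot)$ contributes $\varepsilon^4$, and together with the prefactor $\tfrac1\varepsilon$ this produces $\varepsilon^3 K_{\eta^{(1)}}(\Sh(p)(v),\Sh(p)(w))$, which is well defined by \th\ref{L: Cheeger tensor does not depend of choice of x such that xast is X} (here $\Sh(p)(v)$ abbreviates $\Sh(p)(x)$ with $X^\ast(p)=v^\top$). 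Collecting the five contributions gives \eqref{EQ: Full curvature description}.

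The hard part will not be the geometry but the bookkeeping: the parameter $\varepsilon$ enters simultaneously through the rescaled factor $\tfrac1\varepsilon\eta^{(1)}$ and through the $\varepsilon$-weighted lift $h(p)$, and any mismatch would spoil the precise $\varepsilon^3$ weight that governs the collapse in \th\ref{MT: curvature of deformation}. One must also pin down the sign conventions so that O'Neill, Gauss, and the product splitting are mutually compatible with the curvature convention $K(\cdot,\cdot)=\langle R(\cdot,\cdot)\cdot,\cdot\rangle$ of \th\ref{T: ONeills formula}; in particular the orientation of $\II_\varepsilon$ must be chosen so that the two second-fundamental-form terms appear with the signs displayed in \eqref{EQ: Full curvature description}. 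These conventions, rather than any substantial geometric obstacle, are where the real care is required.
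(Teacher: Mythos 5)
Your route is the same as the paper's: apply O'Neill's formula (\th\ref{T: ONeills formula}) to the Riemannian submersion $\bar{t}\colon(\G\times_M P,\widehat{\eta}_\varepsilon)\to(P,\eta_\varepsilon)$ using the horizontal lifts $h(p)(v)$, $h(p)(w)$ provided by \th\ref{T: h is horizontal lift of cheeger}, then compute $K_{\widehat{\eta}_\varepsilon}\big(h(p)(v),h(p)(w)\big)$ via the Gauss equation for the inclusion $\G\times_M P\subset \G\times P$ together with the block splitting of the curvature of the product metric. (The paper folds the last two steps into a single application of the ``Gauss formula''.) Your $\varepsilon$-bookkeeping is correct, and in fact more explicit than the paper's: quadrilinearity in $-\varepsilon\Sh(p)(\cdot)$ contributes $\varepsilon^4$ on the $\G$-block, and the factor $\frac{1}{\varepsilon}$ in the metric brings this down to the stated weight $\varepsilon^3$.

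There is, however, one step that fails. You assert that the Gauss equation produces the corrections $+\big\|\II_\varepsilon\big(h(p)(v),h(p)(w)\big)\big\|^2-\big\langle\II_\varepsilon\big(h(p)(v),h(p)(v)\big),\II_\varepsilon\big(h(p)(w),h(p)(w)\big)\big\rangle$, and you propose to secure these signs by a suitable choice of ``orientation'' of $\II_\varepsilon$. That cannot work: both terms are quadratic in $\II_\varepsilon$, hence invariant under $\II_\varepsilon\mapsto-\II_\varepsilon$, so no convention for the second fundamental form affects them. With the curvature convention fixed by \th\ref{T: ONeills formula} (the one in which O'Neill's correction is $+3\|A\|^2$ and round spheres in Euclidean space have positive curvature), the Gauss equation for a submanifold reads
\[
K_{\widehat{\eta}_\varepsilon}(X,Y)=K_{\mathrm{amb}}(X,Y)+\big\langle\II_\varepsilon(X,X),\II_\varepsilon(Y,Y)\big\rangle-\big\|\II_\varepsilon(X,Y)\big\|^2,
\]
i.e.\ with the two signs opposite to the ones you state; a check against $\Sp^2\subset\R^3$ shows your version would assign the unit sphere curvature $-1$. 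This is in fact exactly what the paper's own proof writes when it applies the Gauss formula, so the signs displayed in \eqref{EQ: Full curvature description} appear to be a typo in the statement, which your blind write-up inherited and then tried to justify. The correct completion of your argument is to keep everything else as you have it, use the Gauss equation with the signs above, and note that the last two terms of \eqref{EQ: Full curvature description} should be amended accordingly.
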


\begin{proof}
Observe that since $v$ and $w$ are linearly independent, so are $\Ch^{-1}(p)(v)$ and $\Ch^{-1}(p)(w)$. Taking $\widehat{\eta}_\varepsilon = \frac{1}{\varepsilon}\eta^{(1)}+\eta^{P}$, from \th\ref{T: h is horizontal lift of cheeger}, and \th\ref{T: ONeills formula} we have that 
\begin{linenomath}
\begin{align*}
K_{\eta_\varepsilon}(\Ch^{-1}_\varepsilon(p)(v),\Ch^{-1}_\varepsilon(p)(w)) =& K_{\widehat{\eta}_\varepsilon}(h(p)(v),h(p)(w))\\ 
&+ 3\|A_{h(p)(v)} h(p)(w)\|^2_{\widehat{\eta}_\varepsilon}.
\end{align*}
\end{linenomath}
We now apply the Gauss-formula to compute that
\begin{align*}
K_{\widehat{\eta}_\varepsilon}\big(h(p)(v),h(p)(w)\big)=&K_{\widehat{\eta}_\varepsilon}\big((-\varepsilon \Sh(p)(v),v),(-\varepsilon \Sh(p)(w),w)\big)\\
=&\varepsilon^3 K_{\eta^{(1)}}\big(\Sh(p)(v),\Sh(p)(w)\big)+K_{\eta^{P}}(v,w)\\
&+\left(\frac{1}{\varepsilon}\eta^{(1)}+\eta^{P}\right)\bigg(\II_\varepsilon\big((-\varepsilon\Sh(p)(v),v\big),(-\varepsilon\Sh(p)(v),v)\big),\\
&\;\II_\varepsilon\big((-\varepsilon\Sh(p)(w),w),(-\varepsilon\Sh(p)(w),w)\big)\bigg)\\
&-\left(\frac{1}{\varepsilon}\eta^{(1)}+\eta^{P}\right)\bigg(\II_\varepsilon\big((-\varepsilon\Sh(p)(v),v\big),(-\varepsilon\Sh(p)(w),w)\big),\\
&\;\II_\varepsilon\big((-\varepsilon\Sh(p)(v),v),(-\varepsilon\Sh(p)(w),w)\big)\bigg)
\end{align*}
Combining these two expressions we obtain the desired result.
\end{proof}

\bibliographystyle{siam}
\bibliography{Bibliography.bib}

\begin{thebibliography}{10}

\bibitem{Alexandrino2010}
{\sc M.~M. Alexandrino}, {\em Desingularization of singular {R}iemannian
  foliation}, Geom. Dedicata, 149 (2010), pp.~397--416.

\bibitem{Alexandrino}
{\sc M.~M. Alexandrino and R.~G. Bettiol}, {\em Lie groups and geometric
  aspects of isometric actions}, Springer, Cham, 2015.

\bibitem{AlexandrinoInagakiStruchiner2018}
{\sc M.~M. Alexandrino, M.~K. Inagaki, and I.~Struchiner}, {\em Lie groupoids
  and semi-local models of singular riemannian foliations}, Ann. Glob. Anal.
  Geom., 61 (2022), pp.~593--619.

\bibitem{AlexandrinoRadeschi2016}
{\sc M.~M. Alexandrino and M.~Radeschi}, {\em Mean curvature flow of singular
  {R}iemannian foliations}, J. Geom. Anal., 26 (2016), pp.~2204--2220.

\bibitem{AndroulidakisSkandalis2009}
{\sc I.~Androulidakis and G.~Skandalis}, {\em The holonomy groupoid of a
  singular foliation}, J. Reine Angew. Math., 626 (2009), pp.~1--37.

\bibitem{CavenaghiESilvaSperanca2023}
{\sc L.~Cavenaghi, R.~e~Silva, and L.~Speran{\c{c}}a}, {\em Positive ricci
  curvature through cheeger deformations}, Collect. Math,  (2023).

\bibitem{Cheeger1973}
{\sc J.~Cheeger}, {\em Some examples of manifolds of nonnegative curvature}, J.
  Differential Geometry, 8 (1973), pp.~623--628.

\bibitem{CheegerEbin1975}
{\sc J.~Cheeger and D.~G. Ebin}, {\em Comparison theorems in {R}iemannian
  geometry}, North-Holland Mathematical Library, Vol. 9, North-Holland
  Publishing Co., Amsterdam-Oxford; American Elsevier Publishing Co., Inc., New
  York, 1975.

\bibitem{Corro}
{\sc D.~Corro}, {\em Manifolds with aspherical singular {R}iemannian
  foliations}, PhD thesis, Karlsruhe Institute of Technology, 2018.
\newblock \url{https://publikationen.bibliothek.kit.edu/1000085363}.

\bibitem{Corro2019}
\leavevmode\vrule height 2pt depth -1.6pt width 23pt, {\em A-foliations of
  codimension two on compact simply-connected manifolds}, Math. Z., 304 (2023),
  pp.~Paper No. 63, 32.

\bibitem{CorroMoreno2020}
{\sc D.~Corro and A.~Moreno}, {\em Core reduction for singular {R}iemannian
  foliations and applications to positive curvature}, Ann. Global Anal. Geom.,
  62 (2022), pp.~617--634.

\bibitem{Debord2001}
{\sc C.~Debord}, {\em Holonomy groupoids of singular foliations}, J.
  Differential Geom., 58 (2001), pp.~467--500.

\bibitem{delHoyoFernandes2018}
{\sc M.~del Hoyo and R.~L. Fernandes}, {\em Riemannian metrics on {L}ie
  groupoids}, J. Reine Angew. Math., 735 (2018), pp.~143--173.

\bibitem{delHoyo2013}
{\sc M.~L. del Hoyo}, {\em Lie groupoids and their orbispaces}, Portugal.
  Math., 70 (2013), pp.~161--209.

\bibitem{Ehresmann1965}
{\sc C.~Ehresmann}, {\em Cat\'egories et structures}, Dunod, Paris, 1965.

\bibitem{GalazGarcia2015}
{\sc F.~Galaz-Garcia and M.~Radeschi}, {\em Singular {R}iemannian foliations
  and applications to positive and non-negative curvature}, J. Topol., 8
  (2015), pp.~603--620.

\bibitem{GallegoGualdraniHectorReventos1989}
{\sc E.~Gallego, L.~Gualandri, G.~Hector, and A.~Revent\'{o}s}, {\em
  Groupo\"{\i}des riemanniens}, Publ. Mat., 33 (1989), pp.~417--422.

\bibitem{GarmendiaGonzales2019}
{\sc A.~G. Garmendia~Gonz\'{a}lez}, {\em Groupoids and singular foliations},
  PhD thesis, Katholieke Universiteit Leuven, 2019.

\bibitem{GeRadeschi2013}
{\sc J.~Ge and M.~Radeschi}, {\em Differentiable classification of 4-manifolds
  with singular {R}iemannian foliations}, Math. Ann., 363 (2015), pp.~525--548.

\bibitem{Glickenstein2008}
{\sc D.~Glickenstein}, {\em Riemannian groupoids and solitons for
  three-dimensional homogeneous {R}icci and cross-curvature flows}, Int. Math.
  Res. Not. IMRN,  (2008), pp.~Art. ID rnn034, 49.

\bibitem{GromollWalschap}
{\sc D.~Gromoll and G.~Walschap}, {\em Metric foliations and curvature},
  vol.~268 of Progress in Mathematics, Birkh\"auser Verlag, Basel, 2009.

\bibitem{GroveZiller2002}
{\sc K.~Grove and W.~Ziller}, {\em Cohomogeneity one manifolds with positive
  {R}icci curvature}, Invent. Math., 149 (2002), pp.~619--646.

\bibitem{LawsonYau1972}
{\sc H.~B. Lawson, Jr. and S.~T. Yau}, {\em Compact manifolds of nonpositive
  curvature}, J. Differential Geometry, 7 (1972), pp.~211--228.

\bibitem{Lee}
{\sc J.~M. Lee}, {\em Introduction to smooth manifolds}, vol.~218 of Graduate
  Texts in Mathematics, Springer, New York, second~ed., 2013.

\bibitem{Meinrenken2017}
{\sc E.~Meinrenken}, {\em Lie groupoids and {L}ie algebroids lecture notes,
  {F}all 2017}, 2017.
\newblock
  \url{https://www.math.toronto.edu/mein/teaching/MAT1341_LieGroupoids/Groupoids.pdf},
  Last accessed February 2023.

\bibitem{MendesRadeschi2015}
{\sc R.~A.~E. Mendes and M.~Radeschi}, {\em A slice theorem for singular
  {R}iemannian foliations, with applications}, Trans. Amer. Math. Soc., 371
  (2019), pp.~4931--4949.

\bibitem{Moerdijk}
{\sc I.~Moerdijk and J.~Mr\v{c}un}, {\em Introduction to foliations and {L}ie
  groupoids}, vol.~91 of Cambridge Studies in Advanced Mathematics, Cambridge
  University Press, Cambridge, 2003.

\bibitem{Molino}
{\sc P.~Molino}, {\em Riemannian foliations}, vol.~73 of Progress in
  Mathematics, Birkh\"auser Boston, Inc., Boston, MA, 1988.

\bibitem{Moreno2019}
{\sc A.~Moreno}, {\em Point leaf maximal singular {R}iemannian foliations in
  positive curvature}, Differential Geom. Appl., 66 (2019), pp.~181--195.

\bibitem{MoullieWebpage}
{\sc L.~Moulli\'{e}}, {\em Cajun geometer-cheeger deformations}, 2017.
\newblock Last accessed 14 June 2023
  \url{https://lawrencemouille.wordpress.com/2017/02/27/cheeger-deformations/}.

\bibitem{Mueter}
{\sc M.~M\"{u}ter}, {\em Kr\"{u}mmungseh\"{o}hende {D}eformationen mittels
  {G}ruppenaktionen}, PhD thesis, Westf\"{a}lischen
  {W}ilhelms-{U}niversit\"{a}t {M}\"{u}nster, 1987.

\bibitem{ONeill1966}
{\sc B.~O'Neill}, {\em The fundamental equations of a submersion}, Michigan
  Math. J., 13 (1966), pp.~459--469.

\bibitem{Radeschi2014}
{\sc M.~Radeschi}, {\em Clifford algebras and new singular {R}iemannian
  foliations in spheres}, Geom. Funct. Anal., 24 (2014), pp.~1660--1682.

\bibitem{Radeschi-notes}
\leavevmode\vrule height 2pt depth -1.6pt width 23pt, {\em {Lecture notes on
  singular {R}iemannian foliations}}, 2017.
\newblock URL:
  \url{https://static1.squarespace.com/static/5994498937c5815907f7eb12/t/5998477717bffc656afd46e0/1503151996268/SRF+Lecture+Notes.pdf}.
  Last visited on May 2022.

\bibitem{Searle2023}
{\sc C.~Searle}, {\em Symmetries of spaces with lower curvature bounds},
  Notices Amer. Math. Soc., 70 (2023), pp.~564--575.

\bibitem{SearleSolorzanoWilhelm2015}
{\sc C.~Searle, P.~Sol\'{o}rzano, and F.~Wilhelm}, {\em Regularization via
  {C}heeger deformations}, Ann. Global Anal. Geom., 48 (2015), pp.~295--303.

\bibitem{SearleWilhelm2015}
{\sc C.~Searle and F.~Wilhelm}, {\em How to lift positive {R}icci curvature},
  Geom. Topol., 19 (2015), pp.~1409--1475.

\bibitem{Wang2018}
{\sc K.~J.~L. Wang}, {\em Proper Lie groupoids and their orbit spaces}, PhD
  thesis, Universiteit van Amsterdam, 2018.

\bibitem{Weber2009}
{\sc B.~Weber}, {\em Lecture notes ``{C}ollapsing and {F}-{S}tructures'' --
  lecture 3 ``the basic examples of collapse''}, 2009.
\newblock
  \url{https://www2.math.upenn.edu/~brweber/Lectures/USTC2011/Lecture%204%20-%20Examples%20of%20Collapse.pdf},
  consulted January 2023.

\bibitem{Winkelnkemper1983}
{\sc H.~E. Winkelnkemper}, {\em The graph of a foliation}, Ann. Global Anal.
  Geom., 1 (1983), pp.~51--75.

\bibitem{Yamabe1950}
{\sc H.~Yamabe}, {\em On an arcwise connected subgroup of a {L}ie group}, Osaka
  Math. J., 2 (1950), pp.~13--14.

\end{thebibliography}
\end{document}